\newtheorem{thm}{Theorem}[section]
\newtheorem{prop}[thm]{Proposition}
\newtheorem{cor}[thm]{Corollary}
\newtheorem{lem}[thm]{Lemma}
\newtheorem*{claim1}{Claim 1}
\newtheorem*{claim2}{Claim 2}
\newtheorem*{conjA}{Conjecture A}
\newtheorem*{conjB}{Conjecture B}
\newtheorem{assump}{}
\newtheorem{conv}{Convention}
\theoremstyle{definition}
\newtheorem{defn}[thm]{Definition}
\theoremstyle{remark}
\newtheorem{rem}[thm]{Remark}
\newtheorem{example}[thm]{Example}
\newtheorem*{ack}{Acknowledgment}
\newcommand{\G }{\mathscr{G} (G, X\cup \mathcal H)}
\newcommand{\K }{\mathscr{G} (\Gamma, Y\cup \mathcal K)}
\newcommand{\Ghat }{\mathscr{G} (G, \hat X \cup \mathcal H)}
\newcommand{\Khat }{\mathscr{G} (\Gamma, Y\cup \mathcal H_\Gamma)}
\newcommand{\Gphat }{\mathscr{G} (G, \hat X \cup \mathcal P)}
\newcommand{\Gp }{\mathscr{G} (G, X\cup \mathcal P)}
\newcommand{\Ga }{\mathscr{G} (G, \mathcal A)}
\newcommand{\Gx }{\mathscr{G} (G, X)}
\newcommand{\GH }{(G, \mathbb H)}
\newcommand{\KK }{(\Gamma, \mathbb K)}
\newcommand{\GP }{(G, \mathbb P)}
\newcommand{\dxh }{d_{X\cup\mathcal H}}
\newcommand{\dxp }{d_{X\cup\mathcal P}}
\newcommand{\dxo }{d_{X\cup{\Omega}}}
\newcommand{\da }{d_{\mathcal A}}
\newcommand{\Hl }{\{ H_i \}_{i \in I} }
\newcommand{\Kl }{\{ K_j \}_{j \in J} }
\newcommand{\Pl }{\{ P_j \}_{j \in J} }
\newcommand{\Gf}{\overline{G}_f}
\newcommand{\pGf}{\partial_f{G}}
\newcommand{\TGH}{T_{\mathbb H}}
\newcommand{\TGP}{T_{\mathbb P}}
\newcommand{\LH}{\Lambda_{\mathbb H}}
\newcommand{\LP}{\Lambda_{\mathbb P}}
\newcommand{\LF}{\Lambda_{f}}
\newcommand{\lab }{\textbf{Lab}}
\newcommand{\len }{\ell}
\newcommand{\norm}[2]{\lvert #1 \rvert_{#2}}
\begin{document}

\title{Peripheral structures of relatively hyperbolic groups}

\author{Wen-yuan Yang}

\address{College of Mathematics and Econometrics, Hunan
University, Changsha, Hunan 410082 People's Republic of China}
\curraddr{U.F.R. de Mathematiques, Universite de Lille 1, 59655
Villeneuve D'Ascq Cedex, France} \email{wyang@math.univ-lille1.fr}
\thanks{The author is supported by the China-funded Postgraduates
Studying Aboard Program for Building Top University. This research
was supported  by National Natural Science Foundational of China
(No. 11071059)}


\subjclass[2000]{Primary 20F65, 20F67}

\date{June 17, 2011}

\dedicatory{}

\keywords{relatively hyperbolic groups, peripheral structures, Floyd
boundary, relative quasiconvexity, dynamical quasiconvexity}

\begin{abstract}
In this paper, we introduce and characterize a class of
parabolically extended structures for relatively hyperbolic groups.
A characterization of relative quasiconvexity with respect to
parabolically extended structures is obtained using dynamical
methods. Some applications are discussed. The class of groups acting
geometrically finitely on Floyd boundaries turns out to be easily
understood. However, we also show that Dunwoody's inaccessible group
does not act geometrically finitely on its Floyd boundary.
\end{abstract}

\maketitle

\section{Introduction}
In this paper, we study peripheral structures of relatively
hyperbolic groups. Since introduced by Gromov \cite{Gro}, relative
hyperbolicity has several equivalent formulations. If relatively
hyperbolic groups are understood as geometrically finite actions(see
Bowditch \cite{Bow1}), peripheral structures can be  a set
of representatives of the conjugacy classes of maximal parabolic
subgroups. On the other hand, following approaches of Farb
\cite{Farb} and Osin \cite{Osin}, a peripheral structure is a
preferred collection of subgroups such that the constructed relative
Cayley graph satisfies some nice properties. In practice a given
group may be hyperbolic relative to different peripheral structures.
So it is interesting to understand the relationship between possible
peripheral structures that can be endowed on a countable group.

The study of peripheral structures actually stems from the study of hyperbolic
groups. A first result of this sort is due to Gersten \cite{Ger} and
Bowditch \cite{Bow1}, who proved that malnormal quasiconvex
subgroups of hyperbolic groups yield peripheral structures. In a
point of view of relative hyperbolicity, an ordinary hyperbolic
group is hyperbolic relative to a trivial subgroup. Later on, in
relatively hyperbolic groups, Osin \cite{Osin2} generalized their
results and proposed the notion of hyperbolically embedded
subgroups. A hyperbolically embedded subgroup can be added into the
existing peripheral structure such that the group is hyperbolic relative to the enlarged peripheral structure. In
the present paper, we shall enlarge peripheral subgroups themselves to
get new peripheral structures.

In the remainder of the paper, the term ``peripheral structure''
will be used in a weaker sense, i.e. it is just a finite collection of
subgroups without involving relative hyperbolicity.

Let $G$ be a countable group with a collection of subgroups $\mathbb{H}=\Hl$. We denote such a pair by $\GH$. The collection
$\mathbb H$ is often referred as a \textit{peripheral structure} of
$G$, and each element of $\mathbb H$ a \textit{peripheral subgroup}
of $G$. In what follows, we are interested in peripheral structures of finite cardinality.

Let $\mathbb H = \Hl$ and $\mathbb P = \Pl$ be two peripheral
structures of a countable group $G$. If for each $i \in I$, there
exists $j \in J$ such that $H_i \subset P_j$, then we say $\mathbb P$ is an \textit{extended peripheral structure} for the pair $\GH$.
Moreover, if the pairs $\GH$ and $\GP$ are both relatively hyperbolic, then we say
$\mathbb P$ is \textit{parabolically extended} for $\GH$. Each
subgroup $P \in \mathbb P$ is said to be \textit{parabolically embedded} into $\GH$.

Our first result is to give a characterization
of parabolically extended peripheral structure. The notation
$\Gamma^g$ denotes the conjugate $g \Gamma g^{-1}$ of a subgroup
$\Gamma \subset G$ by an element $g \in G$. Recall that a subgroup
$\Gamma \subset G$ is \textit{weakly malnormal} if $\Gamma \cap
\Gamma^g$ is finite for any $g \in G \setminus \Gamma$.

\begin{thm} \label{mainthm}
Suppose $\GH$ is relatively hyperbolic and $\mathbb P$ is an
extended peripheral structure for $\GH$. Then $(G,\mathbb P)$ is
relatively hyperbolic if and only if each $P \in \mathbb P$
satisfies the following statements

(P1). $P$ is relatively quasiconvex with respect to $\mathbb H$,

(P2). $P$ is weakly malnormal,  and

(P3). $P^g \cap P'$ is finite for any $g\in G$ and distinct $P , P' \in
\mathbb P$.
\end{thm}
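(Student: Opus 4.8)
The plan is to prove the theorem through the dynamical characterization of relative hyperbolicity: $\GH$ being relatively hyperbolic means (after Bowditch) that $G$ admits a geometrically finite convergence action on a compact metrizable space $M$, the boundary $\partial\GH$, whose bounded parabolic points have stabilizers the conjugates of the $H_i$. I would establish that $\GP$ is relatively hyperbolic by producing an analogous boundary $M'$ on which $G$ acts geometrically finitely, with maximal parabolic subgroups the conjugates of the $P_j$. The two directions use this picture asymmetrically: necessity reads the three conditions off a given geometrically finite action for $\mathbb P$, while sufficiency constructs such an action by collapsing limit sets inside $M$.

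For necessity, assume $\GP$ is relatively hyperbolic. Each $P\in\mathbb P$ is then a maximal parabolic subgroup for $\GP$, fixing a bounded parabolic point $p\in\partial\GP$. Property (P2) follows because for $g\notin P$ the stabilizers of the distinct points $p$ and $gp$ intersect in a finite group by the convergence property; property (P3) follows identically, distinct $P,P'\in\mathbb P$ fixing points in distinct $G$-orbits, so that $\mathrm{Stab}(gp)\cap\mathrm{Stab}(p')$ is again the stabilizer of a pair of distinct points, hence finite. For (P1) I would invoke transitivity of relative quasiconvexity: since $\GH$ refines $\GP$ (each $H_i$ lies in some $P_j$) and both are relatively hyperbolic, the peripheral subgroups of $\GP$ are relatively quasiconvex with respect to the finer structure $\mathbb H$. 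Concretely this is a comparison of the two relative Cayley graphs $\Gp$ and $\G$, in which $\Gp$ is quasi-isometric to the graph obtained from $\G$ by coning off the cosets of the $P_j$.

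For sufficiency, assume (P1)--(P3). Using (P1), each $P\in\mathbb P$ is relatively quasiconvex in $\GH$ and so has a well-defined closed limit set $\Lambda P\subset M$; moreover $P$ is itself relatively hyperbolic and acts geometrically finitely on $\Lambda P$. I would first show that the $G$-invariant family $\mathcal L=\{\,g\Lambda P : g\in G,\ P\in\mathbb P\,\}$ is pairwise disjoint. This is where (P2) and (P3) enter: the intersection of the limit sets of two relatively quasiconvex subgroups equals the limit set of their intersection, so $\Lambda(P^g)\cap\Lambda(P')=\Lambda(P^g\cap P')$, and finiteness of $P^g\cap P'$ (from (P2) when $P=P'$ and $g\notin P$, from (P3) when $P\neq P'$) forces this limit set to be empty. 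I would then collapse each member of $\mathcal L$ to a point, forming the quotient $M'=M/\mathcal L$, and verify that the induced $G$-action on $M'$ is a geometrically finite convergence action whose parabolic points are exactly the collapsed classes, stabilized by the conjugates of the $P_j$. The conclusion then follows from the dynamical characterization of relative hyperbolicity.

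The main obstacle is the collapsing step: showing that $M'$ is Hausdorff, hence compact metrizable, and that the $G$-action on it remains a geometrically finite convergence action. The disjointness coming from (P2)--(P3) is necessary but not sufficient; I expect to need that the family $\mathcal L$ is \emph{null}, i.e. for a fixed metric on $M$ only finitely many of its members have diameter exceeding a given $\epsilon$, and this is where the geometric finiteness of the original action and the weak malnormality conditions must be combined to control the accumulation of limit sets. Granting nullness, the quotient is metrizable and standard convergence-group arguments show that points outside $\bigcup\mathcal L$ that were conical remain conical, while each collapsed class becomes a bounded parabolic point; verifying the bounded-parabolic condition, namely cocompactness of the $P$-action on the complement of the collapsed point transported to $M'$, is the most delicate remaining check.
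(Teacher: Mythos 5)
Your overall strategy is genuinely different from the paper's: you propose to build the Bowditch boundary of $\GP$ dynamically, by collapsing the $G$-translates of the limit sets $\Lambda P$ inside the Bowditch boundary of $\GH$, whereas the paper works combinatorially with Farb's definition, deducing the theorem from Theorem \ref{charpi} by lifting quasigeodesics from the coned-off Cayley graph $\Gp$ back to $\G$ (Proposition \ref{liftqc}) and verifying hyperbolicity and BCP directly; the technical heart there is Lemma \ref{Omega2}, a linear bound on isolated $\Gamma$-components of cycles proved with Olshanskii's polygon lemma. The difficulty with your route is that in the sufficiency direction essentially all of the work is deferred. You correctly list what must be shown --- that the family $\mathcal L$ is null, that the quotient $M'$ is compact metrizable, that the induced action is a convergence action, that conical points stay conical, and that each collapsed class becomes a bounded parabolic point --- but none of these is established, and each is exactly where hypotheses (P1)--(P3) have to be converted into geometry. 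Nullness and bounded parabolicity of the collapsed points are not ``standard convergence-group arguments'' one can quote; they are the substance of the theorem. As written, the sufficiency direction is a plan rather than a proof.

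Two further specific problems. First, your disjointness argument rests on the identity $\Lambda(A)\cap\Lambda(B)=\Lambda(A\cap B)$ for relatively quasiconvex subgroups, which fails at parabolic points: two infinite subgroups $A,B$ of a maximal parabolic $G_p$ with $A\cap B=\{1\}$ satisfy $\Lambda(A)=\Lambda(B)=\{p\}$ while $\Lambda(A\cap B)=\emptyset$. The disjointness of the family $\{g\Lambda P\}$ is still true, but at a shared parabolic point $p$ one must argue via the peripheral structure: the stabilizer $H_i^h$ of $p$ lies in a unique conjugate of a unique $P_j$, and (P2)--(P3) then force the two cosets to coincide. Second, in the necessity direction you dispose of (P1) by invoking ``transitivity of relative quasiconvexity,'' but no such general principle exists to be cited; the statement that a peripheral subgroup of $\GP$ is relatively quasiconvex with respect to the finer structure $\mathbb H$ is precisely the paper's Proposition \ref{PIISQC}, whose proof first shows that $P_j$ is generated by a finite set relative to $\mathbb H_{P_j}$ (Lemma \ref{rfg}) and then that geodesics of $\G$ with endpoints in $P_j$ fellow-travel paths in the relative Cayley graph of $P_j$, using the isoperimetric control of Lemma \ref{Omega}. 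The parts you do obtain correctly and cheaply are (P2) and (P3), read off the convergence action exactly as the paper reads them off Lemma \ref{periphintersect}.
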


In fact, Theorem \ref{mainthm} follows from a characterization of
parabolically embedded subgroups (see Theorem \ref{charpi}).

\begin{rem}
In our terms, a hyperbolically embedded subgroup $\Gamma \subset G$
in the sense of Osin \cite{Osin2} is parabolically embedded into $\GH$. In this case,
$\Gamma$ turns out to be a hyperbolic group.  However, parabolically embedded
subgroups may be in general hyperbolic relative to a nontrivial
collection of proper subgroups, as stated in Condition (P1).
\end{rem}

In relatively hyperbolic groups, we can define a natural class of
subgroups named relatively quasiconvex subgroups. We observe that a
subgroup relatively quasiconvex with respect to one peripheral
structure is not necessarily relatively quasiconvex with respect to others.
Our second result is to give a characterization of relative
quasiconvexity with respect to parabolically extended structures.

\begin{thm} \label{mainthm2}
Suppose $\GH$ is relatively hyperbolic and $\mathbb P$ is a
parabolically extended structure for $\GH$. If $\Gamma \subset G$ is
relatively quasiconvex with respect to $\mathbb H$, then $\Gamma$ is
relatively quasiconvex with respect to $\mathbb P$.

Conversely, suppose $\Gamma \subset G$ is relatively quasiconvex with
respect to $\mathbb P$. Then $\Gamma$ is relatively quasiconvex with
respect to $\mathbb H$ if and only if $\Gamma \cap P^g$ is
relatively quasiconvex with respect to $\mathbb H$ for any $g\in G$
and $P \in \mathbb P$.
\end{thm}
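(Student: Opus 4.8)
The plan is to prove Theorem \ref{mainthm2} in its two natural directions, relying throughout on a dynamical (convergence-action) picture of relative hyperbolicity, since the paper's abstract and keywords emphasize that relative quasiconvexity has a dynamical characterization (``dynamical quasiconvexity''). The key technical fact I would invoke is the equivalence between relative quasiconvexity of a subgroup $\Gamma$ and dynamical quasiconvexity of its action on the Bowditch boundary: $\Gamma$ is relatively quasiconvex with respect to $\mathbb{H}$ iff the limit set $\Lambda\Gamma$ carries a geometrically-finite-like action, equivalently iff the collection of $\Gamma$-translates of $\Lambda\Gamma$ forms a dynamically quasiconvex family in the boundary $\partial(G,\mathbb{H})$. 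Because $\mathbb{P}$ is parabolically extended, the identity map on $G$ induces a $G$-equivariant continuous surjection $\partial(G,\mathbb{H}) \to \partial(G,\mathbb{P})$ that collapses exactly the limit sets $\Lambda P$ of the enlarged parabolic subgroups $P \in \mathbb{P}$ to the parabolic points of the coarser boundary. This collapsing map is the engine of the whole argument.

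For the forward (first) direction, I would start from a $\Gamma$ that is relatively quasiconvex with respect to $\mathbb{H}$ and push its limit set forward under the collapsing map $\partial(G,\mathbb{H}) \to \partial(G,\mathbb{P})$. Since $\Gamma$ acts as a geometrically finite (dynamically quasiconvex) subgroup on the finer boundary, and the collapse is $G$-equivariant and continuous, the image $\Lambda\Gamma$ in $\partial(G,\mathbb{P})$ together with its $\Gamma$-translates should again form a dynamically quasiconvex family; the conical and bounded-parabolic conditions are preserved because collapsing the $\Lambda P$'s only identifies points lying in the (finitely many orbits of) parabolic fixed sets, which is precisely the data $\mathbb{P}$ controls. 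This direction is the easier one and requires only that quotienting the boundary by a $G$-invariant closed equivalence relation respecting the peripheral data preserves dynamical quasiconvexity.

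The converse is the substantive part. Here $\Gamma$ is relatively quasiconvex with respect to $\mathbb{P}$, and I want to deduce relative quasiconvexity with respect to $\mathbb{H}$ under the stated local condition that $\Gamma \cap P^g$ is relatively quasiconvex with respect to $\mathbb{H}$ for every $g \in G$ and $P \in \mathbb{P}$. The plan is to lift the dynamical quasiconvexity of $\Gamma$ from $\partial(G,\mathbb{P})$ back up to $\partial(G,\mathbb{H})$ along the collapsing map. The only obstruction to lifting is exactly over the collapsed sets: a parabolic point of $\partial(G,\mathbb{P})$ fixed by some $P^g$ pulls back to the whole limit set $\Lambda(P^g)$ in $\partial(G,\mathbb{H})$, and the intersection of $\Lambda\Gamma$ with that fibre is governed by the subgroup $\Gamma \cap P^g$. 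The hypothesis that each such intersection is itself relatively quasiconvex with respect to $\mathbb{H}$ is precisely what guarantees that the preimage of $\Lambda\Gamma$ meets each collapsed fibre in a dynamically quasiconvex (hence ``well-behaved at infinity'') piece, so that the lifted family remains dynamically quasiconvex. The necessity of the condition is immediate from the first direction applied inside $P$: if $\Gamma$ were relatively quasiconvex with respect to $\mathbb{H}$, then so is $\Gamma \cap P^g$, since intersections of relatively quasiconvex subgroups with parabolic subgroups are relatively quasiconvex.

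\textbf{The hard part} will be the sufficiency in the converse: controlling the geometry of $\Lambda\Gamma$ over each collapsed fibre and verifying that the bounded-parabolic and conical-limit conditions for $\Gamma$ in $\partial(G,\mathbb{H})$ follow from the corresponding conditions in $\partial(G,\mathbb{P})$ \emph{together with} the quasiconvexity of the $\Gamma \cap P^g$. Concretely, I expect the delicate point to be showing that a conical limit point of $\Gamma$ in the coarse boundary either remains conical upstairs or else its fibre contributes only through a bounded-parabolic point controlled by $\Gamma \cap P^g$; ruling out ``escaping'' behaviour along the collapsed directions is where the local quasiconvexity hypothesis must be used in full strength, and where a careful analysis of coset representatives of $\Gamma \cap P^g$ inside $P^g$ — using Theorem \ref{charpi} and conditions (P1)--(P3) to understand how $P$ sits parabolically inside $(G,\mathbb{H})$ — will be needed.
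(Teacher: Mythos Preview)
Your overall strategy matches the paper's exactly: both directions are proved dynamically via the $G$-equivariant surjection $\varphi:\TGH\to\TGP$ (Lemma~\ref{equivarant}), with the forward direction pushing dynamical quasiconvexity forward (Lemma~\ref{quotient} plus Theorem~\ref{GFQC}), and the converse direction lifting it back by controlling what happens over the collapsed fibres $\varphi^{-1}(p)=\LH(G_p)$ (Lemma~\ref{kernel}). The necessity half of the converse is also handled as you say, via intersection of relatively quasiconvex subgroups.

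Where you diverge from the paper is in the execution of the sufficiency half of the converse. You anticipate verifying the conical/bounded-parabolic dichotomy point-by-point and invoke Theorem~\ref{charpi} and conditions (P1)--(P3); the paper does neither. Instead it uses Gerasimov's \emph{2-cocompactness} characterization (Theorem~\ref{GFRH}(3)) together with the shortcut metrics $d_{\mathbb H}$, $d_{\mathbb P}$ on the two boundaries: since $\varphi$ is distance-decreasing for these metrics, one builds a single compact fundamental domain $L\subset\Theta^2(\LH(\Gamma))$ by taking the minimum of the $\epsilon$'s coming from 2-cocompactness of $\Gamma\curvearrowright\LP(\Gamma)$ and of each $Q_i=\Gamma\cap P^g\curvearrowright\LH(Q_i)$, and then a two-case argument (pairs with $\varphi(p)\neq\varphi(q)$ versus $\varphi(p)=\varphi(q)$) moves any pair into $L$. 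This is considerably cleaner than tracking conical versus parabolic points, and Theorem~\ref{charpi} plays no role here. Your approach is not wrong in spirit, but you have misidentified the key tool for the hard step.
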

\begin{rem}
Theorem \ref{mainthm2} generalizes the main result of E.
Martinez-Pedroza \cite{MarPed}, where $\mathbb P$ is obtained from $\mathbb H$ by adjoining hyperbolically embedded subgroups.
\end{rem}

Unlike the proof of Theorem \ref{mainthm}, we give a dynamical proof
of Theorem \ref{mainthm2} using the work of Gerasimov \cite{Ge2} and
Gerasimov-Potyagailo \cite{GePo2} on Floyd maps.

Using Floyd maps, some preliminary observations are made to
general peripheral structures of relatively hyperbolic groups. In this study, it appears worth to distinguish relatively hyperbolic groups which admit geometrically finite actions on their Floyd boundaries and the ones which do not.  For instance, peripheral structures of groups admitting geometrically finite actions on Floyd boundaries are much simpler and shown to
be parabolically extended with respect to canonical ones. See
Corollary \ref{structure}.

In fact, it is known that many relatively hyperbolic groups act geometrically
finitely on their Floyd boundaries:

\begin{enumerate} \label{GFExmples}
\item[(1)] Geometrically finite Kleinian groups where maximal parabolic subgroups are
virtually abelian.
\item[(2)] Hyperbolic groups relative to a collection of
unconstricted subgroups. According to \cite{DruSapir}, a group is
unconstricted if one of its asymptotic cones has no cut points. By
Proposition 4.28 of \cite{OOS}, the Floyd boundary of an
unconstricted subgroup is trivial, i.e. consisting of less then 2 points.
\item[(3)] Most known hyperbolic groups relative to a collection of
non-relatively hyperbolic subgroups. Recall that an \textit{Non-Relatively Hyperbolic}
(NRH) group is not hyperbolic relative to any collection of proper
subgroups. For example, all NRH subgroups in \cite{AAS} have
trivial Floyd boundaries.
\end{enumerate}

However, there do exist relatively hyperbolic groups which do
not act geometrically finitely on their Floyd boundaries. See
Proposition \ref{Dunwoody} for the example Dunwoody's inaccessible
group which is constructed by Dunwoody in \cite{Dun}.

Moreover, one of our results shows that for a relatively
hyperbolic group, its convergence action on Floyd boundary is
largely determined by ones of peripheral subgroups.

\begin{thm} \label{mainthm4}
Suppose $\GH$ is relatively hyperbolic. Then $G$ acts geometrically
finitely on its Floyd boundary $\pGf$ if and only if each $H \in \mathbb H$
acts geometrically finitely on its limit set for the action on $\pGf$.
\end{thm}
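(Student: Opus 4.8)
The plan is to translate geometric finiteness into Bowditch's dynamical dichotomy and to run the comparison through the Floyd map. Recall that $G$ acts on its Floyd boundary $\pGf$ as a convergence group, and that by Bowditch \cite{Bow1} a convergence action is geometrically finite precisely when every limit point is either conical or bounded parabolic. By Gerasimov \cite{Ge2} and Gerasimov--Potyagailo \cite{GePo2} there is a continuous, $G$-equivariant surjection (the Floyd map) $F \colon \pGf \to \partial(G,\mathbb H)$ onto the Bowditch boundary of $\GH$, whose fiber over a conical point is a single point and whose fiber over a parabolic point $p$ with $\mathrm{Stab}_G(p) = H$ is exactly the limit set $\Lambda_H \subset \pGf$ of $H$. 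Thus $\pGf$ decomposes into the preimages of conical points together with the peripheral limit sets $\Lambda_{H^g}$, and the right-hand condition of the theorem concerns precisely the latter pieces. First I would record the \emph{confinement} observation: if $\xi \in \Lambda_H = F^{-1}(p)$ and $g\xi = \xi$, then applying $F$ gives $gp = p$, so $g \in H$; hence $\mathrm{Stab}_G(\xi) = \mathrm{Stab}_H(\xi)$ for every $\xi \in \Lambda_H$.

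Next I would dispose of the conical fibers. If $c \in \partial(G,\mathbb H)$ is conical and $\tilde c = F^{-1}(c)$ is its unique preimage, then using that $G$ acts on $\pGf$ as a convergence group I extract from any sequence realizing the conical dynamics at $c$ a subsequence with attracting/repelling pair $(a,b)$ in $\pGf$; equivariance and continuity of $F$ force $F(b) = c$, so $b = \tilde c$, showing that $\tilde c$ is conical for the $G$-action on $\pGf$. These points lie in no $\Lambda_{H^g}$ and so are irrelevant to the peripheral condition; they are always of the allowed type.

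The heart of the argument is a \textbf{transfer lemma} for a point $\xi$ of a peripheral fiber $\Lambda_H$: $\xi$ is conical (respectively bounded parabolic) for the $G$-action on $\pGf$ if and only if it is conical (respectively bounded parabolic) for the $H$-action on $\Lambda_H$. Two of the four implications are soft. Conicality for $H$ implies conicality for $G$ because a witnessing sequence in $H \subset G$ still witnesses it in the larger group. If $\xi$ is bounded parabolic for $G$, then by confinement $\mathrm{Stab}_G(\xi) = \mathrm{Stab}_H(\xi)$, and the cocompact action of this group on $\pGf \setminus \{\xi\}$ restricts to a cocompact action on the closed invariant subset $\Lambda_H \setminus \{\xi\}$, giving bounded parabolicity for $H$. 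I expect the \textbf{main obstacle} to be the two converse implications. For the passage from conicality for $G$ to conicality for $H$: a $G$-sequence witnessing conicality of $\xi \in \Lambda_H$ collapses, after applying $F$, with attractor and repeller in a common fiber over some parabolic point; the task is to manufacture from it a sequence \emph{inside} $H$ with attractor and repeller in $\Lambda_H$, which I would do via the geodesic-ray characterization of conical points together with the relative (dynamical) quasiconvexity of $H$ and the almost malnormality of the peripheral structure, so that the ray to $\xi$ returns close to $H x_0$ near the cusp $p$ whenever it returns close to $G x_0$. For the passage from bounded parabolicity for $H$ to bounded parabolicity for $G$, the subtlety is to propagate cocompactness from the fiber $\Lambda_H \setminus \{\xi\}$ to all of $\pGf \setminus \{\xi\}$: a cusp-covering argument in which a $\mathrm{Stab}_H(\xi)$-fundamental set in $\Lambda_H$ must be thickened across $\pGf \setminus \Lambda_H$, using properness of the Floyd map, the bounded parabolicity of $p$ downstairs, and again almost malnormality to control how $\Lambda_H$ meets the other fibers.

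With the transfer lemma in hand, both directions follow formally. If $G$ acts geometrically finitely on $\pGf$, then by Bowditch's dichotomy every $\xi \in \Lambda_H$ is conical or bounded parabolic for $G$, hence for $H$, so $H$ acts geometrically finitely on $\Lambda_H$. Conversely, if each $H$ acts geometrically finitely on its limit set, then every point of $\pGf$ lying over a conical point is conical for $G$ by the second step, while every point of a peripheral fiber is conical or bounded parabolic for $H$ and therefore for $G$ by the transfer lemma; thus every limit point of $\pGf$ is of the allowed type and $G$ acts geometrically finitely on $\pGf$.
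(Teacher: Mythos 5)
Your outline is a genuinely different route from the paper's, but as written it has a real gap: the two ``hard'' implications of your transfer lemma are exactly where the content of the theorem lives, and you only gesture at them. The most serious is the passage from ``$\xi$ is bounded parabolic for $H \curvearrowright \LF(H)$'' to ``$\xi$ is bounded parabolic for $G \curvearrowright \pGf$''. Here you must promote cocompactness of $\mathrm{Stab}(\xi)$ from the closed invariant subset $\LF(H)\setminus\{\xi\}$ to all of $\pGf\setminus\{\xi\}$, and the complement $\pGf\setminus\LF(H)$ contains every other fiber of the Floyd map, including points of other peripheral limit sets and of conical fibers accumulating at $\xi$. Saying that a fundamental set ``must be thickened across $\pGf\setminus\LF(H)$ using properness of the Floyd map and almost malnormality'' names ingredients but not an argument; nothing in the proposal explains why a compact set whose $\mathrm{Stab}(\xi)$-translates cover $\LF(H)\setminus\{\xi\}$ can be enlarged to one whose translates cover the whole punctured boundary. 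The companion implication (conical for $G$ implies conical for $H$), which your forward direction needs, is likewise only sketched. The soft half of your argument --- confinement, the conical-fiber step, restriction of a cocompact action to a closed invariant subset --- is fine.

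The paper sidesteps both difficulties by never working pointwise with the conical/bounded-parabolic dichotomy. For the forward direction it simply cites that each $H\in\mathbb H$ is relatively quasiconvex in any relatively hyperbolic structure on $G$ (Lemma \ref{undistorted}) together with the equivalence of relative quasiconvexity with geometric finiteness of the induced action on the limit set (Theorem \ref{GFQC}). For the converse it replaces geometric finiteness by Gerasimov's $2$-cocompactness criterion (Theorem \ref{GFRH}) and exploits the fact that the Floyd map $\phi:\pGf\to\TGH$ is distance-decreasing from the Floyd metric to the shortcut metric: given a pair of distinct points of $\pGf$, either their $\phi$-images are distinct, in which case $2$-cocompactness of $G$ on $\TGH$ produces a group element separating them by a definite amount already upstairs, or they lie in a single $\LF(H^g)$ by the kernel description (Theorem \ref{floydker}), in which case $2$-cocompactness of $H$ on $\LF(H)$ does the job. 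This yields a compact fundamental domain for $G$ on $\Theta^2(\pGf)$ in a few lines, which is precisely the global statement your cusp-by-cusp thickening would have to reconstruct. If you want to salvage your approach, the bounded-parabolic transfer is the step you must actually prove; alternatively, your forward direction can be obtained for free from dynamical quasiconvexity of the $H\in\mathbb H$, with no need for the conical transfer at all.
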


In \cite{OOS}, Olshanskii-Osin-Sapir made the following conjecture on the
relationship between relatively hyperbolic groups and their Floyd boundaries.
\begin{conjA}\cite{OOS}
If a finitely generated group has non-trivial Floyd boundary, then
it is hyperbolic relative to a collection of proper subgroups.
\end{conjA}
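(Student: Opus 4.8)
This is the Olshanskii--Osin--Sapir conjecture, which is open in full generality; accordingly the following is a plausible line of attack rather than a complete argument. The natural framework is that of convergence group actions. By Karlsson's lemma the action of $G$ on its Floyd boundary $\pGf$ is always a convergence action, and the hypothesis that $\pGf$ is non-trivial guarantees that this action is non-elementary, the boundary being in fact perfect and uncountable. On the other hand, by the characterizations of Bowditch \cite{Bow1} and Yaman, $(G,\mathbb P)$ is relatively hyperbolic precisely when $G$ admits a geometrically finite convergence action on a perfect metrizable compactum, with $\mathbb P$ the stabilizers of the bounded parabolic points. The whole problem is thus to manufacture, out of the Floyd action, a geometrically finite convergence action with proper peripheral subgroups.

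The plan is to produce the required compactum as a $G$-equivariant quotient of $\pGf$. First I would isolate the conical limit points of the Floyd action, which already behave correctly, so that the difficulty is concentrated in the remaining points. I would then seek a closed $G$-invariant equivalence relation $\sim$ on $\pGf$ that collapses each ``parabolic-like'' region to a single point, arranged so that the putative parabolic stabilizers act cocompactly on the complement of the collapsed point. The target compactum is $M=\pGf/\!\sim$; morally the quotient map $\pGf \to M$ plays the role of the Floyd map of Gerasimov \cite{Ge2}, which collapses $\pGf$ onto the Bowditch boundary, except that here it must be constructed a priori, before relative hyperbolicity is available. One must then check that the induced $G$-action on $M$ is still a convergence action and that on $M$ every point is either conical or bounded parabolic; this last property is the definition of geometric finiteness, and together with Yaman's theorem it would deliver relative hyperbolicity, the peripheral subgroups being proper because the action is non-elementary and so no subgroup fixes a point of $M$ globally.

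The main obstacle is precisely the construction and control of this quotient. Theorem \ref{mainthm4} together with the example in Proposition \ref{Dunwoody} shows that the Floyd action itself need not be geometrically finite even when $G$ is relatively hyperbolic: Dunwoody's group carries boundary points that are neither conical nor bounded parabolic, so the collapsing step cannot be avoided. Moreover there is no general mechanism guaranteeing that the collapse yields a well-defined convergence action, nor that the resulting parabolic subgroups are finitely generated with again controllable Floyd boundaries; the natural strategy of recursing on the peripheral subgroups may fail to terminate exactly when $G$ is inaccessible in the sense of Dunwoody. Taming these non-conical points and excluding an infinite regress of peripheral structures is, I expect, the crux on which the conjecture turns, and it is the reason the statement has remained only a conjecture.
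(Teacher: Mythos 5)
The statement you were given is Conjecture A of Olshanskii--Osin--Sapir, which the paper states as an open conjecture and never claims to prove, so your refusal to present a complete argument is exactly the right call; a purported proof here would have been a red flag. Your sketch is also factually sound as far as it goes: Karlsson's result does make the Floyd action a non-elementary convergence action once $\pGf$ has at least three points, the Bowditch--Yaman--Gerasimov characterization (Theorem \ref{GFRH}) does reduce the conjecture to manufacturing a geometrically finite action, and Proposition \ref{Dunwoody} together with Theorem \ref{mainthm4} does show the Floyd action itself can fail to be geometrically finite, so some equivariant collapsing is unavoidable. Where you and the paper part ways is in what is done short of a proof: instead of the direct attack you outline (building a quotient compactum, i.e.\ morally inverting Gerasimov's Floyd map before relative hyperbolicity is available --- which, as you correctly identify, is the crux), the paper proves in Proposition \ref{conjecture} that Conjecture A is \emph{equivalent} to Conjecture B, that a finitely generated group hyperbolic relative to NRH proper subgroups acts geometrically finitely on its Floyd boundary. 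The key device in the direction relevant to your statement (B implies A) is a free product trick absent from your proposal: if $\Gamma$ were NRH with nontrivial Floyd boundary, then $G = \Gamma \ast F_2$ is hyperbolic relative to $\{\Gamma\}$; Conjecture B would make $G \curvearrowright \pGf$ geometrically finite, Theorem \ref{mainthm4} transfers geometric finiteness to $\Gamma \curvearrowright \LF(\Gamma)$, and Theorem C of \cite{GePo3} identifies $\LF(\Gamma)$ with the Floyd boundary of $\Gamma$, contradicting that $\Gamma$ is NRH. This reduction does not settle the conjecture either, but it localizes the entire difficulty in the peripheral subgroups, which is a sharper and more usable statement than the open-ended quotient construction you describe.
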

\begin{rem}
The converse of Conjecture A follows from Gerasimov's theorem \cite{Ge1} on Floyd maps for relatively hyperbolic groups.
\end{rem}

By simple arguments, we observe that Conjecture A is in fact equivalent to the following cojecture. See Proposition \ref{conjecture}.

\begin{conjB}
If a finitely generated group is hyperbolic relative to a collection
of NRH proper subgroups, then it acts geometrically finitely on its
Floyd boundary.
\end{conjB}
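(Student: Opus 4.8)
The plan is to deduce Conjecture B from Theorem \ref{mainthm4} together with the Floyd map technology of Gerasimov \cite{Ge2} and Gerasimov--Potyagailo \cite{GePo2}, reducing the whole statement to the triviality of the Floyd boundaries of the peripheral subgroups. So suppose $G$ is finitely generated and hyperbolic relative to a collection $\mathbb H = \Hl$ of NRH proper subgroups. By Theorem \ref{mainthm4}, $G$ acts geometrically finitely on $\pGf$ if and only if each $H \in \mathbb H$ acts geometrically finitely on its limit set $\Lambda(H)$ for the action on $\pGf$. Thus it suffices to describe the limit set of a peripheral subgroup inside the Floyd boundary and to show the induced action is geometrically finite.

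The first step is to identify $\Lambda(H)$. Since $\GH$ is relatively hyperbolic, Gerasimov's theorem \cite{Ge2} furnishes a $G$-equivariant continuous Floyd map $F \colon \pGf \to \partial(G,\mathbb H)$ onto the Bowditch boundary. Each $H \in \mathbb H$ is a maximal parabolic subgroup fixing a single parabolic point $p \in \partial(G,\mathbb H)$, and by the fiber analysis of Gerasimov--Potyagailo \cite{GePo2} the preimage $F^{-1}(p)$ coincides with $\Lambda(H)$ and is $H$-equivariantly homeomorphic to the Floyd boundary $\partial_f H$ of $H$. Consequently the action of $H$ on $\Lambda(H)$ is conjugate to its action on $\partial_f H$, so the problem is localized to a single peripheral factor.

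The second step is then immediate once $\partial_f H$ is understood. If $\partial_f H$ is trivial, i.e.\ consists of at most one point, then $\Lambda(H) = \{p\}$, the stabilizer $H$ fixes $p$ and acts (vacuously) cocompactly on $\Lambda(H)\setminus\{p\} = \emptyset$, whence $p$ is a bounded parabolic point and $H$ acts geometrically finitely on its limit set. Feeding this back through Theorem \ref{mainthm4} shows that $G$ acts geometrically finitely on $\pGf$, which is exactly Conjecture B. Thus the entire statement reduces to the assertion that \emph{every finitely generated NRH group has trivial Floyd boundary}.

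This last assertion is the main obstacle, and it is where the genuine difficulty lies, since it is precisely the contrapositive of Conjecture A applied to the subgroup $H$: a non-trivial Floyd boundary ought to force relative hyperbolicity, contradicting the NRH hypothesis. One natural attempt is to use that, by Karlsson's theorem, $H$ acts as a convergence group on $\partial_f H$; were $\partial_f H$ non-trivial this action would be non-elementary, and by the Bowditch--Yaman characterization a \emph{geometrically finite} non-elementary convergence action would exhibit $H$ as hyperbolic relative to proper subgroups, contrary to NRH. The gap is that one cannot assume a priori that the convergence action on $\partial_f H$ is geometrically finite, and establishing that is tantamount to Conjecture B for $H$ itself. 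This circularity is exactly the equivalence of Conjectures A and B recorded in Proposition \ref{conjecture}; accordingly, an unconditional proof of Conjecture B requires genuinely new input ruling out a non-trivial Floyd boundary for an NRH group, beyond the reduction carried out above.
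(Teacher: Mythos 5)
The statement you were given is Conjecture B, which the paper does not prove: it is an open conjecture, and the paper's only result concerning it is Proposition \ref{conjecture}, the equivalence of Conjectures A and B. Your attempt is, in substance, exactly the paper's own argument for the direction ``Conjecture A implies Conjecture B'': you invoke Theorem \ref{mainthm4} to localize the question to each peripheral subgroup $H \in \mathbb H$, identify $\LF(H)$ with the Floyd boundary of $H$ via the Floyd map, and observe that triviality of $\partial_f H$ would force geometrical finiteness of $H \curvearrowright \LF(H)$ and hence of $G \curvearrowright \pGf$. Your closing diagnosis is also the correct one, and coincides with the paper's: the residual assertion that every finitely generated NRH group has trivial Floyd boundary is precisely the contrapositive of Conjecture A, so what you have is a reduction of B to A, not a proof, and no unconditional proof is known. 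Two small corrections to your write-up: the fiber theorem of \cite{GePo2} (Theorem \ref{floydker}) only gives $F^{-1}(p) = \LF(H)$; the further statement that $\LF(H)$ is homeomorphic to the intrinsic Floyd boundary $\partial_f H$ of $H$ (for a suitable Floyd scaling function) is \cite[Theorem C]{GePo3}, which is what the paper actually uses in Proposition \ref{conjecture}. Also, the paper's convention is that a ``trivial'' Floyd boundary may have up to two points, not one; the two-point case (where $H$ would be two-ended, hence hyperbolic, hence not NRH) is ruled out by the NRH hypothesis rather than by your ``at most one point'' reading.
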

\begin{rem}
One result of Behrstock-Drutu-Mosher \cite[Proposition 6.3]{BDM} says that Dunwoody's inaccessible group does not satisfy the assumption of Conjecture B.
\end{rem}


The structure of the paper is as follows. In Section 2, we restate
Bounded Coset Penetration property for countable relative
hyperbolicity and indicate the equivalence of the definitions of
countable relative hyperbolicity due to Osin and Farb. We also
construct a quasi-isometric map from a relatively quasiconvex
subgroup to the ambient group. Our construction leads to a new proof
of Hruska's result \cite{Hru} that relatively quasiconvex subgroups
are relatively hyperbolic. In Section 3, we study parabolically
extended structures and give the proof of Theorem \ref{mainthm}. In
Section 4, we take a dynamical approach to relative hyperbolicity
and then prove Theorems \ref{mainthm2} and \ref{mainthm4}.
Dunwoody's inaccessible group and Olshanskii-Osin-Sapir's conjecture are discussed in this section.

\begin{ack}
The author would like to sincerely thank Prof. Leonid Potyagailo for
many helpful comments on this work.  The author also thanks Denis
Osin for some corrections in an earlier version of Theorem
\ref{mainthm}, and Jason Manning for helpful conversations in a
Geometric Group Theory conference at Luminy in May 2010.
\end{ack}

\section{Preliminaries}

\subsection{Cayley graphs and partial distance functions}

Let $G$ be a group with a set $\mathcal A \subset G$. Note that the
alphabet set $\mathcal A$ is assumed to neither be finite and nor
generate $G$. For convenience, we always assume $1 \notin \mathcal
A$ and $\mathcal A = \mathcal A^{-1}$.

We define the \textit{Cayley graph} $\Ga$ of a group $G$ with
respect to $\mathcal A$, as a directed edge-labeled graph with the
vertex set $V(\Ga )=G$ and the edge set $E(\Ga )=G\times \mathcal
A$. An edge $e=[g,a]$ goes from the vertex $g$ to the vertex $ga$
and has the label $\lab (e)=a$. As usual, we denote the origin and
the terminus of the edge $e$, i.e., the vertices $g$ and $ga$, by
$e_-$ and $e_+$ respectively. By definition, we set $e^{-1} := [ga,
a^{-1}]$.

Let $p= e_1 e_2\ldots e_k$ be a combinatorial path in the Cayley
graph $\Ga $, where $e_1, e_2, \ldots , e_k\in E(\Ga )$. The length
of $p$ is the number of edges in $p$, i.e. $\len(p) = k$. We define
the label of $p$ as $\lab(p)=\lab(e_1)\lab(e_2)\ldots \lab (e_k).$
The path $p^{-1}$ is defined in a similar way. We also denote by
$p_-=(e_1)_-$ and $p_+=(e_k)_+$ the origin and the terminus of $p$
respectively. A \textit{cycle} $p$ is a path such that $p_- = p_+$.

\begin{defn}(Partial Distance Functions) \label{distance}
By assigning the length of each edge in $\Ga $ to be $1$, we define
a \textit{partial distance function} $\da: \Ga \times \Ga \to [0,
\infty]$ as follows. Note that $\mathcal A$ is not assumed to
generate $G$ and thus $\Ga$ may be disconnected. For $z, w \in \Ga$,
if $z$ and $w$ lie in the same path connected component of $\Ga$, we
define $\da(z,w)$ as the length of a shortest path in $\Ga$ between
$z$ and $w$. Otherwise we set $\da(z,w) = \infty$.
\end{defn}

\begin{rem} \label{wordmetric}
If $\langle \mathcal A \rangle = G$, then the partial distance
function $\da$ actually gives a \textit{word metric} with respect to
$\mathcal A$ on the Cayley graph $\Ga$. Note that if $g_1 , g_2 \in
G$ and $g_1^{-1} g_2 \notin \langle \mathcal A \rangle$, then
$\da(g_1, g_2) = \infty$. For any element $g \in G$, we define its
norm $\norm{ g }{\mathcal A} = \da(1, g)$.
\end{rem}

A path $p$ in the Cayley graph $\Ga$ is called \textit{$(\lambda,
c)$-quasigeodesic} for some $\lambda \geq 1$, $c \geq 0$, if the
following inequality holds for any subpath $q$ of $p$, $$\len(p)
\leq \lambda \da(q_-, q_+) + c.$$

We often consider a group $G$ with a collection of subgroups
$\mathbb H = \Hl$. Then $X$ is a \textit{relative generating set}
for $\GH$ if $G$ is generated by the set $\left(\cup_{i \in I} H_i
\right) \cup X$ in the traditional sense.

Let $\mathcal H= \bigsqcup\limits_{i \in I} H_i \setminus \{1\}$.
Fixing a relative generating set $X$ for $\GH$, the constructed
Cayley graph $\G$ is called the \textit{relative Cayley graph} of
$G$ with respect to $\mathbb H$. We now collect some notions
introduced by Osin \cite{Osin} in relative Cayley graphs.

\begin{defn}
Let $p, q$ be paths in $\G$. A subpath $s$ of $p$ is called an
\textit{$H_i $-component}, if $s$ is the maximal subpath of $p$ such
that $s$ is labeled by letters from $H_i$.

Two $H_i $-components $s, t$ of $p, q$ respectively are called
\textit{connected} if there exists a path $c$ in $\G$ such that
$c_-=s_-, c_+=t_-$ and $c$ is labeled by letters from $H_i$. An $H_i
$-component $s$ of $p$ is \textit{isolated} if no other
$H_i$-component of $p$ is connected to $s$.

We say a path $p$ \textit{without backtracking} by meaning that all
$H_i $-components of $p$ are isolated. A vertex $u$ of $p$ is
\textit{nonphase} if there is an $H_i$-component $s$ of $p$ such
that $u$ is a vertex of $s$ but $u \neq s_-, u \neq s_+$. Other
vertices of $p$ are called \textit{phase}.
\end{defn}

\subsection{Relatively hyperbolic groups}
In the large part of this paper, we consider countable relatively
hyperbolic groups. In this subsection, we shall recall the
definitions of countable relative hyperbolicity in the sense of Osin
and Farb, and then indicate their equivalence based on Osin's
results in \cite{Osin}.

Let $G$ be a countable group with a finite collection of subgroups
$\mathbb H = \Hl$. As the notion of relative generating sets, we can
define in a similar fashion the relative presentations and
(relative) Dehn functions of $G$ with respect to $\mathbb H$. We
refer the reader to \cite{Osin} for precise definitions.

We now give the first definition of relative hyperbolicity due to
Osin \cite{Osin}. Note that the full version of Osin's definition
applies to general groups without assuming the finiteness of
$\mathbb H$.

\begin{defn}\label{OsinRH} (Osin Definition)
A countable group $G$ is \textit{hyperbolic relative to $\mathbb H$
in the sense of Osin} if $G$ is finitely presented with respect to
$\mathbb H$ and the relative Dehn function of $G$ with respect to
$\mathbb H$ is linear.
\end{defn}

The following lemma plays an important role in Osin's approach
\cite{Osin} to relative hyperbolicity. The finite subset $\Omega$
and constant $\kappa$ below depend on the choice of finite relative
presentations of $G$ with respect to $\mathbb H$. In our later use
of Lemma \ref{Omega}, when saying there exists $\kappa, \Omega$ such
that the inequality (\ref{keyinequality}) below holds in $\G$, we have
implicitly chosen a finite relative presentation of $G$ with respect
to $\mathbb H$.

\begin{lem}\label{Omega} \cite[Lemma 2.27]{Osin}
Suppose $\GH$ is relatively hyperbolic in the sense of Osin and $X$
is a finite relative generating set for $\GH$. Then there exists
$\kappa \geq 1$ and a finite subset $\Omega \subset G$ such that the
following holds. Let $c$ be a cycle in $\G$ with a set of isolated
$H_i$-components $S=\{s_1, \ldots , s_k\}$ of $c$ for some $i \in
I$, Then
\begin{equation} \label{keyinequality}
\sum\limits_{s \in S} d_{\Omega_i}(s_-,s_+) \le \kappa \len(c),
\end{equation}
where $\Omega_i := \Omega \cap H_i$.
\end{lem}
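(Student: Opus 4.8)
The plan is to derive the inequality from the linear relative isoperimetric inequality built into Osin's definition, by a van Kampen diagram argument. Since $\GH$ is hyperbolic relative to $\mathbb H$ in the sense of Osin, I would first fix a finite relative presentation $\langle X, \mathcal H \mid \mathcal R\rangle$ of $G$, where $\mathcal R$ is a \emph{finite} set of words over $X \cup \mathcal H$ and the relative Dehn function with respect to this presentation is linear; thus there is a constant $C \geq 1$ with $\mathrm{Area}^{\mathrm{rel}}(w) \le C\,\len(w)$ for every word $w$ over $X\cup\mathcal H$ representing $1$ in $G$, where $\mathrm{Area}^{\mathrm{rel}}$ counts only the cells labelled by relators from $\mathcal R$. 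I would then set $\Omega$ to be the (finite) set of all letters from $\mathcal H$ that occur in the relators of $\mathcal R$, and take $\kappa = C\cdot N$, where $N = \max_{R\in\mathcal R}\len(R)$. This is precisely the choice of $\kappa$ and $\Omega$ for which the estimate can be expected to hold, and their dependence on the chosen finite relative presentation matches the statement.

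Now let $c$ be a cycle in $\G$ with isolated $H_i$-components $S = \{s_1,\dots,s_k\}$. Its label $\lab(c)$ represents $1$ in $G$, so I would pick a relative van Kampen diagram $\Delta$ over $\langle X, \mathcal H \mid \mathcal R\rangle$ with boundary label $\lab(c)$ and with at most $C\,\len(c)$ cells whose boundary labels lie in $\mathcal R$ (the $\mathcal R$-cells); every remaining cell is an $\mathcal H$-cell, whose boundary label is a word over a single $H_i\setminus\{1\}$ representing $1$ in $H_i$. The heart of the argument is to analyze, for each boundary component $s_j$, the family of $\mathcal H$-edges of $\Delta$ that are joined to $s_j$ through chains of $\mathcal H$-cells of the \emph{same} type $H_i$. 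The key step — and the one I expect to be the main obstacle — is to exploit the isolation hypothesis: because $s_j$ is an isolated $H_i$-component of $c$, no $\mathcal H$-connection inside $\Delta$ can join $s_j$ to another boundary $H_i$-component, so every $H_i$-edge reachable from $s_j$ through $\mathcal H$-cells must eventually lie on the boundary of some $\mathcal R$-cell. Reading off the resulting group element, I would conclude that $s_{j,-}^{-1}s_{j,+}$ equals, in $H_i$, a product of the $H_i$-letters of the $\mathcal R$-cells connected to $s_j$, each of which lies in $\Omega_i = \Omega\cap H_i$ by the definition of $\Omega$. Making precise the phrase ``connected through $\mathcal H$-cells'' (e.g.\ by contracting the $\mathcal H$-cells, or by induction on the number of cells) and checking that each $H_i$-letter of each $\mathcal R$-cell is counted for at most one component $s_j$ is the delicate combinatorial part.

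Granting that analysis, the conclusion follows by a counting argument. Since $s_{j,-}^{-1}s_{j,+}$ is a product of elements of $\Omega_i$, left-invariance gives $d_{\Omega_i}(s_{j,-},s_{j,+}) = \norm{s_{j,-}^{-1}s_{j,+}}{\Omega_i}$, which is bounded by the number of $H_i$-letters of the $\mathcal R$-cells connected to $s_j$. By isolation these letter-families are pairwise disjoint as $j$ varies, so summing over $S$ gives a total at most the number of $H_i$-letters occurring on all $\mathcal R$-cells of $\Delta$; this is at most $N$ times the number of $\mathcal R$-cells, hence at most $N\cdot C\,\len(c) = \kappa\,\len(c)$. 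This yields inequality (\ref{keyinequality}) and completes the proof of Lemma \ref{Omega}.
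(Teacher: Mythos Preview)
The paper does not supply its own proof of this lemma; it is quoted verbatim from Osin's memoir as \cite[Lemma~2.27]{Osin} and used as a black box throughout. Your outline is essentially Osin's original argument: fix a finite relative presentation with linear relative Dehn function, let $\Omega$ be the finite set of $\mathcal H$-letters occurring in the relators $\mathcal R$, fill the cycle by a van Kampen diagram with at most $C\,\len(c)$ many $\mathcal R$-cells, and observe that an isolated $H_i$-component $s_j$ lies on the boundary of a maximal connected $H_i$-subdiagram whose other boundary edges must lie on $\mathcal R$-cells (isolation prevents them from lying on $\partial\Delta$), so that $(s_j)_-^{-1}(s_j)_+$ is a product of letters from $\Omega_i$. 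The point you flag as delicate---that distinct isolated components sit in distinct maximal $H_i$-subdiagrams and hence the contributing $\mathcal R$-cell edges are disjoint---is handled in Osin's proof by the combinatorics of these maximal subdiagrams; once that is in place your count $\sum_j d_{\Omega_i}((s_j)_-,(s_j)_+) \le N\cdot C\,\len(c)$ is exactly the conclusion. So your proposal is correct and coincides with the source proof the paper is citing.
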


\begin{rem}
By the definition of $d_{\Omega_i}$, if $d_{\Omega_i}(g,h) < \infty$
for $g, h \in G$, then there exists a path $p$ labeled by letters
from $\Omega_i$ in this new Cayley graph $\mathscr{G} (G, X \cup
\Omega \cup \mathcal H)$ such that $p_- = g, p_+ = h$.
\end{rem}

Using Lemma \ref{Omega}, the following lemma can be proven exactly
as Proposition 3.15 in \cite{Osin}. The finite set $\Omega$ below is
given by Lemma \ref{Omega}.
\begin{lem} \label{stableqg} \cite{Osin}
Suppose $\GH$ is relatively hyperbolic in the sense of Osin and $X$
is a finite relative generating set for $\GH$. For any $\lambda \ge
1, c \ge 0$, there exists a constant $\epsilon =
\epsilon(\lambda,c)>0$ such that the following holds. Let $p,q$ be
$(\lambda,c)$-quasigeodesics without backtracking in $\G$ such that
$p_-=q_-, p_+=q_+$. Then for any phase vertex $u$ of $p$(resp. $q$),
there exists a phase vertex $v$ of $q$(resp.$p$) such that
$\dxo(u,v) < \epsilon$.
\end{lem}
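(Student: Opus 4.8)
The plan is to follow Osin's proof of Proposition 3.15 in \cite{Osin}, isolating the inputs it needs and checking that each survives the passage to countable $G$. Osin's argument rests on two facts: first, that quasigeodesics without backtracking with common endpoints are $\dxh$-Hausdorff close (the Morse property in $\G$), proved earlier in \cite{Osin}; and second, the penetration estimate Lemma \ref{Omega}, which is the only place the finiteness of the data is used and which is already available for countable $G$. I would use Lemma \ref{Omega} purely as a translation device: if $\Delta$ is a cycle in $\G$ all of whose $H_i$-components are isolated, then keeping the $X$-edges of $\Delta$ and replacing each component $s$ by a path of $d_{\Omega_i}(s_-,s_+)$ edges labelled in $\Omega_i$ produces, via (\ref{keyinequality}) applied index by index, a path through $X\cup\Omega$-edges joining any two vertices of $\Delta$, of length bounded by a function of $\len(\Delta)$ and $\kappa$ alone. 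Thus a \emph{short} cycle with isolated components forces its vertices to be $\dxo$-close.

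With this device the proof becomes a localization. Fix a phase vertex $u$ of $p$. The Morse property gives a vertex $w$ of $q$ and a geodesic $t$ of $\G$ from $u$ to $w$ with $\len(t)\le\sigma$, where $\sigma=\sigma(\lambda,c)$. Let $v$ be the phase vertex of $q$ obtained by pushing $w$ to the nearest endpoint of the $H_i$-component of $q$ containing it; the no-backtracking hypothesis makes this component unique, so $v$ is genuinely phase and $\dxh(w,v)\le 1$. Let $e_0$ be the single $H_i$-edge from $w$ to $v$ and $\eta$ a geodesic of $\G$ from $v$ to $u$, of length $\le\sigma+1$. I then form the triangle $\Delta=t\cdot e_0\cdot\eta$, whose perimeter is at most $2\sigma+2$.

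Feeding $\Delta$ into the translation device of the first paragraph bounds $\dxo(u,v)$ by a constant depending only on $\sigma$ and $\kappa$, hence only on $\lambda$ and $c$; this constant is the desired $\epsilon$. Since $u$ and $v$ are both phase and the bound is uniform, and the roles of $p$ and $q$ are symmetric, this is precisely the conclusion of Lemma \ref{stableqg}.

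The hard part will be the component bookkeeping that must precede any use of Lemma \ref{Omega}, since that lemma controls only isolated components. Splicing $t$, $e_0$, and $\eta$ into $\Delta$ may create a connected pair of $H_i$-components; when it does, the two offenders lie in a common coset $gH_i$, and the arc of $\Delta$ between them can be rerouted through a single $H_i$-edge. Each reroute strictly shortens $\Delta$ and moves its endpoints by a bounded amount, so after finitely many of them all components are isolated and the estimate goes through at the cost of enlarging $\epsilon$. Checking that these reroutings terminate with uniformly bounded total error is the only delicate point; the remainder is a transcription of Osin's argument, now legitimate over countable $G$ because Lemma \ref{Omega} holds there.
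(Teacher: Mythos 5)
Your high-level plan is consistent with what the paper actually does: the paper offers no proof of Lemma \ref{stableqg} at all, deferring verbatim to Osin's Proposition 3.15 and noting only that Lemma \ref{Omega} --- the one input whose proof uses finiteness of the data --- survives for countable $G$. Your ``translation device'' is also sound: for a cycle of bounded length all of whose components are isolated, Lemma \ref{Omega} (applied for each $i\in I$ and summed over the finite index set) replaces each component by an $\Omega_i$-word, producing an $X\cup\Omega$-path of controlled length through the \emph{phase} vertices of the cycle; so such cycles do force $\dxo$-closeness of their phase vertices.

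The gap is in the surgery that is supposed to reduce to the isolated case, and this is not a routine detail --- it is where the actual content of Osin's proof lives. Suppose an $H_j$-component $s_1$ of $t$ is connected to an $H_j$-component $s_2$ of $\eta$. Then $u$ lies on one arc of $\Delta$ between the two offenders while $w$, $e_0$ and $v$ lie on the other. Rerouting either arc through a single $H_j$-edge deletes from the cycle whichever of $u,v$ that arc contains, after which the cycle certifies nothing about $\dxo(u,v)$; and you cannot simply accept the jump across the common $H_j$-coset (from $(s_1)_-$ or $(s_2)_+$ to $(s_1)_+$ or $(s_2)_-$), because that is exactly the kind of displacement that $d_{X\cup\Omega}$ does not control --- Lemma \ref{Omega} bounds $d_{\Omega_j}(s_-,s_+)$ only for \emph{isolated} components, and $s_1,s_2$ are precisely not isolated. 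Saying the reroute ``moves its endpoints by a bounded amount'' only makes sense in $\dxh$ (one peripheral edge), which is useless for the $\dxo$ conclusion. The correct resolution is to re-choose the target phase vertex $v$ of $q$ (and the connecting paths) according to which components of the connector are connected to components of $q$ itself; that case analysis is the bulk of Osin's argument and is absent here. A second, related omission: if the terminal edge of $\eta$ and the initial edge of $t$ (or $e_0$ and its neighbours) lie in a common peripheral coset, they merge into a single component of $\Delta$ having $u$ (respectively $v$) as an \emph{interior} vertex, and the $\Omega$-replacement path then misses that vertex entirely. As written, your argument proves the lemma only when the auxiliary triangle happens to have all components isolated and $u,v$ phase in it.
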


The following lemma is well-known in the theory of relatively
hyperbolic groups.

\begin{lem} \label{periphintersect} \cite{Osin}
Suppose $\GH$ is relatively hyperbolic in the sense of Osin. Then
the following statements hold for any $g \in G$ and $H_i, H_j \in
\mathbb H$, \\
1) If $H_i^g \cap H_i$ is infinite, then $g \in H_i$,\\
2) If $i \neq j$, then $H_i^g \cap H_j$ is finite.
\end{lem}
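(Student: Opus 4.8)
The plan is to prove both statements by the same counting argument built on the isolated-component inequality of Lemma \ref{Omega}. The common setup is as follows. Given $g \in G$ and an element $f$ in the relevant intersection, set $h = g^{-1} f g$ and consider the cycle $c$ in $\G$ whose label is the relator $f^{-1} \cdot g \cdot h \cdot g^{-1}$ (which indeed equals $1$ in $G$), where the two segments labeled $g$ and $g^{-1}$ are taken to be geodesics without backtracking. This cycle visits the vertices $1$, $f^{-1}$, $f^{-1}g$, $g$, $1$, and its two distinguished single-letter edges are $a$ (labeled $f^{-1}$, from $1$ to $f^{-1}$) and $b$ (labeled $h$, from $f^{-1}g$ to $g$). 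One computes $\len(c) \le 2\norm{g}{X\cup\mathcal H} + 2$, a quantity depending only on $g$.

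For statement 1), I would argue the contrapositive: assume $g \notin H_i$ and let $f$ range over $H_i^g \cap H_i$, so that $f, h \in H_i$ and both $a, b$ are $H_i$-components of $c$. If $a$ and $b$ were connected, there would be an $H_i$-labeled path from $a_- = 1$ to $b_- = f^{-1}g$, forcing $f^{-1}g \in H_i$ and hence $g \in H_i$, contrary to hypothesis. Thus $a$ and $b$ are not connected, and once isolation is secured (see below) Lemma \ref{Omega} applied to $S = \{a,b\}$ gives $d_{\Omega_i}(1, f^{-1}) \le \kappa \len(c) \le \kappa(2\norm{g}{X\cup\mathcal H}+2)$, a bound independent of $f$. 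Since $\Omega_i = \Omega \cap H_i$ is finite, only finitely many elements have $\Omega_i$-norm below a fixed constant, so $H_i^g \cap H_i$ is finite.

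For statement 2), fix $i \ne j$ and let $f$ range over $H_i^g \cap H_j$, so $f \in H_j$ and $h = g^{-1}fg \in H_i$. Here $a$ is an $H_j$-component while $b$ is an $H_i$-component, and since components of distinct peripheral subgroups are never connected, $a$ is automatically not connected to $b$. After securing isolation, Lemma \ref{Omega} applied with the single isolated $H_j$-component $a$ yields $d_{\Omega_j}(1, f^{-1}) \le \kappa \len(c)$, again bounded independently of $f$, so $H_i^g \cap H_j$ lies in a finite $\Omega_j$-ball and is finite.

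The main obstacle, and the only step requiring genuine care, is the isolation claim: a priori the geodesic segments labeled $g$ and $g^{-1}$ may carry $H_i$- (resp. $H_j$-) components that connect to $a$ or $b$, blocking the direct use of Lemma \ref{Omega}. I expect to dispose of this by the standard normalization of choosing $g$ to be a shortest representative of its double coset ($H_i g H_i$ in case 1), $H_j g H_i$ in case 2)) together with the bounded coset penetration property of Lemma \ref{stableqg}: any such connection would let one absorb a parabolic prefix or suffix of $g$ and produce a strictly shorter representative, contradicting minimality. Once isolation is established, the finiteness of $\Omega_i$ (hence of each $\Omega_i$-metric ball) converts the uniform bound of Lemma \ref{Omega} into the desired finiteness conclusion in both parts.
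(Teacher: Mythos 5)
The paper gives no proof of this lemma---it is quoted directly from Osin's memoir---and your argument reconstructs essentially that original proof: the relator cycle labeled $f^{-1}ghg^{-1}$, isolation of the two distinguished peripheral edges via a shortest double-coset representative, and the uniform bound from Lemma \ref{Omega} forcing the intersection into a finite $\Omega_i$-ball (hence a finite set). The approach is correct; the only blemish is in the isolation step, where nothing beyond the double-coset minimality (absorbing a parabolic prefix or suffix to shorten the representative) is actually needed, and your appeal to ``the bounded coset penetration property of Lemma \ref{stableqg}'' misattributes that lemma, which is the quasigeodesic stability statement rather than BCP.
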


In order to formulate the BCP property, we shall put a metric $d_G$
on a group $G$, which is \textit{proper} if any bounded set is
finite, and \textit{left invariant} if $d_G(g x_1, g x_2) = d_G(x_1,
x_2)$ for any $g, x_1, x_2 \in G$. For given $g \in G$, we define
the norm $\norm{g}{d_G}$ with respect to $d_G$ to be the distance
$d_G(1,g)$.

Let us now recall the following lemma in Hruska-Wise \cite{HrWi},
which justifies the use of proper left invariant metrics on
countable groups.
\begin{lem} \cite{HrWi}
A group is countable if and only if it admits a proper left
invariant metric.
\end{lem}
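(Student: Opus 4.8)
The statement has two directions, and the forward direction (countable implies a proper left invariant metric) carries all the difficulty. For the easy direction, suppose $G$ admits a proper left invariant metric $d_G$. For each $n \in \mathbb{N}$ the ball $B(1,n) = \{g \in G : d_G(1,g) \le n\}$ is bounded, hence finite by properness. Since $d_G$ takes finite values, every $g \in G$ lies in some $B(1,n)$, so $G = \bigcup_{n \ge 1} B(1,n)$ is a countable union of finite sets and therefore countable.

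For the converse, the plan is to manufacture the metric from a \emph{length function} $\para{\cdot} : G \to [0,\infty)$ and then set $d_G(x,y) = \para{x^{-1}y}$; left invariance is automatic, since $\para{(gx)^{-1}(gy)} = \para{x^{-1}y}$. To build $\para{\cdot}$, I first fix a \emph{weight} $w : G \setminus \{1\} \to [1,\infty)$ that is symmetric, $w(g) = w(g^{-1})$, and proper in the sense that $\{g : w(g) \le n\}$ is finite for every $n$. Such a $w$ exists because $G$ is countable: partition $G \setminus \{1\}$ into the orbits of the involution $g \mapsto g^{-1}$ (singletons $\{g\}$ when $g=g^{-1}$, pairs $\{g,g^{-1}\}$ otherwise), enumerate these orbits $O_1, O_2, \dots$, and set $w(g) = i$ for $g \in O_i$. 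Symmetry holds because $g$ and $g^{-1}$ lie in the same orbit, and $\{g : w(g) \le n\} = O_1 \cup \cdots \cup O_n$ is finite.

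I then define $\para{1} = 0$ and, for $g \ne 1$,
\[
  \para{g} = \inf \Big\{ \sum_{i=1}^{k} w(h_i) : g = h_1 h_2 \cdots h_k,\ h_i \in G \setminus \{1\} \Big\}.
\]
Subadditivity $\para{gh} \le \para{g} + \para{h}$ follows by concatenating expressions; symmetry $\para{g} = \para{g^{-1}}$ follows by reversing $g = h_1 \cdots h_k$ into $g^{-1} = h_k^{-1} \cdots h_1^{-1}$ and using $w(h_i) = w(h_i^{-1})$; and $\para{g} > 0$ for $g \ne 1$ since every factor contributes at least $1$. These facts show that $d_G(x,y) = \para{x^{-1}y}$ is a genuine left invariant metric, the triangle inequality being subadditivity applied to $x^{-1}z = (x^{-1}y)(y^{-1}z)$.

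The main obstacle, and the only point where the growth of $w$ is used, is \emph{properness} of the resulting metric. By left invariance every bounded set is contained in a translate of a ball $\{g : \para{g} \le n\}$, so it suffices to show this ball is finite. If $\para{g} \le n$, then $g$ admits an expression $h_1 \cdots h_k$ with $\sum_i w(h_i) \le n+1$; since each $w(h_i) \ge 1$ we get $k \le n+1$, and since each $w(h_i) \le n+1$ every factor lies in the finite set $\{h : w(h) \le n+1\}$. Thus $g$ is a product of boundedly many elements from a fixed finite set, so only finitely many $g$ satisfy $\para{g} \le n$, which completes the construction.
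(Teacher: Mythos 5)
Your proof is correct. Note that the paper does not prove this lemma at all --- it simply quotes it from Hruska--Wise --- and your argument (a symmetric, proper weight on $G\setminus\{1\}$ obtained from an enumeration of the orbits of $g\mapsto g^{-1}$, the induced subadditive length function via infima over factorizations, and properness from the bound on both the number and the weights of the factors) is exactly the standard construction given in that reference, with all the small points (positivity of the length, nonemptiness of the set of factorizations, the $n+1$ slack in the infimum) handled correctly.
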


From now on, we assume that $\GH$ has a finite relative generating
set $X$.

In terms of proper left invariant metrics, bounded coset penetration
property is formulated as follows.
\begin{defn}\label{BCPdef} (Bounded coset penetration)
Let $d_G$ be some (any) proper, left invariant metric on $G$. The
pair $\GH$ is said to satisfy the {\it bounded coset penetration
property with respect to $d_G$} (or BCP property with respect to
$d_G$ for short) if, for any $\lambda \geq 1$, $c \geq 0$, there
exists a constant $a = a(\lambda,c,d_G)$ such that the following
conditions hold. Let $p, q$ be $(\lambda, c)$-quasigeodesics
without backtracking in $\G$ such that $p_- = q_-$, $p_+ = q_+$.

1) Suppose that $s$ is an $H_i$-component of $p$ for some $H_i \in
\mathbb H$, such that $d_G(s_-, s_+) > a$. Then there exists an
$H_i$-component $t$ of $q$ such that $t$ is connected to $s$.

2) Suppose that $s$ and $t$ are connected $H_i$-components of $p$
and $q$ respectively, for some $H_i \in \mathbb H$. Then $d_G(s_-,
t_-) < a$ and $d_G(s_+, t_+) < a$.
\end{defn}

\begin{rem}
In \cite{Hru}, Hruska proposed to use the partial distance function
$d_X$ (with respect to a finite relative generating set $X$) instead
of $d_G$ in the definition of BCP property, and showed that BCP
property is independent of the choice of relative generating sets.
However, this is generally not true, due to the following example.
\end{rem}

\begin{example} \label{BCPexample}
We take a free product $G = L \ast_F K$ of two finitely generated
groups $L$ and $K$ amalgamated over a nontrivial finite group $F$,
which is known to be hyperbolic relative to $\mathbb H = \{L, K\}$
in the sense of Farb and Osin. Take a special relative generating
set $X = \emptyset$ and construct the relative Cayley graph $\G$.
Since $F = L \cap K$ is nontrivial, we take a nontrivial element $f
= f_L = f_K \in F$, where $f_L$ and $f_K$ are the corresponding
elements in $L \setminus \{1\}$ and $K \setminus \{1\}$
respectively. Thus, there are two different edges $p$ and $q$ with
same endpoints $1$ and $f$ such that $\lab(p) = f_L$ and $\lab(q) =
f_K$ in $\G$. Obviously $p$ and $q$ are geodesics and isolated
components. Note that $d_X(p_-,p_+) = \infty$. Hence BCP property is
not well-defined with respect to $d_X$.

This example was also known to other researchers in this field, see
Remark 2.15 in a latest version of \cite{MarPed}. Moreover, the idea
using proper metrics to define BCP property also appeared
independently in Martinez-Pedroza \cite{MarPed}. See Subsection 2.3
in \cite{MarPed}.
\end{example}

\begin{rem}
We remark that Hruska's arguments in \cite{Hru} remain valid with
the new definition \ref{BCPdef} of BCP property. So the main result
concerning the equivalence of various definitions of relative
hyperbolicity in \cite{Hru} is still correct. For the convenience of
the reader, we will give a direct proof of the equivalence of Osin's
and Farb's definitions in the remaining part of this subsection.
\end{rem}

The following corollary is immediate by an elementary argument.
\begin{cor} \label{BCPindep}
BCP property of $\GH$ is independent of the choice of left invariant
proper metrics.
\end{cor}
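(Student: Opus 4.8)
The plan is to compare any two proper left invariant metrics through a single monotone gauge function, exploiting the fact that properness makes every ball finite. Fix two proper left invariant metrics $d_G$ and $d_G'$ on $G$, and recall that by left invariance each metric is recovered from its norm via $d_G(x_1,x_2) = \norm{x_1^{-1}x_2}{d_G}$, and similarly for $d_G'$. Properness means that for every $R \ge 0$ the ball $\{g \in G : \norm{g}{d_G} \le R\}$ is a finite set. First I would define the comparison function
\[
\phi(R) = \max\{\norm{g}{d_G'} : \norm{g}{d_G} \le R\},
\]
which is finite precisely because the defining set is finite; by construction $\phi$ is non-decreasing and satisfies the implication $\norm{g}{d_G} \le R \Rightarrow \norm{g}{d_G'} \le \phi(R)$ for all $g \in G$.

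Next, assuming $\GH$ satisfies BCP with respect to $d_G$, I would transfer the property to $d_G'$. Fix $\lambda \ge 1$, $c \ge 0$, let $a = a(\lambda,c,d_G)$ be the constant supplied by BCP for $d_G$, and set $a' = \phi(a) + 1$. For condition (2), if $s,t$ are connected $H_i$-components of a pair of $(\lambda,c)$-quasigeodesics without backtracking sharing endpoints, then BCP for $d_G$ gives $d_G(s_-,t_-) < a$ and $d_G(s_+,t_+) < a$; since $s_-^{-1}t_-$ and $s_+^{-1}t_+$ then lie in the $d_G$-ball of radius $a$, the defining inequality of $\phi$ yields $d_G'(s_-,t_-) \le \phi(a) < a'$ and likewise for the termini, which is exactly condition (2) for $d_G'$.

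For condition (1) I would argue by contraposition in the gauge inequality. Suppose $s$ is an $H_i$-component of $p$ with $d_G'(s_-,s_+) > a'$. Were $d_G(s_-,s_+) \le a$, then $s_-^{-1}s_+$ would lie in the $d_G$-ball of radius $a$ and hence $d_G'(s_-,s_+) \le \phi(a) < a'$, a contradiction; therefore $d_G(s_-,s_+) > a$, and condition (1) of BCP for $d_G$ produces an $H_i$-component $t$ of $q$ connected to $s$. This establishes BCP for $d_G'$ with constant $a'$. Since $d_G$ and $d_G'$ were arbitrary, the same argument with their roles exchanged gives the reverse implication, proving independence.

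The computations here are routine; the one point that genuinely carries the argument is the finiteness of $\phi(R)$, which is where properness of the metrics is indispensable. Without the assumption that bounded sets are finite there would be no control relating the two norms, and indeed Example \ref{BCPexample} shows that abandoning properness altogether (for instance passing to the partial distance $d_X$) can destroy the property, so the hypothesis cannot simply be discarded.
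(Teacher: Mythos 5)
Your argument is correct and is precisely the elementary argument the paper has in mind (the paper omits the proof entirely, calling the corollary ``immediate by an elementary argument''): properness makes every ball finite, so the gauge function $\phi$ comparing the two norms is finite-valued, and BCP transfers with the new constant $\phi(a)+1$. Nothing to add.
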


In view of Corollary \ref{BCPindep}, we shall not mention explicitly
proper left invariant metrics when saying the BCP property of $\GH$.

With a little abuse of terminology, we also say $\GH$ satisfies BCP
property with respect to a partial distance function $\da$ if, for
any $\lambda \geq 1$, $c \geq 0$, there exists a constant $a =
a(\lambda,c,\da)$ such that the statements of BCP property 1) and 2)
are true for $\da$.

When proving BCP property, we usually do it with respect to some
special partial distance function, as stated in the following
corollary.

\begin{cor} \label{BCPindep2}
Let $\mathcal A \subset G$ be a finite set. If $\GH$ satisfies BCP
property with respect to $\da$, then so does $\GH$ with respect to
any proper left invariant metric.
\end{cor}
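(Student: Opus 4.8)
My plan is to compare the partial distance function $\da$ with an arbitrary proper left invariant metric $d_G$ by means of a single Lipschitz-type inequality and then push each clause of the BCP property through it. Fix such a $d_G$ and put $M := \max_{a \in \mathcal A} \norm{a}{d_G}$, which is finite since $\mathcal A$ is a finite set. First I would record the elementary estimate
\[
d_G(g,h) \le M \cdot \da(g,h) \qquad \text{whenever } \da(g,h) < \infty,
\]
obtained by taking a $\da$-geodesic from $g$ to $h$, whose $\da(g,h)$ edges are labeled by elements of $\mathcal A$, and applying left invariance together with the triangle inequality for $d_G$ edge by edge. There is no useful bound in the opposite direction — indeed $\da$ may be infinite where $d_G$ is finite — but the point is that this one-sided inequality is oriented correctly for both BCP clauses.

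Given $\lambda \ge 1$ and $c \ge 0$, let $a_{\mathcal A} = a(\lambda,c,\da)$ be the BCP constant for $\da$, and I would take $a_G := M a_{\mathcal A}$. Let $p,q$ be $(\lambda,c)$-quasigeodesics without backtracking in $\G$ with $p_-=q_-$ and $p_+=q_+$. For clause 1), if an $H_i$-component $s$ of $p$ has $d_G(s_-,s_+) > a_G$, then the displayed estimate rules out $\da(s_-,s_+) \le a_{\mathcal A}$ (the case $\da(s_-,s_+)=\infty$ being automatic), so $\da(s_-,s_+) > a_{\mathcal A}$ and the $\da$-BCP property produces an $H_i$-component $t$ of $q$ connected to $s$. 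For clause 2), connected components $s,t$ satisfy $\da(s_-,t_-), \da(s_+,t_+) < a_{\mathcal A}$ by the $\da$-BCP property; these values are finite, so the estimate gives $d_G(s_-,t_-), d_G(s_+,t_+) < M a_{\mathcal A} = a_G$. Since $d_G$ was an arbitrary proper left invariant metric, this proves the corollary; alternatively one could verify the claim for a single convenient metric and appeal to Corollary \ref{BCPindep}.

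I do not expect a serious obstacle: the statement really is ``immediate by an elementary argument'' in the spirit of Corollary \ref{BCPindep}. The one delicate point is the possibility $\da(s_-,s_+)=\infty$ in clause 1), which should be read as the \emph{favorable} case, since it instantly forces the hypothesis of the $\da$-clause. The other thing to watch is that the single inequality $d_G \le M\da$ be used in the matching direction for each clause — contrapositively in clause 1), where ``$d_G$ large'' is converted into ``$\da$ large'', and directly in clause 2), where ``$\da$ bounded'' is converted into ``$d_G$ bounded''.
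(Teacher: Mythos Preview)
Your proof is correct and is precisely the elementary argument the paper has in mind; the paper states the corollary without proof, treating it as immediate in the same spirit as Corollary~\ref{BCPindep}. Your one-sided Lipschitz bound $d_G \le M\,\da$ and the observation that it is oriented correctly for both BCP clauses (contrapositively for clause~1, directly for clause~2, with $\da=\infty$ being the favorable case) is exactly what is needed.
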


The second definition of relative hyperbolicity is due to Farb
\cite{Farb}, which will be used in establishing relative
hyperbolicity of groups in Section 3.

\begin{defn}\label{FarbRH} (Farb Definition)
A countable group $G$ is \textit{hyperbolic relative to $\mathbb H$
in the sense of Farb} if the Caylay graph $\G$ is hyperbolic and the
pair $\GH$ satisfies the BCP property.
\end{defn}

As observed in Example \ref{BCPexample}, BCP property is not
well-defined with respect to relative generating sets. But the
following lemma states that for a given finite relative generating
set, we can always find a finite subset $\Sigma$ such that $\GH$
satisfies BCP property with respect to $d_\Sigma$.

\begin{lem} \label{BCPOmega}
Suppose $\GH$ is relatively hyperbolic in the sense of Osin and $X$
is a finite relative generating set for $\GH$.  Then there exists a
finite set $\Sigma \subset G$ such that then $\GH$ satisfies BCP
property with respect to $d_\Sigma$.
\end{lem}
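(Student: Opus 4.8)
The plan is to set $\Sigma = X \cup \Omega$, where $\kappa$ and the finite set $\Omega$ are those provided by Lemma \ref{Omega}; with this choice $d_\Sigma$ coincides with the partial distance function $\dxo$ occurring in the stability Lemma \ref{stableqg}. The decisive feature of this $\Sigma$ is the trivial inclusion $\Omega_i = \Omega \cap H_i \subset \Sigma$, which gives $d_\Sigma(g,h) \le d_{\Omega_i}(g,h)$ whenever $g^{-1}h \in H_i$. Consequently every bound on an $\Omega_i$-distance produced by the isolated-components inequality (\ref{keyinequality}) transfers verbatim to a bound on $d_\Sigma$. This is exactly the feature that $d_X$ lacks: by Example \ref{BCPexample}, $d_X$ may be infinite between two points of a common coset, whereas $d_\Sigma$ is always dominated by $d_{\Omega_i}$ there.

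The engine of the proof is the observation that an $H_i$-component $s$ has short endpoints, $\len(s) \le \lambda + c$, because $s_-$ and $s_+$ are joined by a single $\mathcal H$-edge and so $d_{X\cup\mathcal H}(s_-,s_+) \le 1$; the same holds for any two connected components. Fix $(\lambda,c)$-quasigeodesics $p,q$ without backtracking with $p_- = q_-$ and $p_+ = q_+$, let $\epsilon = \epsilon(\lambda,c)$ be as in Lemma \ref{stableqg}, and let $M = \max_{\omega \in \Omega} \norm{\omega}{X\cup\mathcal H}$, so that any $\Sigma$-path of length $k$ rewrites as a path in $\G$ of length at most $Mk$. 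For condition $(2)$, suppose $s$ (a component of $p$) and $t$ (a component of $q$) are connected $H_i$-components joined by an $\mathcal H$-edge $e$ from $s_-$ to $t_-$. By Lemma \ref{stableqg} there is a phase vertex $v$ of $q$ with $d_\Sigma(s_-,v) < \epsilon$; rewriting the realizing path over $X\cup\mathcal H$ yields a path $\alpha$ in $\G$ from $s_-$ to $v$ of length at most $M\epsilon$. Since $d_{X\cup\mathcal H}(v,t_-) \le M\epsilon + 1$, the quasigeodesic subpath of $q$ between $v$ and $t_-$ has length at most $\lambda(M\epsilon+1)+c$, so the cycle obtained by concatenating $\alpha$, this subpath, and $e^{-1}$ has length bounded by a constant. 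Because $q$ has no backtracking, $t$ is the unique $H_i$-component of $q$ in its coset, and by connecting $s$ to $t$ through whichever endpoint keeps $t$ outside the subpath of $q$ used, the connecting edge is an isolated component of the cycle. Lemma \ref{Omega} then bounds $d_{\Omega_i}(s_-,t_-)$, hence $d_\Sigma(s_-,t_-)$, by a constant; the estimate for $d_\Sigma(s_+,t_+)$ is symmetric.

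Condition $(1)$ is the contrapositive: assuming the $H_i$-component $s$ of $p$ is connected to no component of $q$, I bound $d_\Sigma(s_-,s_+)$. Let $v_-,v_+$ be phase vertices of $q$ tracking $s_-,s_+$, so that $d_{X\cup\mathcal H}(v_-,v_+) \le 2M\epsilon + 1$ and the intervening subpath of $q$ has bounded length. Splicing the two short tracking paths, this subpath, and $s^{-1}$ produces a cycle of bounded length in which $s$ is isolated, by the non-connection hypothesis. Lemma \ref{Omega} again bounds $d_\Sigma(s_-,s_+)$ by a constant, and taking $a$ to be the larger of the two constants yields the BCP property with respect to $d_\Sigma$; Corollary \ref{BCPindep2} then passes to any proper left invariant metric.

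The main obstacle is keeping the distinguished component ($e$ in $(2)$, $s$ in $(1)$) genuinely isolated in the bounded cycle, since the tracking paths --- especially after their $\Omega$-letters are rewritten over $\mathcal H$ --- may a priori carry extra $\mathcal H$-edges lying in the relevant coset. I expect to dispose of this by choosing the tracking paths to realize $d_\Sigma$ and sliding their endpoints past any initial intra-coset edges, thereby absorbing a bounded, $\epsilon$-controlled error into the final constants; when even this leaves the distinguished edge connected only to the finitely many bounded-length components it meets, one applies (\ref{keyinequality}) to the reduced cycle. The remaining ingredients --- that Osin-relative hyperbolicity is unaffected by enlarging $X$ to $\Sigma$, and the routine comparisons among $d_X$, $d_\Sigma$, $d_{\Omega_i}$, and $d_{X\cup\mathcal H}$ --- are straightforward.
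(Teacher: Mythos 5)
The difficulty you flag in your last paragraph is the real obstruction, and your proposed fix does not close it. With $\Sigma = X\cup\Omega$, the tracking paths furnished by Lemma \ref{stableqg} realize $\dxo$-distances and are therefore labeled by letters of $X\cup\Omega$; to get a cycle in $\G$ to which Lemma \ref{Omega} applies, you must rewrite each $\Omega$-letter as a word over $X\cup\mathcal H$, and the resulting paths $\alpha,\beta$ may then acquire $H_i$-components lying in the same coset as the distinguished component, destroying isolation. Your repair --- sliding endpoints past intra-coset edges and absorbing an ``$\epsilon$-controlled error'' --- requires bounding $d_\Sigma$ (or $d_{\Omega_i}$) between $s_\pm$ and the endpoints of the offending components. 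But those endpoints are reached from $s_\pm$ only along subpaths of $\alpha$ containing $\mathcal H$-letters that need not lie in $\langle X\cup\Omega\rangle$, so the relevant $d_\Sigma$-distances can be infinite; and traversing an intra-coset component of $\alpha$ cannot be charged to $d_{\Omega_i}$ by any isolated-cycle argument, because the edge closing such a cycle is by construction connected to the very component it closes. So the error is not actually controlled, and the application of (\ref{keyinequality}) to the ``reduced cycle'' is not justified.

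The paper's proof avoids this entirely by applying Lemma \ref{Omega} \emph{twice}: once to produce $\Omega$ (and hence Lemma \ref{stableqg} with $\dxo$), and a second time to the enlarged relative generating set $\hat X = X\cup\Omega$, yielding a new finite set --- and it is this second set that serves as the $\Sigma$ of the statement, not $X\cup\Omega$. In $\Ghat$ the tracking paths are labeled by letters of the generating set $\hat X$, hence contain no $\mathcal H$-components at all; the distinguished component is automatically isolated in the bounded cycle (the only other components come from the subpath of $q$, which are excluded by hypothesis), and the second-level inequality bounds $d_{\Sigma_i}(s_-,s_+)$ directly. This ``promote $\Omega$ to generators and reapply Lemma \ref{Omega}'' step is the idea missing from your argument; with $\Sigma=X\cup\Omega$ and a single application of Lemma \ref{Omega} I do not see how to make the isolation step go through.
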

\begin{proof}
Let $\Omega$ be the finite set given by Lemma \ref{Omega} for $\G$.
We take a new finite relative generating set $\hat X := X \cup
\Omega$. Using Lemma \ref{Omega} again, we obtain a finite set
$\Sigma$ and constant $\mu > 1$ such that the inequality
(\ref{keyinequality}) holds in $\Ghat$.

We now verify BCP property 1). Let $p, q$ be $(\lambda,
c)$-quasigeodesics without backtracking in $\G$. Since $\hat X$ is
finite, the embedding $\G \hookrightarrow \Ghat$ is a
quasi-isometry. Regarded as paths in $\Ghat$, $p,q$ are
$(\lambda',c')$-quasigeodesics without backtracking in $\Ghat$, for
some constants $\lambda' \ge 1, c'\ge 0$ depending on $\hat X$.

Let $\epsilon = \epsilon(\lambda,c)$ be the constant given by Lemma
\ref{stableqg}. Set $$a = \mu (\lambda' +1) (2 \epsilon + 1) +
c'\mu.$$

We claim that $a$ is the desired constant for the BCP property of
$\GH$. If not, we suppose there exists an $H_i$-component $s$ of $p$
such that $d_\Sigma(s_-, s_+)> a$ and no $H_i$-component of $q$ is
connected to $s$.

By Lemma \ref{stableqg}, there exist phase vertices $u, v$ of $q$
such that $\dxo(s_-,u) < \epsilon, \dxo(s_-,v) < \epsilon.$ Thus by
regarding $p, q$ as paths in $\Ghat$, there exist paths $l$ and $r$
labeled by letters from $\Omega$ such that $l_-=e_-,l_+=u,r_-=e_+,$
and $r_+=v$. We consider the cycle $c := e r [u, v]_q^{-1}l^{-1}$ in
$\Ghat$, where $[u,v]_q$ denotes the subpath of $q$ between $u$ and
$v$. Since $[u,v]_q$ is a $(\lambda',c')$-quasigeodesic, we compute
$\len(c)$ by the triangle inequality and have $$\len(c) \le
(\lambda' +1) (2 \epsilon + 1) + c'.$$ Obviously $e$ is an isolated
$H_i$-component of $c$. Using Lemma \ref{Omega} for the cycle $c$ in
$\Ghat$, we have $d_{\Sigma}(e_-,e_+) < \mu \len(c) < a$. This is a
contradiction.

Therefore, BCP property 1) is verified with respect to $d_\Sigma$.
BCP property 2) can be proven in a similar way.
\end{proof}

We conclude this subsection with the following theorem which is
proven in \cite{Osin} for finitely generated relatively hyperbolic
groups.
\begin{thm} \label{FarbOsin}
The pair $\GH$ is relatively hyperbolic in the sense of Farb if and
only if it is relatively hyperbolic in the sense of Osin.
\end{thm}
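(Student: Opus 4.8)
The plan is to prove both directions of the equivalence in Theorem~\ref{FarbOsin}.

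\textbf{Osin $\Rightarrow$ Farb.} Suppose $\GH$ is relatively hyperbolic in the sense of Osin. I must establish two things: that the relative Cayley graph $\G$ is hyperbolic, and that $\GH$ satisfies the BCP property. Hyperbolicity of $\G$ is precisely the content of Osin's work (it follows from the linear isoperimetric inequality for the relative Dehn function, via the fact that a linear relative Dehn function is equivalent to hyperbolicity of the Cayley complex skeleton $\G$); I would cite this directly from \cite{Osin} rather than reprove it. For the BCP property, Lemma~\ref{BCPOmega} already furnishes a finite set $\Sigma$ such that $\GH$ satisfies BCP with respect to $d_\Sigma$. Then Corollary~\ref{BCPindep2} upgrades this to BCP with respect to any proper left invariant metric, which by our convention (Corollary~\ref{BCPindep}) is exactly the BCP property appearing in the Farb Definition~\ref{FarbRH}. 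Hence $\GH$ is relatively hyperbolic in the sense of Farb.

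\textbf{Farb $\Rightarrow$ Osin.} Conversely, assume $\G$ is hyperbolic and $\GH$ satisfies BCP. I would follow Osin's argument showing that these conditions force a finite relative presentation with linear relative Dehn function. The strategy is to bound the relative area of an arbitrary relator-cycle $c$ in $\G$ in terms of its relative length. One triangulates $c$ using geodesics in the hyperbolic graph $\G$; the hyperbolicity (thinness of triangles) controls the $X$-letters, while BCP controls how the $H_i$-components of geodesics penetrate cosets, ensuring that the total contribution of peripheral subwords is linearly bounded. Combining these yields a linear bound on the number of cells needed to fill $c$ relative to $\mathbb H$, which is the linear relative isoperimetric inequality, i.e. the Osin condition.

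\textbf{Main obstacle and remark on scope.} The theorem is stated as known for finitely generated groups (\cite{Osin}), so the genuine content here is verifying that the arguments persist in the countable setting once BCP is phrased via proper left invariant metrics rather than $d_X$. The hard part is therefore \emph{not} a new idea but the careful bookkeeping in the Farb $\Rightarrow$ Osin direction: one must confirm that Osin's estimates, which originally used the finiteness of $X$, go through when peripheral penetration is measured by an arbitrary proper metric $d_G$. Here Corollary~\ref{BCPindep2} is the key technical pivot, letting me pass freely between a convenient finite partial distance function $d_\Sigma$ (on which the combinatorial estimates are clean) and the metric-free formulation demanded by the definitions. Since the paper's stated goal is merely to record the equivalence with the corrected BCP definition, I would present the Osin $\Rightarrow$ Farb direction in full (as above, it is essentially immediate from the preceding lemmas) and indicate that the reverse direction is Osin's original argument, unchanged except for the substitution of proper metrics justified by Corollary~\ref{BCPindep}.
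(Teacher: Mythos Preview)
Your proposal is correct and follows essentially the same approach as the paper: for Osin $\Rightarrow$ Farb you invoke hyperbolicity of $\G$ from \cite{Osin} and obtain BCP via Lemma~\ref{BCPOmega} plus Corollary~\ref{BCPindep2}, and for Farb $\Rightarrow$ Osin you defer to Osin's original argument after observing that the substitution of a proper left invariant metric for the word metric is harmless. The paper is slightly more precise in the second direction, pinpointing Lemma~6.12 of \cite{Osin} as the \emph{only} place where the word metric is used and noting that this step goes through verbatim for any proper left invariant metric; you might sharpen your remark to this effect rather than appealing to Corollary~\ref{BCPindep2}, which is really a tool for the other direction.
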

\begin{proof}
By Corollary \ref{BCPindep2}, BCP property of $\GH$ follows from
Lemma \ref{BCPOmega}. The hyperbolicity of relative Cayley graph
$\G$ is proven in \cite[Corollary 2.54]{Osin}. Thus, $\GH$ is
relatively hyperbolic in the sense of Farb.

The sufficient part is proven in the appendix of Osin \cite{Osin}
for finitely generated relatively hyperbolic groups. We remark that
the only argument involved to use word metrics with respect to
finite generating sets is in the proof of Lemma 6.12 in \cite{Osin}.
But Osin's argument also works for any proper left invariant metric.
Hence, Osin's proof is through for the countable case.
\end{proof}

\subsection{Relatively quasiconvex subgroups}
In this subsection, we shall explicitly describe a quasi-isometric
map between a relatively quasiconvex subgroup to the ambient
relatively hyperbolic group.

The existence of such a quasi-isometric map is first proven in
\cite[Theorem 10.1]{Hru}, but whose statement or proof does not tell
explicitly how a geodesic is mapped. Using an argument of Short on
the geometry of relative Cayley graph, we carry on a more careful
analysis to construct the quasi-isometric map explicitly.

As a byproduct in the course of the construction, we are able to
produce a new proof of the relative hyperbolicity of relatively
quasiconvex subgroups. This was an open problem in \cite{Osin} and
is firstly answered by Hruska \cite{Hru} using different methods.
During the preparation of this thesis, E. Martinez-Pedroza and D.
Wise \cite{MarWise} gave another elementary and self-contained proof
of this result.

\begin{defn} \cite{Hru}
Suppose $\GH$ is relatively hyperbolic and $d$ is some proper left
invariant metric on $G$. A subgroup $\Gamma$ of $G$ is called
\textit{relatively $\sigma$-quasiconvex} with respect to
$\mathbb{H}$ if there exists a constant $\sigma =\sigma(d) > 0$ such
that the following condition holds. Let $p$ be an arbitrary geodesic
path in $\G$ such that $p_-, p_+ \in \Gamma$. Then for any vertex $v
\in p$, there exists a vertex $w \in \Gamma$ such that $d(u,w) <
\sigma$.
\end{defn}

\begin{cor} \label{QCIndep0} \cite{Hru}
Relative quasiconvexity is independent of the choice of proper left
invariant metrics.
\end{cor}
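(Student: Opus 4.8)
The plan is to reduce everything to a single comparison lemma relating two proper left invariant metrics. The key point — and the conceptual heart of the statement — is that although two proper left invariant metrics $d_1,d_2$ on a countable group $G$ need not be quasi-isometric (there is no Lipschitz control between them), they are nonetheless \emph{coarsely comparable} in the weak sense that a set is $d_1$-bounded if and only if it is finite if and only if it is $d_2$-bounded. Since relative quasiconvexity is a ``uniformly bounded neighborhood'' condition rather than one involving any metric distortion, this weak comparability is exactly what is needed.

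Concretely, I would first prove the following comparison statement: for any $R > 0$ there exists $R' = R'(R, d_1, d_2) > 0$ such that $d_1(g,h) < R$ implies $d_2(g,h) \le R'$ for all $g,h \in G$. By left invariance, $d_1(g,h) = \norm{g^{-1}h}{d_1}$ and $d_2(g,h) = \norm{g^{-1}h}{d_2}$, so it suffices to bound $\norm{x}{d_2}$ over the set $\{x \in G : \norm{x}{d_1} < R\}$. This set is the $d_1$-ball of radius $R$ about the identity, which is bounded, hence finite by properness of $d_1$; a finite set has finite $d_2$-diameter, so $R' := \max\{\norm{x}{d_2} : \norm{x}{d_1} < R\}$ is the desired constant. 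By symmetry the analogous statement holds with the roles of $d_1$ and $d_2$ reversed.

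I would then transport the quasiconvexity constant. Suppose $\Gamma$ is relatively $\sigma$-quasiconvex with respect to $d_1$, and let $d_2$ be any other proper left invariant metric. Given an arbitrary geodesic $p$ in $\G$ with $p_-, p_+ \in \Gamma$ and an arbitrary vertex $v \in p$, the $d_1$-hypothesis supplies $w \in \Gamma$ with $d_1(v,w) < \sigma$. Applying the comparison statement with $R = \sigma$ yields $d_2(v,w) \le \sigma' := R'(\sigma, d_1, d_2)$. Since $\sigma'$ depends only on $\sigma$ and the two metrics — not on $p$ or $v$ — the subgroup $\Gamma$ is relatively $\sigma'$-quasiconvex with respect to $d_2$, and the converse direction is identical.

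I do not expect any serious obstacle here; the only subtle point, worth flagging explicitly, is precisely that one must \emph{not} attempt to argue via a quasi-isometry between $(G, d_1)$ and $(G, d_2)$, since no such map need exist for arbitrary proper left invariant metrics. The argument succeeds because the definition of relative quasiconvexity only records membership in a bounded neighborhood of $\Gamma$, a property preserved by any bornologous map, and properness guarantees that the identity map $(G,d_1) \to (G,d_2)$ is bornologous in both directions.
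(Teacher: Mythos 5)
Your argument is correct and is precisely the elementary comparison the paper has in mind (it cites Hruska for this corollary and gives no proof, just as it calls the analogous Corollary \ref{BCPindep} ``immediate by an elementary argument''): properness forces bounded sets to be finite, so balls for one proper left invariant metric sit inside balls for any other, and the quasiconvexity constant transports. No discrepancy with the paper's approach.
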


In fact, when proving relative quasiconvexity, we usually verify the
relative quasiconvexity with respect to some partial distance
function, as indicated in the following corollary.  See an
application of this corollary in the proof of Proposition
\ref{PIISQC}.

\begin{cor} \label{QCIndep}
Suppose $\GH$ is relatively hyperbolic and $\Gamma$ is a subgroup of
$G$. Let $\mathcal A \subset G$ be a finite set and $\da$ the
partial distance function with respect to $\mathcal A$. If there
exists a constant $\sigma = \sigma(\da) > 0$ such that for any
geodesic $p$ with endpoints at $\Gamma$, the vertex set of $p$ lies
in $\sigma$-neighborhood of $\Gamma$ with respect to $\da$. Then
$\Gamma$ is relatively quasiconvex.
\end{cor}

We are going to construct the quasi-isometric map. The relatively
finitely generatedness of $\Gamma$ in Lemma \ref{qcembed} is also
proved by E. Martinez-Pedroza and D. Wise \cite{MarWise}. In
particular, it also follows from a more general result of
Gerasimov-Potyagailo \cite{GePo3}, which states that 2-cocompact
convergence groups are finitely generated relative to a set of
maximal parabolic subgroups.

\begin{lem} \label{qcembed}
Suppose $\GH$ is relatively hyperbolic. Let $\Gamma < G$ be
relatively $\sigma$-quasiconvex. Then $\Gamma$ is finitely generated
by a finite subset $Y \subset G$ with respect to a finite collection
of subgroups
\begin{equation} \label{K}
\mathbb{K} = \{H_i^g \cap \Gamma: |g|_d < \sigma, i \in I, \sharp \;
H_i^g \cap \Gamma = \infty\}.
\end{equation}
Moreover, $X$ can be chosen such that $Y\subset X$ and there is a
$\Gamma$-equivariant quasi-isometric map $\iota: \K \to \G$.
\end{lem}

\begin{proof}
The argument is inspired by the one of \cite[Lemma 4.14]{Osin}.

For any $\gamma \in \Gamma$, we take a geodesic $p$ in $\G$ with
endpoints 1 and $\gamma$. Suppose the length of $p$ is $n$. Let
$g_0=1$, $g_1, \ldots, g_n=\gamma$ be the consecutive vertices of
$p$. By the definition of relative quasiconvexity, for each vertex
$g_i$ of $p$, there exists an element $\gamma_i$ in $\Gamma$ such
that $d(g_i,\gamma_i) < \sigma$.

Denote by $x_i$ the element $\gamma^{-1}_i g_i$, and by $e_{i+1}$
the edge of $p$ going from $g_i$ to $g_{i+1}$. Obviously we have
$\gamma_{i+1} = \gamma_i x_i \lab(e_{i+1}) x^{-1}_{i+1}$.

Set $\kappa = \max\{|x|_d: x \in X\}$. Then $\kappa$ is finite, as
$X$ is finite. Let $Z_0 = \{\gamma \in \Gamma: \norm{\gamma}{d} \leq
2\sigma + \kappa\}$ and $Z_{x,y,i} = \{xhy^{-1} : \; h \in H_i\}
\cap \Gamma$. Since the metric $d$ is proper, the set $B_\sigma :=
\{g \in G: \norm{g}{d} \leq \sigma\}$ is finite.

For simplifying notations, we define  sets $$\Pi = \{(x,y,i):
x,y \in B_\sigma, i \in I \}$$ and
$$\Xi = \{(x,y,i): \sharp \; Z_{x,y,i} = \infty, x,y \in
B_\sigma, i \in I\}.$$

If $e_{i+1}$ is an edge labeled by a letter from $X$, then the
element $x_i \lab(e_{i+1}) x^{-1}_{i+1}$ belongs to $Z_0$.  If
$e_{i+1}$ is an edge labeled by a letter from $H_k$, then $x_i
\lab(e_{i+1}) x^{-1}_{i+1}$ belongs to $Z_{x_i,x_{i+1},k}$. By the
construction, we obtain that the subgroup $\Gamma$ is also generated by
the set
\[
Z := Z_0 \cup \left(\cup_{(x,y,i) \in \Pi} Z_{x, y, i}\right).
\]

For each $(x,y,i) \in \Pi$, if $Z_{x,y,i}$ is nonempty, then we take
an element of the form $x h_i y^{-1} \in Z_{x,y,i}$ for some $h_i
\in H_i$. Denote by $Z_1$ the union of all such elements
$\bigcup_{(x,y,i) \in \Pi} x h_i y^{-1}$. Note that $Z_1 \subset Z$.
Then we have that $\Gamma$ is generated by the set
\[
\hat{Z} := Y \cup \left( \cup_{(z,z,i) \in \Xi} Z_{z, z, i}\right),
\]
where $Y := Z_0 \cup Z_1 \cup \left( \bigcup_{(z, z, i) \in \Pi
\setminus \Xi} Z_{z, z, i}\right)$. Indeed,
for each triple $(x,y,i) \in \Pi$, we have
$$
Z_{x,y,i} = Z_{x,x,i} \cdot xh_i y^{-1}, \; \text{where} \; xh_i
y^{-1} \in Z_1.
$$
On the other direction, it is obvious that $\hat{Z} \subset Z$.

Let $\hat X = X \cup Y \cup B_\sigma$. By the above construction, we
define a $\Gamma$-equivariant map $\phi$ from $\mathscr{G}(\Gamma,
Z)$ to $\mathscr{G}(G, \hat X \cup \mathcal H)$ as follows. For each
vertex $\gamma \in V(\mathscr{G}(\Gamma, Z))$, $\phi(\gamma) =
\gamma$. For each edge $[\gamma,s] \in E(\mathscr{G}(\Gamma, Z))$,
if $s \in Z_0$, then $\phi([\gamma, s])=[\gamma, s]$; if $s \in
Z_{x,y,i}$ for some $(x, y, i) \in \Xi$, then $s = x t y^{-1}$ for
some $t \in H_i$ and we set $\phi([\gamma, s])=[\gamma, x][\gamma x,
t][\gamma x t, y^{-1}]$.

For any  $\gamma_1,\gamma_2 \in V(\mathscr{G}(\Gamma, Z))$,
it is easy to see that $d_{\hat X \cup \mathcal H}(\gamma_1,\gamma_2) <
3d_Z(\gamma_1,\gamma_2)$. For the other direction, we take a
geodesic $q$ in $\G$ with endpoints $\gamma_1,\gamma_2$.

Since $\hat X$ is finite, there exist constants $\lambda \geq 1,c
\geq 0$ depending only on $\hat X$, such that the graph embedding
$\G \hookrightarrow \mathscr{G}(G, \hat X \cup \mathcal H)$ is a
$G$-equivariant ($\lambda,c$)-quasi-isometry. Thus, $q$ is a
($\lambda,c$)-quasigeodesic in $\mathscr{G}(G, \hat X \cup \mathcal
H)$, i.e.
\[
\dxh(\gamma_1,\gamma_2) < \lambda d_{\hat X \cup \mathcal
H}(\gamma_1,\gamma_2) + c.
\]

Since $q$ is a geodesic in $\G$ ending at $\Gamma$, we can apply the
above analysis to $q$ and obtain that $d_Z(\gamma_1,\gamma_2) <
\dxh(\gamma_1,\gamma_2)$. Then we have
\[
d_Z(\gamma_1,\gamma_2) < \lambda d_{\hat X \cup \mathcal
H}(\gamma_1,\gamma_2) +c.
\]
Therefore, $\phi$ is a $\Gamma$-equivariant quasi-isometric map.

We now claim the subgraph embedding $\imath:\mathscr{G}(\Gamma, \hat
Z) \hookrightarrow \mathscr{G}(\Gamma,Z)$ is a $\Gamma$-equivariant
$(2,0)$-quasi-isometry. This is due to the following observation:
every element of $Z$ can be expressed as a word of $\hat{Z}$ of
length at most 2.

Finally, we obtain a $\Gamma$-equivariant quasi-isometric map $\iota
:= \phi \cdot \imath$ from $\K$ to $\mathscr{G}(G,\hat X \cup
\mathcal H)$.
\end{proof}

\begin{rem} \label{nonconj}
Eliminating redundant entries of $\mathbb K$ such that all entries
of $\mathbb K$ are non-conjugate in $\Gamma$, we keep the same
notation $\mathbb K$ for the reduced collection. It is easy to see
the construction of the quasi-isometric map $\iota: \K \to \G$ works
for the reduced $\mathbb K$.
\end{rem}

In the following of this subsection, we assume the
$\Gamma$-equivariant quasi-isometric map $\iota: \K \to \G$ is the
one constructed in Lemma \ref{qcembed}. In particular $X$ is the
suitable chosen relative generating set such that $Y \subset X$.

\begin{lem} \label{qcpair}
Suppose $\GH$ is relatively hyperbolic. Let $\Gamma < G$ be
relatively $\sigma$-quasiconvex. Then the quasi-isometric map
$\iota: \K \to \G$ sends distinct peripheral $\mathbb K$-cosets of
$\Gamma$ to a $d$-distance $\sigma$ from distinct peripheral
$\mathbb H$-cosets of $G$.
\end{lem}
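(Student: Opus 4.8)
The plan is to exploit that the map $\iota=\phi\cdot\imath$ of Lemma \ref{qcembed} is the identity on vertices (since $\phi(\gamma)=\gamma$ and $\imath$ is a subgraph embedding), so that a peripheral $\mathbb K$-coset $\gamma K$ is carried to the same vertex set $\gamma K\subset G$. For each entry $K$ of the reduced collection $\mathbb K$ (Remark \ref{nonconj}) I fix a description $K=H_i^g\cap\Gamma$ with $\norm{g}{d}<\sigma$ and $\sharp K=\infty$ coming from (\ref{K}), and I associate to $\gamma K$ the peripheral $\mathbb H$-coset $\gamma gH_i$. Two things then have to be shown: that $\gamma K$ lies in the $\sigma$-neighbourhood of $\gamma gH_i$, and that the assignment is injective on cosets.

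For the distance bound I would write an arbitrary element of $\gamma K$ as $\gamma ghg^{-1}$ with $h\in H_i$; then $\gamma gh\in\gamma gH_i$, and left invariance of $d$ gives $d(\gamma ghg^{-1},\gamma gh)=d(g^{-1},1)=\norm{g}{d}<\sigma$. Before turning to injectivity I must check the assignment is well defined, i.e.\ independent of the representative $\gamma$ and of the chosen description of $K$. Independence of $\gamma$ is immediate, since $kg=gh_0$ whenever $k=gh_0g^{-1}\in K$, so $\gamma kgH_i=\gamma gH_i$. If $K=H_i^g\cap\Gamma=H_j^{g'}\cap\Gamma$ are two descriptions of the same infinite entry, then $H_i^g\cap H_j^{g'}$ is infinite, so Lemma \ref{periphintersect} forces $i=j$ and $gH_i=g'H_i$; here I use that an infinite peripheral subgroup is self-normalizing, which is itself a consequence of Lemma \ref{periphintersect}(1).

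The substantive step is injectivity. Suppose $\gamma_1K_1$ and $\gamma_2K_2$ map to the same $\mathbb H$-coset, so $\gamma_1g_1H_i=\gamma_2g_2H_j$. Equality of these cosets forces the \emph{subgroups} $H_i=H_j$, hence $i=j$ by Lemma \ref{periphintersect}(2), and setting $\gamma:=\gamma_2^{-1}\gamma_1\in\Gamma$ the relation becomes $\gamma g_1=g_2h$ for some $h\in H_i$. A short computation then yields $\gamma H_i^{g_1}\gamma^{-1}=H_i^{g_2}$, and intersecting with $\Gamma$ gives $\gamma K_1\gamma^{-1}=K_2$, so $K_1$ and $K_2$ are conjugate in $\Gamma$; since the entries of the reduced $\mathbb K$ are pairwise non-conjugate, $K_1=K_2=:K$. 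Consequently $H_i^{g_1}\cap H_i^{g_2}\supseteq K$ is infinite, so $H_i^{g_1}=H_i^{g_2}=:H$ by Lemma \ref{periphintersect}(1); then $\gamma$ normalizes the infinite conjugate $H$, whence $\gamma\in N_G(H)\cap\Gamma=H\cap\Gamma=K$, and therefore $\gamma_1K=\gamma_2K$. I expect the main obstacle to be precisely this passage from ``conjugate in $\Gamma$'' to ``equal coset'', since it is where both the non-conjugacy of the reduced entries of $\mathbb K$ and the self-normalizing property of infinite peripheral subgroups must be brought to bear.
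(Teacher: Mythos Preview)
Your proposal is correct and follows essentially the same route as the paper's proof: both associate to $\gamma K$ with $K=H_i^g\cap\Gamma$ the $\mathbb H$-coset $\gamma gH_i$, verify the $\sigma$-closeness directly, and for injectivity pass from $\gamma_1 g_1 H_i=\gamma_2 g_2 H_i$ to conjugacy of $K_1$ and $K_2$ in $\Gamma$, invoke the non-conjugacy of entries in the reduced $\mathbb K$, and finish via the self-normalizing property of infinite parabolic conjugates (Lemma \ref{periphintersect}). Your version is in fact a bit more careful---you check well-definedness of the assignment and spell out the normalizer step explicitly, whereas the paper packages that last step into an auxiliary claim stated earlier in the proof---but the argument is the same.
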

\begin{proof}
Taking into account Lemma \ref{qcembed} and Remark \ref{nonconj}, we
suppose all entries of $\mathbb{K}$ are non-conjugate. We continue
the notations in the proof of Lemma \ref{qcembed}.

By the construction of $\phi$, we can see the map $\phi$ sends the
subset $g Z_{x,x,i}$ to a uniform $d$-distance $\sigma$ from
the peripheral coset $g x H_i$ of $G$ for each $(x,x,i) \in \Xi$ and $g
\in G$. Here $\sigma$ is the quasiconvex constant associated to
$\Gamma$. Observe that $\imath:\mathscr{G}(\Gamma, \hat Z)
\hookrightarrow \mathscr{G}(\Gamma,Z)$ is an embedding. Therefore,
we have the quasi-isometric map $\iota = \phi \cdot \imath$ maps
each peripheral $\mathbb K$-coset to a uniform distance from a
peripheral $\mathbb H$-coset.

We now show the ``injectivity'' of $\iota$ on $\mathbb K$-cosets. Let
$\gamma H_i^g \cap \Gamma$, $\gamma' H_{i'}^{g'} \cap \Gamma$ be
distinct peripheral $\mathbb K$-cosets of $\Gamma$, where $\gamma,
\gamma' \in \Gamma$ and $H_i^g \cap \Gamma, H_{i'}^{g'} \cap \Gamma
\in \mathbb K$.

Using Lemma \ref{periphintersect}, it is easy to deduce that if
$\gamma (H_i^g \cap \Gamma) \gamma^{-1} \cap (H_{i'}^{g'} \cap
\Gamma)$ is infinite, then $i = i'$ and $\gamma \in H_i^g \cap
\Gamma$.

It is seen from the above discussion that there is a uniform
constant $\sigma > 0$, such that $\iota(\gamma H_i^g \cap \Gamma)
\subset N_\sigma(\gamma g H_i)$ and $\iota(\gamma' H_{i'}^{g'} \cap
\Gamma) \subset N_\sigma(\gamma' g' H_{i'})$. It suffices to show
that $\gamma g H_i \neq \gamma' g' H_{i'}$.

Without loss of generality, we assume that $i = i'$. Suppose, to the
contrary, that $\gamma g H_i = \gamma' g' H_i$. Then we have $\gamma
g = \gamma' g' h$ for some $h \in H_i$. It follows that $\gamma g
H_i g^{-1} \gamma^{-1} = \gamma' g' H_i g'^{-1} \gamma'^{-1}$. This
implies that $H_i^{g} \cap \Gamma$ is conjugate to $H_i^{g'} \cap
\Gamma$ in $\Gamma$, i.e. $H_i^{g} \cap \Gamma = (H_i^{g'} \cap
\Gamma)^{\gamma^{-1} \gamma'}$. Since any two entries of $\mathbb
K$ are non-conjugate in $\Gamma$, we have $H_i^{g} \cap \Gamma =
H_i^{g'} \cap \Gamma$. As a consquence, we have $\gamma^{-1} \gamma'
\in H_i^{g} \cap \Gamma$, as $H_i^{g} \cap \Gamma \in \mathbb K$ is
infinite. This is a contradiction, since we assumed $\gamma H_i^g
\cap \Gamma \neq \gamma' H_{i'}^{g'} \cap \Gamma$.

Therefore, $\iota$ sends distinct peripheral $\mathbb K$-cosets of
$\Gamma$ to a uniform distance from distinct peripheral $\mathbb
H$-cosets of $G$.
\end{proof}

Before proceeding to prove the relative hyperbolicity of relatively
quasiconvex subgroups, we need justify the finite collection
$\mathbb K$ in (\ref{K}) as a set of representatives of
$\Gamma$-conjugacy classes of $\mathbb{\hat K}$ in (\ref{Khat}).
\begin{lem} \cite{MarPed2} \label{conjugacy}
Suppose $\GH$ is relatively hyperbolic. Let $\Gamma < G$ be
relatively $\sigma$-quasiconvex. Then the following collection of
subgroups of $\Gamma$
\begin{equation} \label{Khat}
\mathbb{\hat K} = \{H_i^g \cap \Gamma: \sharp \; H_i^g \cap \Gamma =
\infty, g \in G, i \in I\}.
\end{equation}
consists of finitely many $\Gamma$-conjugacy classes. In particular,
$\mathbb K$ is a set of representatives of $\Gamma$-conjugacy
classes of $\mathbb{\hat K}$.
\end{lem}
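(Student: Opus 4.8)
The plan is to prove two things: that $\mathbb K$ is finite, and that every element of $\mathbb{\hat K}$ is $\Gamma$-conjugate to an element of $\mathbb K$; together with the reduction in Remark~\ref{nonconj} this yields that $\mathbb K$ is a set of representatives of the $\Gamma$-conjugacy classes of $\mathbb{\hat K}$. Finiteness of $\mathbb K$ is immediate: since $d$ is proper the ball $B_\sigma=\{g:|g|_d<\sigma\}$ is finite, and $I$ is finite, so only finitely many subgroups $H_i^g\cap\Gamma$ with $g\in B_\sigma$ occur. As $\mathbb K\subseteq\mathbb{\hat K}$ trivially, the whole statement reduces to the surjectivity claim: for every $g\in G$ and $i\in I$ with $P:=H_i^g\cap\Gamma$ infinite, there is $w\in\Gamma$ with $w^{-1}Pw=H_i^{g'}\cap\Gamma$ for some $g'$ satisfying $|g'|_d<\sigma$.

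Fix such a $P$ and write its elements as $\gamma=ghg^{-1}$ with $h\in H_i$; since $h\mapsto ghg^{-1}$ is injective and $d$ is proper, I can choose a sequence $\gamma_n=gh_ng^{-1}\in P$ with $|h_n|_d\to\infty$. In $\G$ I compare, on one hand, a geodesic $q_n$ from $1$ to $\gamma_n$, and on the other hand the path $p_n$ obtained by concatenating a geodesic $[1,g]$, the single $H_i$-edge $e_n=[g,gh_n]$, and a geodesic $[gh_n,\gamma_n]$; after shortcutting connected components I may assume $p_n$ has no backtracking, and for fixed $g$ the $p_n$ are $(\lambda,c)$-quasigeodesics with constants independent of $n$. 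Both $p_n$ and $q_n$ have endpoints $1$ and $\gamma_n$. The component $e_n$ satisfies $d_G(e_{n-},e_{n+})=|h_n|_d$, which exceeds the constant $a=a(\lambda,c)$ of the BCP property (Definition~\ref{BCPdef}, available through Theorem~\ref{FarbOsin}) once $n$ is large. Then BCP~1) provides an $H_i$-component $t$ of the geodesic $q_n$ connected to $e_n$; in particular its initial vertex $t_-$ is a phase vertex of $q_n$ lying on the coset $gH_i$, say $t_-=gh'$ for some $h'\in H_i$.

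Because $q_n$ is a geodesic with both endpoints in $\Gamma$, relative $\sigma$-quasiconvexity furnishes $w\in\Gamma$ with $d(t_-,w)<\sigma$. Setting $g':=w^{-1}t_-$ I get $|g'|_d=d(w,t_-)<\sigma$, while $t_-=gh'$ gives $H_i^{t_-}=H_i^g$, hence $H_i^{t_-}\cap\Gamma=P$ and $w^{-1}Pw=H_i^{g'}\cap\Gamma$; as $P$ is infinite so is $H_i^{g'}\cap\Gamma$, and therefore $H_i^{g'}\cap\Gamma\in\mathbb K$, proving the surjectivity claim. The key structural point is that the constant $a$ is used \emph{only} to force the geodesic $q_n$ to penetrate the coset $gH_i$ (so that the phase vertex $t_-\in gH_i$ exists); the final distance estimate comes purely from quasiconvexity and is exactly $\sigma$, so no auxiliary constant creeps in and $g'$ lands in $B_\sigma$ regardless of how large $|g|$ was. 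I expect the main technical obstacle to be the bookkeeping that makes $p_n$ a no-backtracking quasigeodesic together with the verification that the component $t$ produced by BCP meets $gH_i$ precisely at a phase vertex; once these are in place the conclusion, including the identification of $\mathbb K$ with a transversal via Remark~\ref{nonconj}, is routine.
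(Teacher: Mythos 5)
Your argument is correct and is essentially the proof the paper has in mind: the paper itself only cites Martinez-Pedroza \cite[Proposition 1.5]{MarPed2} ``adapted with our formulation of BCP property,'' and your writeup --- choosing $h_n\in H_i$ with $g h_n g^{-1}\in\Gamma$ and $|h_n|_d\to\infty$, forcing a geodesic $[1,\gamma_n]$ in $\G$ to penetrate the coset $gH_i$ via BCP~1), and then conjugating by the $\sigma$-close element of $\Gamma$ supplied by relative quasiconvexity --- is exactly that adaptation. The points you flag as bookkeeping (removing backtracking from $p_n$, and the fact that the BCP constant may depend on $g$ while the final bound $\sigma$ does not) are real but routine, and you handle them correctly.
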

\begin{proof}
This is proven by adapting an argument of Martinez-Pedroza
\cite[Proposition 1.5]{MarPed2} with our formulation of BCP property
\ref{BCPdef}. We refer the reader to \cite{MarPed2} for the details.
\end{proof}

We are ready to show the relative hyperbolicity of $\KK$. Using
notations in the proof of Lemma \ref{qcembed}, we recall that
$\mathbb K = \{Z_{x,x,i}: (x,x,i) \in \Xi\}$.

\begin{lem} \label{QCISRH}
Suppose $\GH$ is relatively hyperbolic. If $\Gamma < G$ is
relatively $\sigma$-quasiconvex, then $\KK$ is relatively
hyperbolic.
\end{lem}
\begin{proof}
Recall that $\iota$ is the $\Gamma$-equivariant quasi-isometric map
from $\K$ to $\G$. In particular we assumed $Y \subset X$.

We shall prove the relative hyperbolicity of $\Gamma$ using Farb's
definition. First, it is straightforward to verify that $\K$ has the
thin-triangle property, using the quasi-isometric map $\iota$ and
the hyperbolicity of $\G$.

Let $d_G$ be a proper left invariant metric on $G$. Denote by
$d_\Gamma$ the restriction of $d_G$ on $\Gamma$. Obviously
$d_\Gamma$ is a proper left invariant metric on $\Gamma$. We are
going to verify BCP property 1) with respect to $d_\Gamma$, for the
pair $\KK$.  The verification of BCP property 2) is similar.

Let $[\gamma, s]$ be an edge of $\K$, where $s \in Z_{x,x,i}$ for
some $(x,x,i) \in \Xi$. By the construction of $\iota$, $[\gamma,
s]$ is mapped by $\iota$ to the concatenated path $[\gamma,
x][\gamma x, t][\gamma z t, x^{-1}]$, which clearly contains an
$H_i$-component $[\gamma x, t ]$. Note that $\norm{x}{d} \le
\sigma$. To simplify notations, we reindex $\mathbb K = \Kl$.

Given $\lambda \geq 1$ and $c\geq 0$, we consider two $(\lambda,
c)$-quasigeodesics $p,  q$ without backtracking in $\K$ such that
$p_- = q_-$, $p_+ = q_+$. By Lemma \ref{qcpair}, as $p$, $q$ are
assumed to have no backtracking, the paths $\hat p =\iota(p)$, $\hat
q=\iota(q)$ in $\G$ also have no backtracking. Moreover, for each
$H_i$-component $\hat s$ of $\hat p$(resp. $\hat q$), there is a
$K_j$-component $s$ of $p$(resp. $q$) such that $\hat s \subset
\iota(s)$.

Note that paths $\hat p, \hat q$ are ($\lambda'$,$c'$)-quasigeodesic
without backtracking in $\G$ for some $\lambda' \geq 1, c' \geq 1$.
By BCP property of $\GH$, we have the constant $\hat
a=a(\lambda',c',d_G)$. Set $a = \hat a + 2\sigma$, where $\sigma$ is
the quasiconvex constant of $\Gamma$. Let $s$ be a $K_j$-component
of $p$ for some $j \in J$. We claim that if $d_\Gamma(s_-,s_+) > a$,
then there is a $K_j$-component $t$ of $q$ connected to $s$.

By the property of the map $\iota$, there exists an $H_i$-component
$\hat s$ of $\hat p$ such that the following hold $$d_G(\hat s_-,
\iota(s)_-) \le \sigma, \; d_G(\hat s_+, \iota(s)_+) \le \sigma.$$
Thus, we have $d_G(\hat s_-,\hat s_+) > \hat a$. Using BCP property
1) of $\GH$, there exists an $H_i$-component $\hat t$ of $\hat q$,
that is connected to $\hat s$. By the construction of $\iota$, there
is a $K_k$-component $t$ of $q$ for some $k \in J$ such that $\hat t
\subset \iota(t)$.

Since $\hat s$ and $\hat t$ are connected as $H_i$-components,
endpoints of $\hat s$ and $\hat t$ belong to the same $H_i$-coset.
By Lemma \ref{qcpair}, it follows that $k =j$. Furthermore,
endpoints of $s$ and $t$ must belong to the same $K_j$-coset. Hence
$s$ and $t$ are connected in $\K$. Therefore, it is verified that
$\KK$ satisfies BCP property 1).
\end{proof}

\section{Characterization of parabolically embedded subgroups}

\begin{conv}\label{conv0}
Without loss of generality, peripheral structures considered in this
section consist of infinite subgroups. It is easy to see that adding
or eliminating finite subgroups in peripheral structures still gives
relatively hyperbolic groups.
\end{conv}

\subsection{Parabolically embedded subgroups}
Let $\mathbb H = \Hl$ and $\mathbb P = \Pl$ be two peripheral
structures of a countable group $G$. Recall that $\mathbb P$ is an
\textit{extended peripheral structure} for $\GH$, if for each $H_i
\in \mathbb H$, there exists $P_j \in \mathbb P$ such that $H_i
\subset P_j$. Given $P \in \mathbb P$,  we define $\mathbb H_P =
\{H_i: H_i \subset P, i \in I \}$.

\begin{defn}
Suppose $\GH$ is relatively hyperbolic and $\mathbb P$ an extended
peripheral structure for $\GH$. If $\GP$ is relatively hyperbolic,
then $\mathbb P$ is called a \textit{parabolically extended}
structure for $\GH$. Moreover, each $P \in \mathbb P$ is said to be
\textit{parabolically embedded} into $\GH$.
\end{defn}

In this subsection, we assume that $\GH$ is relatively hyperbolic
and $\mathbb P$ is a parabolically extended structure for $\GH$.

Fix a finite relative generating set $X$ for $\GH$ and thus $\GP$.
Since $\GP$ is relatively hyperbolic, by Lemma \ref{Omega}, we
obtain a finite subset $\Omega$ and $\kappa \ge 1$ such that the
inequality (\ref{keyinequality}) holds in $\Gp$.

Due to Lemma \ref{periphintersect} and Convention \ref{conv0}, it is
worth to mention that we have $\mathbb H_P \cap \mathbb H_{P'} =
\emptyset$, if $P, P'$ are distinct in $\mathbb P$. This implies
that each $H \in \mathbb H$ belongs to exactly one $P \in \mathbb
P$.

Since $\mathbb P$ is an extended structure for $\GH$, then for each
$H_i \in \mathbb H$, there exists a unique $P_j \in \mathbb P$ such
that $H_i \hookrightarrow P_j$. By identifying $\mathcal H \subset
\mathcal P$, we regard $\G$ as a subgraph of $\Gp$.

With a slight abuse of notations, a path $p$ in $\G$ will be often
thought of as a path in $\Gp$. The ambiance will be made clear in
the context. The length $\len(p)$ of a path $p$ should also be
understood in the corresponding relative Cayley graphs, but the
values are equal by the natural embedding.

Let $\hat X = X \cup \Omega$. We first show parabolically embedded
subgroups are relatively finitely generated.
\begin{lem}\label{rfg}
Let $\Gamma = P_j \in \mathbb P$ be parabolically embedded into $G$.
Then $Y := \hat X \cap \Gamma$ is a finite relative generating set
for the pair $(\Gamma, \mathbb H_\Gamma)$.
\end{lem}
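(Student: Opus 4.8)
The set $Y=\hat X\cap\Gamma$ is finite because $\hat X=X\cup\Omega$ is finite, and plainly $\langle Y\cup\bigcup_{H_i\in\mathbb H_\Gamma}H_i\rangle\subseteq\Gamma$, since every generator lies in $\Gamma=P_j$. So the whole content is the reverse inclusion: every $\gamma\in P_j$ lies in $\Gamma':=\langle Y\cup\bigcup_{H_i\subset P_j}H_i\rangle$. Before starting I would record two facts. First, since $\Omega$ is the set produced by Lemma \ref{Omega} for $\Gp$, its truncation $\Omega_j:=\Omega\cap P_j$ satisfies $\Omega_j\subseteq Y$. Second, under the identification $\mathcal H\subset\mathcal P$ the graph $\G$ sits inside $\Gp$ as a subgraph preserving lengths, so any $\G$-geodesic may be read off as a path of $\Gp$ of the same length.

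The plan is to induct on the relative length $\len(\gamma)$ of $\gamma$ in $\G$ (its word length over $X\cup\mathcal H$), the base case $\len(\gamma)=0$ being $\gamma=1$. For the inductive step I would fix a $\G$-geodesic $p$ from $1$ to $\gamma$, regard it as a path in $\Gp$, and let $e=[1,\gamma]$ be the single edge of $\Gp$ labelled by $\gamma\in\mathcal P$, which is a $P_j$-component. Forming the cycle $c=pe^{-1}$ in $\Gp$, its $P_j$-components are $e$ together with the maximal runs $s_1,\dots,s_k$ of consecutive edges of $p$ whose labels lie in $\bigcup_{H_i\subset P_j}H_i$. Note that each $\lab(s_m)\in\langle\bigcup_{H_i\subset P_j}H_i\rangle\subseteq\Gamma'$ already.

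The argument then splits on whether $e$ is isolated in $c$. Because $e_-=1$, a component $s_m$ is connected to $e$ exactly when its endpoints lie in the subgroup $P_j$ itself. If no $s_m$ is so connected, then $e$ is an isolated $P_j$-component of $c$, and Lemma \ref{Omega} applied to $c$ gives $d_{\Omega_j}(1,\gamma)\le\kappa\,\len(c)<\infty$; by Remark \ref{wordmetric} this forces $\gamma\in\langle\Omega_j\rangle\subseteq\langle Y\rangle\subseteq\Gamma'$. Otherwise, let $s_{m_1},\dots,s_{m_r}$ with $r\ge1$ be the $e$-connected components in their order along $p$; all of their endpoints, together with $p_-=1$ and $p_+=\gamma$, lie in $P_j$. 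Writing $p$ as the concatenation of these components with the complementary subpaths $u_0,\dots,u_r$, each $u_t$ is a subpath of a geodesic, hence itself a $\G$-geodesic, and has both endpoints in $P_j$, so $\lab(u_t)\in P_j$ with $\len(\lab(u_t))=\len(u_t)<\len(\gamma)$. The inductive hypothesis then puts each $\lab(u_t)\in\Gamma'$, and the factorisation $\gamma=\lab(u_0)\lab(s_{m_1})\lab(u_1)\cdots\lab(s_{m_r})\lab(u_r)$ exhibits $\gamma\in\Gamma'$.

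The only point that really needs care — and the hinge of the whole proof — is the observation that the subpaths complementary to the $e$-connected $P_j$-components are again elements of $P_j$ of strictly smaller relative length, which is exactly what lets the induction close; the genuinely isolated case is where Osin's inequality (\ref{keyinequality}) does the work, pushing $\gamma$ into $\langle\Omega_j\rangle\subseteq Y$. I would emphasise that this coset bookkeeping for $P_j$ alone suffices, so no appeal to weak malnormality or to Lemma \ref{periphintersect} is required here.
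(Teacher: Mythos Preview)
Your argument is correct and arrives at the same conclusion as the paper, but the route is genuinely different. The paper does \emph{not} induct on relative length. In its Case~2 it decomposes $p = p_1 e_1 \cdots p_n e_n p_{n+1}$ along the $e$-connected $\Gamma$-components $e_i$, then for each complementary piece $p_k$ it forms a small cycle $c_k := p_k s_k^{-1}$ with $s_k$ a single $\Gamma$-edge joining the already-known $\Gamma$-vertices $(e_{k-1})_+$ and $(e_k)_-$; since $s_k$ is isolated in $c_k$, Lemma~\ref{Omega} gives $d_{\Omega_j}((s_k)_-,(s_k)_+) \le \kappa(\len(p_k)+1)$, so each gap element lies in $\langle \Omega_j \rangle$ outright. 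Thus the paper invokes Lemma~\ref{Omega} once per gap and never recurses, whereas you invoke it only in the globally isolated case and feed the gaps back into the induction hypothesis.

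What each buys: the paper's direct argument yields an explicit word $\hat p$ for $\gamma$ over $Y \cup \mathcal H_\Gamma$ together with the quantitative bound $\len(\hat p) \le 2\kappa \len(p)$ (via the estimate (\ref{P2})), and this bound is reused verbatim in the proof of Proposition~\ref{PIISQC} to show that a $\Khat$-geodesic is a quasigeodesic in $\Ghat$. Your inductive proof establishes the generation statement cleanly but does not produce a linear length estimate --- unwinding the recursion could in principle blow up the constant. So while your approach is a perfectly valid proof of Lemma~\ref{rfg} as stated, the paper's version is doing double duty for the next proposition. One small wording issue: you describe the $P_j$-components of $p$ as runs of edges labelled from $\bigcup_{H_i\subset P_j} H_i$; under Osin's alphabet convention this is exactly right, though the paper (somewhat loosely) also allows $X\cap\Gamma$-edges there --- either way the labels land in $\Gamma'$, so nothing breaks.
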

\begin{proof}
For any $\gamma \in \Gamma$, we take a geodesic $p$ in $\G$ with
endpoints 1 and $\gamma$. In $\Gp$, we can connect $p_-$ and $p_+$
by an edge $e$, labeled by some letter from $\Gamma$, such that
$e_-=p_-$ and $e_+=p_+$. Then the path $c := pe^{-1}$ is a cycle in
$\Gp$. Without loss of generality, we assume $e$ is a
$\Gamma$-component of $c$.

The following two cases are examined separately.

\textit{Case 1.} If there is no $\Gamma$-component of $p$ connected
to $e$ in $\Gp$, then $e$ is an isolated component of $c$. By Lemma
\ref{Omega}, we have
\[
d_{\Omega_j} (e_-,e_+) \le \kappa \len(c) \le \kappa(\len(p) + 1)
\]
where $\Omega_j := \Omega \cap \Gamma$. In particular, there is a
path $q$ in $\Ghat$ labeled by letters from $\Omega_j$, such that
$q_- = e_-$, $q_+ = e_+$ and
$$\len(q) =
d_{\Omega_j}(e_-,e_+).$$ Hence, the element $\gamma$ is a word over
the alphabet $\Omega_j$.

\textit{Case 2.} We suppose that $\{e_1,\ldots, e_i, \ldots,e_n\}$
is the maximal set of $\Gamma$-components of $p$ such that each
$e_i$ is connected to $e$. Then $p$ can be decomposed as
\begin{equation} \label{R1}
p = p_1 e_1 \ldots p_i e_i \ldots p_n e_n p_{n+1}.
\end{equation}
Since $e_i$ is a $\Gamma$-component of $p$, each edge of $e_i$ is
labeled by an element in $\Gamma$. On the other hand, as a subpath
of $p$, $e_i$ has the label $\lab(e_i)$ which is a word over $X \cup
\mathcal H$. Observe that each $H \in \mathbb H$ belongs to exactly
one $P \in \mathbb P$. Thus we obtain that each $\lab(e_i)$ is a
word over $(X \cap \Gamma) \cup \mathbb H_\Gamma$

Since the vertex set $\{e_-, (e_1)_-, (e_1)_+, \ldots, (e_n)_-,
(e_n)_+, e_+\}$ lies in $\Gamma$,  we can connect pairs of
consequent vertices
$$\{e_-,(e_1)_-\}, \dots, \{(e_k)_+, (e_{k+1})_-\}, \dots,
\{(e_n)_+,e_+\}$$ by edges $s_0, \dots,s_k \dots, s_n$ labeled by
letters from $\Gamma$ respectively. We can get $n+1$ cycles $c_k :=
p_k s_k^{-1}, 1 \le k \le n+1$, such that $s_k$ is an isolated
$\Gamma$-component of $c_k$.

As argued in \textit{Case 1} for each cycle $c_k$, we obtain a path
$q_k$ in $\Ghat$ labeled by letters from $\Omega_j$, such that
$(q_k)_- = (e_k)_-$, $(q_k)_+ = (s_k)_+$, $\len(q_k) = d_{\Omega_j}(
(s_k)_-, (s_k)_+)$ and the following inequality holds
\begin{equation} \label{R2}
\len(q_k) \le \kappa \len(c_k) \le \kappa(\len(p_k) + 1).
\end{equation}

In particular, we obtain a path $\hat p$ in $\Ghat$ as follows
\begin{equation} \label{R3}
\hat p := q_1 e_1 \ldots q_i e_i \ldots q_n e_n q_{n+1}
\end{equation}
with same endpoints as $p$. Note that the label $\lab(\hat p)$ is a
word over the alphabet $(\hat X \cap \Gamma) \cup \mathcal
H_\Gamma$. Therefore, $\gamma$ is a word over $(\hat X \cap \Gamma)
\cup \mathcal H_\Gamma$.
\end{proof}

\begin{prop} \label{PIISQC}
Let $\Gamma = P_j \in \mathbb P$ be parabolically embedded into $G$.
Then $\Gamma$ is relatively quasiconvex with respect to $\mathbb H$.
\end{prop}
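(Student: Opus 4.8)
The plan is to verify relative quasiconvexity through Corollary \ref{QCIndep}: it suffices to exhibit a finite set $\mathcal A\subset G$ and a constant $\sigma$ so that every vertex of an arbitrary $\G$-geodesic $p$ with $p_-,p_+\in\Gamma$ lies in the $\sigma$-neighbourhood of $\Gamma$ with respect to $d_{\mathcal A}$. By left invariance I may assume $p_-=1$, $p_+=\gamma\in\Gamma$. The idea is to compare $p$ with a second path $\hat p$ from $1$ to $\gamma$ all of whose vertices lie in $\Gamma$, and then invoke the stability of quasigeodesics, Lemma \ref{stableqg}, inside the enlarged relative Cayley graph $\Ghat$ associated to the finite relative generating set $\hat X=X\cup\Omega$. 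Since $\hat X$ is again a finite relative generating set for $\GH$, Lemmas \ref{Omega} and \ref{stableqg} apply verbatim to $\Ghat$, yielding a finite set $\Omega'$ and a fellow-travelling constant $\epsilon$ measured in $d_{\hat X\cup\Omega'}$; this $\mathcal A:=\hat X\cup\Omega'$ is the finite set to be fed into Corollary \ref{QCIndep}.

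I first record the relevant properties of the two paths inside $\Ghat$. Because $p$ is a $\G$-geodesic, each of its $H_i$-components is a single edge, so $p$ has no nonphase vertices and no backtracking; as the $\mathcal H$-part of $\Ghat$ coincides with that of $\G$ and connectedness of $H_i$-components depends only on the $H_i$-cosets of their endpoints, $p$ remains a quasigeodesic without backtracking in $\Ghat$ with all vertices phase. For the comparison path I use the construction from the proof of Lemma \ref{rfg}, which rewrites $p$ as a path $q_1e_1\cdots e_nq_{n+1}$ whose $q_k$ are labelled by $\Omega_j\subset\Gamma$ and whose $e_k$ are labelled by $(X\cap\Gamma)\cup\mathcal H_\Gamma\subset\Gamma$; starting at $1\in\Gamma$, this path lies in $\Khat$ and has every vertex in $\Gamma$. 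Counting lengths exactly as in that proof gives $\len(\cdot)\le(2\kappa+1)\len(p)+\kappa$, whence $d_{Y\cup\mathcal H_\Gamma}(1,\gamma)\le(2\kappa+1)\dxh(1,\gamma)+\kappa$. Combining this with the finite-set quasi-isometry $\G\hookrightarrow\Ghat$ (each $\omega\in\Omega$ has finite norm $\norm{\omega}{X\cup\mathcal H}\le D$, so $\dxh\le D\cdot d_{\hat X\cup\mathcal H}$) and with the trivial inequality $d_{\hat X\cup\mathcal H}\le d_{Y\cup\mathcal H_\Gamma}$ on $\Gamma$, I conclude that the inclusion $\Khat\hookrightarrow\Ghat$ is a quasi-isometric embedding. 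Therefore I may now take $\hat p$ to be a genuine $\Khat$-geodesic from $1$ to $\gamma$: it is a quasigeodesic in $\Ghat$ with uniform constants, it has no backtracking in $\Khat$ and hence (again because the $H_i$-coset data is identical in $\Gamma$ and in $G$) none in $\Ghat$, and all of its vertices lie in $\Gamma$.

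With $p$ and $\hat p$ now two quasigeodesics without backtracking in $\Ghat$ sharing endpoints, Lemma \ref{stableqg} (applied with $\hat X$) produces, for each phase vertex $u$ of $p$, a phase vertex $v$ of $\hat p$ with $d_{\hat X\cup\Omega'}(u,v)<\epsilon$. Every vertex of $p$ is phase and $v\in\Gamma$, so every vertex of $p$ lies within $d_{\hat X\cup\Omega'}$-distance $\epsilon$ of $\Gamma$; Corollary \ref{QCIndep} with $\mathcal A=\hat X\cup\Omega'$ then gives that $\Gamma$ is relatively quasiconvex with respect to $\mathbb H$. The one delicate point, and the reason for the detour through the quasi-isometric embedding, is the quasigeodesic (subpath) requirement of Lemma \ref{stableqg}: the explicit path furnished by Lemma \ref{rfg} comes only with a bound on its \emph{total} length, not the subpath bound a quasigeodesic demands, and this forces me to first upgrade that length estimate to the embedding $\Khat\hookrightarrow\Ghat$ and then replace the explicit path by an honest $\Khat$-geodesic.
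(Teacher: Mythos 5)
Your proposal is correct and follows essentially the same route as the paper: both pass to $\hat X = X\cup\Omega$, use the rewriting construction of Lemma \ref{rfg} to show that a geodesic of $\Khat$ is a quasigeodesic without backtracking in $\Ghat$ (your ``quasi-isometric embedding'' step is exactly the paper's inequality (\ref{P4}) applied to arbitrary subpaths), and then conclude via Lemma \ref{stableqg} and Corollary \ref{QCIndep}. The ``delicate point'' you flag about the subpath condition is handled in the paper in the same way, by observing that the length estimates apply to every subpath of the $\Khat$-geodesic.
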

\begin{proof}
Since $\hat X$ is a finite relative generating set for $\GH$, using
Lemma \ref{Omega}, we obtain a finite set $\Sigma$ and constant $\mu
> 1$ such that the inequality (\ref{keyinequality}) holds in
$\Ghat$.

Let $p$ be a geodesic in $\G$ such that $p_-, p_+ \in \Gamma$. By
Corollary \ref{QCIndep}, it suffices to prove that $p$ lies in a
uniform neighborhood of $\Gamma$ with respect to $d_{\hat X \cup
\Sigma}$.

By Lemma \ref{rfg}, we have a finite relative generating set $Y
\subset \hat X$ for $(\Gamma, \mathbb H_\Gamma)$. Then we have
$\Khat \hookrightarrow \Ghat$. Let $q$ be a geodesic in $\Khat$ such
that $q_- = p_-, q_+=p_+$. We claim that $q$ is a quasigeodesic
without backtracking in $\Ghat$. No backtracking of $q$ is obvious.
We will show the quasigeodesicity of $q$.

We apply the same arguments to $p$, as \textit{Case 2} in the proof
of Lemma \ref{rfg}. Precisely, we decompose $p$ as (\ref{R1}) and
proceed to obtain the inequality (\ref{R2}) and construct a path
$\hat p$ as in (\ref{R3}). Observe that $\hat p$ has the same
endpoints as $p$, and $\lab(\hat p)$ is a word over the alphabet $Y
\cup \mathcal H_\Gamma$. As $\hat p$ can be regarded as path in
$\Khat$, we obtain
\begin{equation} \label{P1}
\len(q) \le \len(\hat p).
\end{equation}

Using the inequality (\ref{R2}), we estimate the length of $\hat p$
as follows
\begin{equation} \label{P2}
\begin{array}{rl}
\len(\hat p) & = \sum_{1 \le k\le n+1} \len(q_k) + \sum_{1 \le k\le
n} \len(e_k) \\
& \le \sum_{1 \le k\le n+1} \kappa\len(p_k) + \sum_{1 \le k\le n}
\len(e_k) + (n+1) \kappa \le 2\kappa \len(p).
\end{array}
\end{equation}

Since $\hat X$ is finite, the embedding $\G \hookrightarrow \Ghat$
is a quasi-isometry. Thus there are constants $\lambda \ge 1, c \ge
0$, such that the geodesic $p$ in $\G$ is a $(\lambda,
c)$-quasigeodesic in $\Ghat$, i.e.:
\begin{equation} \label{P3}
\len(p) < \lambda d_{\hat X \cup \mathcal H}(p_-,p_+)+c.
\end{equation}

Combining (\ref{P1}), (\ref{P2}) and (\ref{P3}), we have
\begin{equation} \label{P4}
\len(q) \le 2\kappa \lambda d_{\hat X \cup \mathcal H}(q_-,q_+) +
2\kappa c
\end{equation}

It is easy to see the above estimates (\ref{P1}), (\ref{P2}) and
(\ref{P3}) can be applied to arbitrary subpath of $q$. Thus the same
inequality as (\ref{P4}) is obtained for arbitrary subpath of $q$.
This proves our claim that $q$ is a $(2\kappa \lambda, 2\kappa
c)$-quasigeodesic without backtracking in $\Ghat$.

As $\kappa \ge 1$, $p$ is a $(2\kappa \lambda, 2\kappa
c)$-quasigeodesic in $\mathscr{G}(G, \hat X \cup \mathcal H)$. Hence
by Lemma \ref{stableqg}, there exists a constant $\epsilon =
\epsilon(\kappa, \lambda, c)$ such that, for each vertex $v \in p$,
there is a phase vertex $u \in q$ such that $d_{\hat X \cup
\Sigma}(u,v) \le \epsilon$.

On the other hand, the vertex set of $q$ lies entirely in $\Gamma$.
Thus $p$ lies in a $\epsilon$-neighborhood of $\Gamma$ with respect
to $d_{\hat X \cup \Sigma}$. Therefore, we have proven the relative
quasiconvexity of $\Gamma$ with respect to $\mathbb H$.
\end{proof}

Lemma \ref{QCISRH} and Proposition \ref{PIISQC} together prove the
following.
\begin{cor} \label{PIISRH}
A parabolically embedded subgroup $\Gamma$ is hyperbolic relative to
$\mathbb H_\Gamma$.
\end{cor}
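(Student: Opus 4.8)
The plan is to read off the statement from Proposition \ref{PIISQC} and Lemma \ref{QCISRH}, the only genuine work being to reconcile the peripheral collection $\mathbb K$ produced there with $\mathbb H_\Gamma$. By Proposition \ref{PIISQC}, the parabolically embedded subgroup $\Gamma = P_j$ is relatively $\sigma$-quasiconvex with respect to $\mathbb H$; hence Lemma \ref{QCISRH} applies and yields that $(\Gamma, \mathbb K)$ is relatively hyperbolic, where $\mathbb K$ is the finite collection of Lemma \ref{qcembed}. By Lemma \ref{conjugacy}, $\mathbb K$ is a set of representatives of the $\Gamma$-conjugacy classes of $\mathbb{\hat K} = \{H_i^g \cap \Gamma : \sharp \; H_i^g \cap \Gamma = \infty\}$, so it suffices to identify these classes with $\mathbb H_\Gamma$.

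First I would check that $\mathbb H_\Gamma \subset \mathbb{\hat K}$: each $H \in \mathbb H_\Gamma$ satisfies $H \subset \Gamma$, so with $g = 1$ we have $H = H \cap \Gamma$, which is infinite by Convention \ref{conv0}. For the reverse inclusion up to conjugacy, I would show that every infinite intersection $H_i^g \cap \Gamma$ is $\Gamma$-conjugate to a member of $\mathbb H_\Gamma$. Since $\mathbb P$ is an extended structure, each $H_i$ lies in a unique $P_{j'} \in \mathbb P$, whence $H_i^g \cap \Gamma \subset P_{j'}^g \cap P_j$. Applying Lemma \ref{periphintersect} to the relatively hyperbolic pair $\GP$: if $j' \neq j$ then $P_{j'}^g \cap P_j$ is finite, contradicting the assumed infiniteness; thus $j' = j$, so $H_i \subset P_j = \Gamma$ (i.e.\ $H_i \in \mathbb H_\Gamma$), and the infiniteness of $P_j^g \cap P_j$ forces $g \in \Gamma$. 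Consequently $H_i^g \subset \Gamma$ and $H_i^g \cap \Gamma = H_i^g$ is the $\Gamma$-conjugate $(H_i)^g$ of $H_i \in \mathbb H_\Gamma$, as required.

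It then remains to see that distinct $H_i, H_k \in \mathbb H_\Gamma$ determine distinct $\Gamma$-conjugacy classes, so that $\mathbb H_\Gamma$ is itself the reduced collection. Indeed, a relation $H_i^\gamma = H_k$ with $\gamma \in \Gamma$ and $i \neq k$ would make $H_i^\gamma \cap H_k = H_k$ infinite, contradicting part 2) of Lemma \ref{periphintersect} for $\GH$. Combining the three points, $\mathbb K$ and $\mathbb H_\Gamma$ coincide up to $\Gamma$-conjugacy, and therefore $(\Gamma, \mathbb H_\Gamma)$ is relatively hyperbolic.

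The main obstacle is this identification rather than the bookkeeping of citing the two results: the subtlety is that one must pass between the two relatively hyperbolic structures $\GH$ and $\GP$, using the malnormality-type statements of Lemma \ref{periphintersect} in $\GP$ to rule out the extra conjugates $H_i^g$ (those with $g \notin \Gamma$, or with $H_i \not\subset \Gamma$) that could a priori enlarge $\mathbb K$ beyond $\mathbb H_\Gamma$.
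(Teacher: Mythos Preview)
Your proof is correct and follows the same route as the paper, which simply states that Lemma \ref{QCISRH} and Proposition \ref{PIISQC} together yield the corollary. The paper leaves implicit the identification of the collection $\mathbb K$ from Lemma \ref{qcembed} with $\mathbb H_\Gamma$ up to $\Gamma$-conjugacy, whereas you carry it out carefully using Lemma \ref{periphintersect} in both $\GH$ and $\GP$; this extra bookkeeping is a genuine (if routine) gap you have filled rather than a deviation in approach.
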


\subsection{Lifting of quasigeodesics}
In this subsection, we assume that $\GH$ is relatively hyperbolic
and $\mathbb P$ an extended structure for $\GH$. The results
established in this subsection will be applied in subsection 3.3 to
prove Theorem \ref{charpi}.

To make our discussion more transparent, we first note the following
assumption, on which the notion of a lifting path is defined.

\begin{assump} \label{assumptiona}
Each $P_j \in \mathbb P$ is relatively quasiconvex with respect to
$\mathbb H$.
\end{assump}

By Lemma \ref{qcembed}, we assume that each $P_j \in \mathbb P$ is
finitely generated by a finite set $Y_j$ with respect to $\mathbb
H_j:= \mathbb H_{P_j}$. Without loss of generality, we assume $X$ to
be a finite relative generating set for $\GH$ such that $Y_j \subset
X$ for each $j \in J$. So we can identify the relative Cayley graph
$\mathscr{G}(P_j,Y_j \cup \mathcal H_j)$ of $P_j$ as a subgraph of
$\G$.  Thus given any path $p$ of $\Gp$, we can define the
\textit{lifting path} of $p$ in $\G$, by replacing each
$P_j$-component of $p$ by a geodesic segment in $\mathscr{G}(P_j,Y_j
\cup \mathcal H_j)$ with same endpoints.

Precisely, we express the path $p$ in $\Gp$ in the following form
\begin{equation} \label{path}
p = s_0 t_0 \ldots s_k t_k \ldots s_n t_n,
\end{equation}
where $t_k$ are $P_k$-components of $p$ and $s_k$ are labeled by
letters from $X$. It is possible that $P_i = P_j$ for $i \ne j$. We
allow $s_0$ and $t_n$ to be trivial.

Let $\iota_k: \mathscr{G}(P_k,Y_k \cup \mathcal H_k) \hookrightarrow
\G$ be the graph embedding. For each $t_k$, we take a geodesic
segment $\hat t_k$ in $\mathscr{G}(P_k,Y_k \cup \mathcal H_k)$ such
that $(\hat t_k)_- = (t_k)_-$ and $(\hat t_k)_+ = (t_k)_+$. Then the
following constructed path
\begin{equation}\label{liftpath}
\hat p = s_0 \iota(\hat t_0) \ldots s_k \iota(\hat t_k) \ldots s_n
\iota(\hat t_n)
\end{equation}
is the \textit{lifting path} of $p$ in $\G$.

The following two lemmas require only \ref{assumptiona} above.

\begin{lem} \label{nobacktracking}
Lifting of a path without backtracking in $\Gp$ has no backtracking
in $\G$.
\end{lem}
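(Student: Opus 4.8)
The plan is to argue by contradiction, tracing any alleged backtracking in the lifting $\hat p$ back to a backtracking of $p$ itself. First I would observe that every $H_i$-component of $\hat p$ is confined to a single lifted segment $\iota(\hat t_k)$ occurring in the decomposition (\ref{liftpath}): the connecting edges $s_k$ are labeled by letters from $X$, hence are not $\mathcal H$-edges, and since $Y_k \subset X$ the only $\mathcal H$-edges appearing in $\iota(\hat t_k)$ are the $H_i$-edges with $H_i \in \mathbb H_k$. Thus a maximal $\mathcal H$-subpath labeled from $H_i$ cannot cross any $s_k$ and lies entirely within one $\iota(\hat t_k)$. Suppose now that $\hat p$ has backtracking, so there are two distinct connected $H_i$-components $a$ and $b$ of $\hat p$, lying in segments $\iota(\hat t_k)$ and $\iota(\hat t_l)$ respectively.

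If $k = l$, then $a$ and $b$ are connected $H_i$-components inside the single geodesic $\hat t_k$ of $\mathscr{G}(P_k, Y_k \cup \mathcal H_k)$. But a geodesic in a relative Cayley graph has no backtracking: two connected $H_i$-components could be short-circuited by a single $H_i$-edge between their endpoints, contradicting minimality of length. This rules out the case $k = l$.

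Next I would treat $k \ne l$. Since $a$ and $b$ are $H_i$-components of the same type, we need $H_i \in \mathbb H_k \cap \mathbb H_l$; in particular $H_i \subset P_k$ and $H_i \subset P_l$. If $P_k \ne P_l$ as subgroups, then the disjointness $\mathbb H_{P} \cap \mathbb H_{P'} = \emptyset$ for distinct $P, P'$ in $\mathbb P$ (a consequence of Lemma \ref{periphintersect} and Convention \ref{conv0}) shows no such common $H_i$ can exist, a contradiction. So the only surviving possibility is $P_k = P_l =: P$, which is the heart of the matter.

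In this last case $t_k$ and $t_l$ are both $P$-components of $p$, and since $p$ has no backtracking in $\Gp$ they are not connected there, i.e.\ $(t_k)_-$ and $(t_l)_-$ lie in distinct $P$-cosets. On the other hand, every vertex of $\iota(\hat t_k)$ lies in the coset $(t_k)_- P$ and every vertex of $\iota(\hat t_l)$ lies in $(t_l)_- P$, because each lifted segment is a path in $\mathscr{G}(P, Y \cup \mathcal H_P)$ translated to begin at the relevant left endpoint. Connectedness of $a$ and $b$ as $H_i$-components gives $a_-^{-1} b_- \in H_i \subset P$, so $a_- P = b_- P$; combined with $a_- \in (t_k)_- P$ and $b_- \in (t_l)_- P$ this forces $(t_k)_- P = (t_l)_- P$, whence $t_k$ and $t_l$ are connected $P$-components of $p$, contradicting the hypothesis. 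The main obstacle I anticipate is precisely this coset-tracking step: one must confirm that each lifted geodesic stays inside one $P$-coset, so that connectedness upstairs in $\G$ descends to connectedness of the original $P$-components downstairs in $\Gp$.
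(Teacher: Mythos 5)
Your proof is correct and follows essentially the same route as the paper's: confine each $H_i$-component of $\hat p$ to a single lifted segment $\iota(\hat t_k)$ (since the $s_k$ are $X$-labeled), rule out the same-segment case via non-backtracking of the lifted geodesics, and in the different-segment case use that $H_i$ lies in exactly one $P\in\mathbb P$ to force the two $P$-cosets carrying $\iota(\hat t_k)$ and $\iota(\hat t_l)$ to coincide, contradicting non-backtracking of $p$. The only cosmetic difference is that you derive the same-segment case directly from geodesicity of $\hat t_k$ in $\mathscr{G}(P_k, Y_k\cup\mathcal H_k)$, whereas the paper cites Lemma \ref{qcpair}; both are valid.
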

\begin{proof}
We assume the path $p$ and its lifting $\hat p$ decompose as
(\ref{path}) and (\ref{liftpath}) respectively. By way of
contradiction, we assume that, for some $H \in \mathbb H$, there
exist $H$-components $r_1, r_2$ of $\hat p$, such that $r_1, r_2$
are connected. Since $s_k$ are labeled by letters from $X$, we have
$r_1 \subset \iota(\hat t_i), r_2 \subset \iota(\hat t_j)$ for some
$0 \le i, j\le n$.

Note that $\hat t_k$ is a geodesic in $\mathscr{G}(P_k,Y_k \cup
\mathcal H_k)$, which has no backtracking. Hence, by
\ref{assumptiona} and Lemma \ref{qcpair}, $\iota(\hat t_k)$ has no
backtracking in $\G$. It follows that $i \neq j$.

Since $p$ is assumed to have no backtracking, we have that
$\iota(\hat t_i)$ and $\iota(\hat t_j)$ lie entirely in distinct
cosets $g_1 P_i$ and $g_2 P_j$ respectively. On the other hand, as
$r_1$ and $r_2$ are connected $H$-components, their endpoints lie in
the same $H$-coset. By the assumption that $\mathbb H$ consists of
infinite subgroups, each $H \in \mathbb H$ belongs to exact one
subgroup $P \in \mathbb P$. Thus it follows that $g_1 P_i$ and $g_2
P_j$ coincide. This leads to a contradiction with
non-backtrackingness of $p$.
\end{proof}

Similar arguments as above allow one to prove the following.
\begin{lem} \label{fellowtravel}
Suppose $p, q$ are two paths in $\Gp$ such that there are no
connected $P_j$-components of $p, q$ for any $P_j \in \mathbb P$.
Then for any $H_i \in \mathbb H$, their liftings $\hat p, \hat q$
have no connected $H_i$-components.
\end{lem}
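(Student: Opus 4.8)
The plan is to mirror the structure of the proof of Lemma \ref{nobacktracking}, since the two statements are close cousins: there we showed that lifting destroys no backtracking, and here we show that lifting does not \emph{create} connections between $H_i$-components originating from distinct paths $p$ and $q$. First I would decompose $p$ and $q$ as in (\ref{path}), writing $p = s_0 t_0 \ldots s_n t_n$ and $q = s_0' t_0' \ldots s_m' t_m'$, with $t_k, t_l'$ being $P$-components and the $s$'s labeled by letters from $X$. Correspondingly, the liftings $\hat p$ and $\hat q$ take the form (\ref{liftpath}), replacing each $P$-component by a geodesic in the appropriate $\mathscr{G}(P_k, Y_k \cup \mathcal H_k)$.

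Next I would argue by contradiction: suppose there exist connected $H_i$-components $r_1$ of $\hat p$ and $r_2$ of $\hat q$ for some $H_i \in \mathbb H$. Because the segments $s_k$ and $s_l'$ are labeled by letters from $X$ (and so contain no $\mathcal H$-letters), the $H_i$-components $r_1, r_2$ must live inside lifted blocks: $r_1 \subset \iota(\hat t_k)$ and $r_2 \subset \iota(\hat t_l')$ for some indices $k, l$. By \ref{assumptiona} and Lemma \ref{qcpair}, $\iota$ sends each peripheral $\mathbb H_{P}$-coset to a uniform distance from a peripheral $\mathbb H$-coset and separates distinct such cosets; in particular $\iota(\hat t_k)$ lies entirely in a single coset $g_1 P_k$ and $\iota(\hat t_l')$ in a single coset $g_2 P_l$. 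Since $r_1$ and $r_2$ are connected $H_i$-components, their endpoints share a common $H_i$-coset; and because $\mathbb H$ consists of infinite subgroups (Convention \ref{conv0}), each $H_i$ is contained in exactly one $P \in \mathbb P$. Tracing through Lemma \ref{qcpair} as in Lemma \ref{nobacktracking}, the shared $H_i$-coset forces the two ambient cosets $g_1 P_k$ and $g_2 P_l$ to coincide, hence $t_k$ and $t_l'$ are $P$-components of $p$ and $q$ lying in the same coset of the same peripheral subgroup, i.e.\ they are connected $P$-components of $p$ and $q$.

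This is precisely the situation the hypothesis excludes, so I would conclude with the desired contradiction. The main obstacle, as flagged by the phrase ``similar arguments as above,'' is getting the coset-bookkeeping exactly right: I must be careful that Lemma \ref{qcpair} is applied to the correct peripheral structure ($\mathbb H_{P_k}$ inside $P_k$, mapped into $\mathbb H$ inside $G$) so that the passage from ``$r_1, r_2$ lie in a common $H_i$-coset'' to ``$\hat t_k, \hat t_l'$ lie in a common $P$-coset'' is legitimate, and that the infiniteness assumption on $\mathbb H$ genuinely pins $H_i$ to a unique $P$. Since $p$ and $q$ are \emph{distinct} paths here rather than one non-backtracking path, I would not invoke non-backtracking of a single path but instead directly contradict the assumed absence of connected $P$-components between $p$ and $q$; otherwise the argument is a faithful transcription of the proof of Lemma \ref{nobacktracking}.
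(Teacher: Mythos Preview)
Your proposal is correct and follows essentially the same approach as the paper, which explicitly states that Lemma~\ref{fellowtravel} is proven by ``similar arguments as above'' (i.e., by adapting the proof of Lemma~\ref{nobacktracking}). Your adaptation is faithful: locate the offending $H_i$-components inside lifted blocks $\iota(\hat t_k)$ and $\iota(\hat t_l')$, use the uniqueness of the $P\in\mathbb P$ containing $H_i$ to force $g_1P_k=g_2P_l$, and contradict the hypothesis that $p,q$ share no connected $P$-components. One minor remark: the invocation of Lemma~\ref{qcpair} is not really needed to see that $\iota(\hat t_k)$ lies in a single $P_k$-coset---this is immediate from the fact that $\hat t_k$ is a path in $\mathscr{G}(P_k,Y_k\cup\mathcal H_k)$ with both endpoints in that coset---but this does not affect the validity of your argument.
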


To deduce our main result Proposition \ref{liftqc}, we make another
assumption as follows.
\begin{assump}\label{assumptionb}
Let $X$ be a finite relative generate set for $\GH$. There exists
$\kappa \geq 1$ such that for any cycle $o$ in $\Gp$ with a set of
isolated $\Gamma$-components $R = \{r_1, \ldots , r_k\}$, the
following holds
\[
\sum\limits_{r \in R} \dxh(r_-,r_+) \le \kappa \len(o).
\]
\end{assump}
\begin{rem}
Lemma \ref{Omega2} below states that Assumption B will be satisfied
under the assumptions of Theorem \ref{charpi}.
\end{rem}

Taking this assumption into account, we have the following.

\begin{prop} \label{liftqc}
Lifting of a quasigeodesic without backtracking in $\Gp$ is a
quasigeodesic without backtracking in $\G$.
\end{prop}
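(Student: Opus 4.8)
The plan is to lift a $(\lambda,c)$-quasigeodesic $p$ without backtracking from $\Gp$ to its lifting $\hat p$ in $\G$, and to bound $\len(\hat p)$ linearly in $\dxh(\hat p_-, \hat p_+)$. Non-backtracking of $\hat p$ is already granted by Lemma \ref{nobacktracking}, so the entire content is the quasigeodesic inequality. Decompose $p$ as in (\ref{path}), so $\hat p = s_0\,\iota(\hat t_0)\ldots s_n\,\iota(\hat t_n)$, where each $\iota(\hat t_k)$ is a geodesic segment in the subgraph $\mathscr{G}(P_k, Y_k\cup\mathcal H_k)\hookrightarrow\G$ replacing the $P_k$-component $t_k$. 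Since each such replacement is a geodesic in the corresponding peripheral relative Cayley graph, I have good control of $\len(\iota(\hat t_k))$ in terms of $\dxh((t_k)_-,(t_k)_+)$. The length of $\hat p$ is then $\sum_k \len(s_k) + \sum_k \len(\iota(\hat t_k))$, and the first sum equals the number of $X$-edges of $p$, which is at most $\len(p) \le \lambda\,\dxp(p_-,p_+)+c$ up to the quasigeodesic hypothesis on $p$ in $\Gp$.

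The core estimate I would establish is a reverse inequality: I want to compare $\dxh(\hat p_-,\hat p_+)$ from below against $\len(\hat p)$, using that $p$ is quasigeodesic downstairs in $\Gp$. First I would take a geodesic $o'$ in $\Gp$ between the common endpoints $\hat p_- = p_-$ and $\hat p_+ = p_+$, and form its lifting. More efficiently, I would exploit Assumption B directly. Take a geodesic $w$ in $\G$ with $w_- = \hat p_-$, $w_+ = \hat p_+$, so $\len(w) = \dxh(\hat p_-,\hat p_+)$. Regard $w$ as a path in $\Gp$ as well. Form the cycle $o$ in $\Gp$ obtained by concatenating $p$ and $w^{-1}$; each $P_k$-component $t_k$ of $p$ contributes, and I want to argue that after discarding the $P_j$-components of $p$ that are \emph{connected} to components of $w$ (which are few and short by BCP-type control), the remaining components of $p$ are isolated in $o$. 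Applying Assumption \ref{assumptionb} to $o$ gives $\sum_{r} \dxh(r_-,r_+) \le \kappa\,\len(o) = \kappa(\len(p)+\len(w))$. Since each $\dxh((t_k)_-,(t_k)_+) = \len(\iota(\hat t_k))$ by the geodesic choice of $\hat t_k$, summing these with the $X$-edge contribution recovers $\len(\hat p)$ up to the lengths of the connected components, yielding $\len(\hat p)\le \kappa'(\dxh(\hat p_-,\hat p_+)) + c'$ for constants depending only on $\lambda,c,\kappa$.

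The step I expect to be the main obstacle is controlling the $P_j$-components of $p$ that happen to be \emph{connected} to $P_j$-components of the comparison geodesic $w$: these are precisely the ones that fail to be isolated in the cycle $o$, so Assumption \ref{assumptionb} does not directly bound their $\dxh$-length. The resolution is that $p$ has no backtracking, so for each fixed $P_j$-coset the relevant component is unique, and the discrepancy between the endpoints of such a connected component of $p$ and the matching component of $w$ is uniformly bounded — this is exactly the kind of control supplied by the BCP property of $\Gp$ together with Lemma \ref{fellowtravel} and the relative quasiconvexity Assumption \ref{assumptiona}. I would therefore peel off these connected components, bound their total contribution by a constant multiple of the number of components times the BCP constant, and absorb this into the additive constant. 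Finally, since all estimates apply verbatim to any subpath of $p$ (the lifting of a subpath being the corresponding subpath of $\hat p$), the inequality holds for every subpath, which is precisely what the definition of a quasigeodesic in $\G$ demands, completing the proof.
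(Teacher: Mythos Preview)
Your overall strategy---compare $\hat p$ to a geodesic $w$ in $\G$, form the cycle $o = p w^{-1}$ in $\Gp$, and invoke Assumption~\ref{assumptionb}---matches the paper's. But there are two genuine gaps in your handling of the non-isolated components, and one minor slip.

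\medskip

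\textbf{The main gap: BCP of $\Gp$ is not available.} You write that the discrepancy between the endpoints of a connected $P_j$-component of $p$ and its partner on $w$ is ``uniformly bounded'' by ``the BCP property of $\Gp$.'' But Proposition~\ref{liftqc} is stated under Assumptions~\ref{assumptiona} and~\ref{assumptionb} only; $\GP$ is \emph{not} assumed relatively hyperbolic here. Indeed, Proposition~\ref{liftqc} (via Proposition~\ref{liftqc2}) is used in the proof of Theorem~\ref{charpi} precisely to \emph{establish} that $\GP$ is relatively hyperbolic, so invoking BCP of $\Gp$ at this point would be circular. The only tool you have for controlling $P_j$-components in $\Gp$ is Assumption~\ref{assumptionb}, and that requires isolation.

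\medskip

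\textbf{The second gap: the linear bound does not follow.} Suppose you replace BCP by what is actually available: for a non-isolated component $t_k$, form the auxiliary cycles as the paper does (connect $(t_k)_-$ to the first matching component on $w$ by an edge $u$, and $(t_k)_+$ to the last by an edge $v$), and apply Assumption~\ref{assumptionb} to bound $\dxh(u_-,u_+)$ and $\dxh(v_-,v_+)$. These bounds are of order $\kappa(\len(p)+\len(w))$, \emph{not} uniform constants, so the contribution of each non-isolated $t_k$ to $\dxh((t_k)_-,(t_k)_+)$ is $O(\len(w))$. Since the number of such components can itself be of order $\len(p)\le\len(w)$, the sum is only $O(\len(w)^2)$. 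Your plan to ``absorb this into the additive constant'' therefore fails. The paper accepts exactly this quadratic bound and then closes the argument with a fact you omit: in the \emph{hyperbolic} graph $\G$, any path whose length is sub-exponential in the distance between its endpoints is already a quasigeodesic (Bowditch, \cite[Lemma~5.6]{Bow1}). That is the missing ingredient.

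\medskip

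\textbf{Minor slip.} You assert $\dxh((t_k)_-,(t_k)_+)=\len(\iota(\hat t_k))$. But $\hat t_k$ is only a geodesic in the subgraph $\mathscr{G}(P_k,Y_k\cup\mathcal H_k)$; in $\G$ it is merely a $(\lambda',c')$-quasigeodesic by Lemma~\ref{qcembed}, so one has $\len(\hat t_k)\le \lambda'\,\dxh((t_k)_-,(t_k)_+)+c'$ rather than equality. This is easily repaired and does not affect the shape of the estimate.
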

\begin{proof}
To simplify our proof, we prove the proposition for the lifting of a
geodesic $p$ in $\Gp$. General cases follow from a
quasi-modification of the inequality in (\ref{Q0}) mentioned below.

We assume the path $p$ and its lifting $\hat p$ decompose as
(\ref{path}) and (\ref{liftpath}) respectively. Since $\iota_k$ is
an embedding, we will write $\hat t_k$ instead of $\iota(\hat t_k)$
for simplicity. We shall show the lifting path $\hat p = s_0 \hat
t_0 \ldots s_k \hat t_k \ldots s_n \hat t_n$ is a quasigeodesic in
$\G$.

By Lemma \ref{qcembed}, we have each $\hat t_k$ is a
$(\lambda,c)$-quasigeodesic in $\G$, where the constants $\lambda
\geq 1, c\geq 0$ depend on $P_k$ and $X$. As $\sharp\;\mathbb P$ is
finite, $\lambda$ and $c$ can be made uniform for all $P_k \in
\mathbb P$.

Let $q$ be a geodesic in $\G$ with same endpoints as $\hat p$. Since
$\G \hookrightarrow \Gp$, it is obvious that
\begin{equation} \label{Q0}
\len(p) \le \len(q).
\end{equation}
We consider the cycle $c :=p q^{-1}$ in $\Gp$. For each $t_k$, we
are going to estimate the length of $\hat t_k$ in $\G$.

\textit{Case 1.} The path $t_k$ is isolated in $c$. By
\ref{assumptionb}, there exists a constant $\kappa \ge 1$ such that
\begin{equation} \label{Q1}
\dxh((\hat t_k)_-,(\hat t_k)_+) \leq \kappa \len(c) \leq
\kappa(\len(p)+\len(q)) \leq 2\kappa(\len(q)).
\end{equation}

\textit{Case 2.} The path $t_k$ is not isolated in $c$. Then $t_k$
is connected to some $P_k$-component of $q$, as $p$ is assumed to
have no backtracking in $\Gp$. Let $e_1$ and $e_2$ be the first and
last $P_k$-components of $q$ connected to $t_k$. Note that $e_1$ may
coincide with $e_2$.

Since $e_1$ and $e_2$ are  connected to $t_k$, we can take two edges
$u$ and $v$ labeled by letters from $P_k$ such that
$$u_- = (t_k)_-, u_+ = (e_1)_-, v_- = (t_k)_+, v_+ = (e_2)_+ . $$
Then $c_1 := p_1 u q_1^{-1}$ and $c_2 := v^{-1} p_2 {q_2}^{-1}$ are
two cycles in $\Gp$. By the choice of $P_k$-components $e_1$ and
$e_2$, we deduce that $u$ and $v$ are isolated $P_k$-components of
$c_1$ and $c_2$ respectively. By \ref{assumptionb}, we have the
following inequalities
\begin{equation} \label{Q2}
\dxh(u_-,u_+) \leq \kappa \len(c_1) \leq \kappa(\len(p)+\len(q)+1)
\leq 2\kappa\len(q)+\kappa,
\end{equation}
and
\begin{equation} \label{Q3}
\dxh(v_-,v_+) \leq \kappa \len(c_2) \leq \kappa(\len(p)+\len(q)+1)
\leq 2\kappa\len(q)+\kappa.
\end{equation}
Then it follows from (\ref{Q2}) and (\ref{Q3}) that
\begin{equation} \label{Q4}
\begin{array}{rl}
\dxh((t_k)_-,(t_k)_+) & \leq \len(q) + \dxh(u_-,u_+) + \dxh(v_-,v_+) \\
& \leq (4\kappa+1)\len(q)+2\kappa.
\end{array}
\end{equation}

As $\hat t_k$ can be regarded as a $(\lambda,c)$-quasigeodesic in
$\G$, we estimate the length of $\hat t_k$ in $\G$ by taking into
account (\ref{Q1}) and (\ref{Q4}),
\begin{equation} \label{Q5}
\begin{array}{rl}
\len(\hat t_k) & \leq \lambda \dxh((t_k)_-,(t_k)_+) +c \\
& \leq \lambda (4\kappa+1)  \len(q)+ 2\lambda \kappa + c.
\end{array}
\end{equation}

Finally, we have
\begin{equation} \label{Q6}
\begin{array}{rl}
\len(\hat p) & = \sum_{0 \leq k \leq n} \len(s_i) + \sum_{0 \leq k
\leq n}
\len(\hat t_k) \\
& \leq \len(q) + \len(q) (\lambda (4\kappa+1)  \len(q)+ 2\lambda
\kappa + c) \\
& \leq \lambda (4\kappa+1) (\len(q))^2 + (2 \lambda \kappa + c +
1)\len(q).
\end{array}
\end{equation}

Similarly, we can apply the above estimates to arbitrary subpath of
$\hat p$ to obtain the same quadratic bound on its length as
(\ref{Q6}). It is well-known that in hyperbolic spaces a
sub-exponential path is a quasigeodesic, see Bowditch \cite[Lemma
5.6]{Bow1} for example. Note that $\G$ is hyperbolic. Hence $\hat p$
is a quasigeodesic in $\G$.
\end{proof}

\begin{rem}
In \cite{MarPed}, Martinez-Pedroza proves a specical case of
Proposition \ref{liftqc}, where $\mathbb P$ is obtained from $\mathbb H$ by adding hyperbolically embedded subgroups in the sense of Osin \cite{Osin2}.
\end{rem}

\subsection{Characterization of parabolically embedded subgroups}
Let $\mathbb H = \Hl$ and $\mathbb K \subset \mathbb H$ be two
peripheral structures of a countable group $G$. We will show the
following characterization of parabolically embedded subgroups.

\begin{thm} \label{charpi}
Let $G$ be hyperbolic relative to $\mathbb H$. Assume that

(C0). $\Gamma \subset G$ contains $\mathbb K \subset \mathbb H$,

(C1). $\Gamma$ is relatively quasiconvex,

(C2). $\Gamma$ is weakly malnormal,

(C3). $\Gamma^g \cap H_i$ is finite for any $g\in G$ and $H_i \in
\mathbb H \setminus \mathbb K$. \\
Then $G$ is hyperbolic relative
to $\{\Gamma\} \cup \mathbb H \setminus \mathbb K$.
\end{thm}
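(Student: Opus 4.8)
The plan is to verify Farb's definition (Definition~\ref{FarbRH}) for the pair $\GP$, where $\mathbb P = \{\Gamma\} \cup (\mathbb H \setminus \mathbb K)$, by using the lifting machinery of Subsection~3.2 to transport the hyperbolicity and the BCP property from the given relatively hyperbolic pair $\GH$. First I would check that this machinery applies. Every $H_i \in \mathbb K$ lies in $\Gamma$ by (C0) and every $H_i \in \mathbb H \setminus \mathbb K$ lies in $\mathbb P$, so $\mathbb P$ is an extended structure and $X$ is still a finite relative generating set for $\GP$. Assumption~\ref{assumptiona} holds since $\Gamma$ is relatively quasiconvex by (C1) and the remaining $H_i$ are peripheral, hence trivially relatively quasiconvex; Assumption~\ref{assumptionb} is supplied by Lemma~\ref{Omega2}, which is exactly where (C2) and (C3) are consumed. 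With both assumptions in force, Lemmas~\ref{nobacktracking} and~\ref{fellowtravel} and Proposition~\ref{liftqc} are available, so liftings of (quasi)geodesics without backtracking in $\Gp$ are (quasi)geodesics without backtracking in $\G$.

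For hyperbolicity of $\Gp$, take a geodesic triangle and lift its three sides by Proposition~\ref{liftqc} to a triangle of uniform quasigeodesics without backtracking in the hyperbolic graph $\G$; by stability of quasigeodesics this lifted triangle is uniformly thin. The vertex-identity inclusion $\G \hookrightarrow \Gp$ satisfies $\dxp \le \dxh$, and every lifted $\mathbb P$-component lies in a single $\mathbb P$-coset, whose vertices are pairwise within $\dxp$-distance $1$; hence the projection of each lifted side stays within $\dxp$-distance $1$ of the corresponding original geodesic. Pushing the thinness of the lifted triangle down through this $1$-Lipschitz inclusion then shows the original triangle is uniformly thin, so $\Gp$ is hyperbolic.

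It remains to verify the BCP property for $\GP$, which I expect to be the main obstacle. Fix a proper left invariant metric $d_G$, and for $(\lambda,c)$-quasigeodesics $p,q$ without backtracking in $\Gp$ with $p_-=q_-$, $p_+=q_+$, lift them to $\hat p,\hat q$, which by Proposition~\ref{liftqc} are $(\lambda',c')$-quasigeodesics without backtracking in $\G$; let $\hat a = a(\lambda',c',d_G)$ be the BCP constant of $\GH$. The case of a $P$-component $s$ with $P = H_i \in \mathbb H \setminus \mathbb K$ is routine: the lifting fixes $s$, and any connected $H_i$-component of $\hat q$ cannot come from the interior of a lifted $\Gamma$-component, since those contribute only $H_j$-components with $H_j \in \mathbb K$; so BCP~1) for $\GH$ returns an honest connected $H_i$-component of $q$, and BCP~2) transfers in the same way.

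The hard case is a $\Gamma$-component $s$ of $p$ with $d_G(s_-,s_+)$ large. Its lift $\hat s$ is a geodesic of $\Khat$ lying in a single coset $g\Gamma$, with endpoints far apart in $d_G$. Tracking the phase vertices of $\hat s$ by those of $\hat q$ through Lemma~\ref{stableqg}, I must argue that a sufficiently long such sojourn forces $q$ to possess a $\Gamma$-component in the same coset $g\Gamma$, i.e.\ one connected to $s$. This is exactly where weak malnormality (C2) and condition (C3) are indispensable: they guarantee that $g\Gamma$ has bounded coarse intersection with every distinct coset $g'\Gamma$ and with every coset $g'H_i$ for $H_i \in \mathbb H \setminus \mathbb K$, so $\hat q$ cannot shadow $\hat s$ for long without genuinely entering $g\Gamma$. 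Making this divergence quantitative --- the same phenomenon that underlies Assumption~\ref{assumptionb} and Lemma~\ref{Omega2} --- and calibrating the BCP constant $a$ against $\hat a$ and the quasiconvexity constant $\sigma$ is the technical heart; BCP~2) for $\Gamma$-components then follows from the same coset-separation estimate.
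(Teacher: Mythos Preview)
Your overall strategy matches the paper's: verify Farb's definition for $\GP$ by lifting via Proposition~\ref{liftqc} (which in this setting is Proposition~\ref{liftqc2}, once Lemma~\ref{Omega2} supplies Assumption~\ref{assumptionb}). The hyperbolicity argument you give is essentially identical to the paper's.

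The BCP verification for $\Gamma$-components is where your sketch diverges from the paper, and your version is the part that is not yet a proof. You propose to argue directly: if $d_G(s_-,s_+)$ is large, track the long lift $\hat s\subset g\Gamma$ by phase vertices of $\hat q$ and invoke coset separation to force $q$ to enter $g\Gamma$. The difficulty is that ``$\hat q$ shadows $\hat s$'' only gives vertices of $\hat q$ at bounded $d_{X\cup\Omega}$-distance from $g\Gamma$; this does not by itself produce a $\Gamma$-component of $q$, and the malnormality statements (C2)--(C3) control $d_G$-coarse intersections of cosets, not relative ones. You acknowledge this is ``the technical heart,'' but as written it is a restatement of the problem rather than a mechanism.

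The paper avoids this by arguing the contrapositive and using Lemma~\ref{Omega2} as a black box rather than re-deriving the divergence. Assume no $\Gamma$-component of $q$ is connected to $s$; the goal is a uniform bound on $d_{\hat X\cup\Sigma}(s_-,s_+)$. First, Lemma~\ref{stableqg} locates phase vertices $\hat u,\hat v$ of $\hat q$ near $s_-,s_+$, and one builds a short cycle $o$ in $\Gp$ containing $s$ and a segment of $q$. The isolation hypothesis makes $s$ an isolated $\Gamma$-component of $o$, so Lemma~\ref{Omega2} bounds $\dxh(s_-,s_+)$ uniformly. Second, one forms a cycle $\hat o$ in $\Ghat$ from $\hat s$ and a segment of $\hat q$; Lemma~\ref{fellowtravel} ensures every $H_i$-component of $\hat s$ is isolated in $\hat o$, and Lemma~\ref{Omega} then bounds the sum of their $d_\Sigma$-lengths by $\kappa\,\len(\hat o)$, which is uniform because both $\dxh(s_-,s_+)$ and $\len(\hat o)$ have just been bounded. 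This two-cycle argument is what replaces your informal divergence step; if you want to keep your direct formulation, you will in effect have to reprove Lemma~\ref{Omega2} inside the BCP argument.

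Your separate treatment of $H_i$-components for $H_i\in\mathbb H\setminus\mathbb K$ is fine and in fact slightly more explicit than the paper, which handles only the $\Gamma$-case and leaves the rest implicit.
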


Putting in another way, Theorem \ref{charpi} implies the following
\begin{cor}
Under the assumptions of Theorem \ref{charpi}, $\Gamma$ is a
parabolically embedded subgroup of $G$ with respect to $\mathbb K$.
\end{cor}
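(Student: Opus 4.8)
The plan is simply to read off the conclusion of Theorem \ref{charpi} against the definition of a parabolically embedded subgroup, so no new machinery is required. Set $\mathbb P := \{\Gamma\} \cup (\mathbb H \setminus \mathbb K)$. Theorem \ref{charpi} states precisely that $\GP$ is relatively hyperbolic, and $\GH$ is relatively hyperbolic by hypothesis. Thus it remains only to check that $\mathbb P$ is an \emph{extended} peripheral structure for $\GH$, after which the definition of a parabolically extended structure applies verbatim and exhibits $\Gamma \in \mathbb P$ as parabolically embedded into $\GH$.

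To verify the extension condition, I take an arbitrary $H_i \in \mathbb H$ and produce an element of $\mathbb P$ containing it. If $H_i \in \mathbb K$, then $H_i \subset \Gamma$ by (C0), and $\Gamma \in \mathbb P$. If instead $H_i \in \mathbb H \setminus \mathbb K$, then $H_i$ is literally one of the subgroups comprising $\mathbb P$, so $H_i \subset H_i \in \mathbb P$. Either way the required containment holds, so $\mathbb P$ is an extended peripheral structure for $\GH$; being relatively hyperbolic in both $\GH$ and $\GP$, the structure $\mathbb P$ is parabolically extended, and in particular $\Gamma$ is parabolically embedded.

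Finally, to justify the phrase ``with respect to $\mathbb K$'', I confirm that the collection $\mathbb H_\Gamma$ of peripheral subgroups absorbed by $\Gamma$ coincides with $\mathbb K$. One inclusion $\mathbb K \subset \mathbb H_\Gamma$ is immediate from (C0). For the reverse, applying (C3) with $g = 1$ shows that $\Gamma \cap H_i$ is finite for every $H_i \in \mathbb H \setminus \mathbb K$; since by Convention \ref{conv0} every peripheral subgroup is infinite, no such $H_i$ can lie inside $\Gamma$, whence $\mathbb H_\Gamma \subset \mathbb K$. The deduction carries no real obstacle: the entire substance is in Theorem \ref{charpi}, and the only point requiring a genuine (one-line) argument is this last identification $\mathbb H_\Gamma = \mathbb K$, which leans on the infiniteness convention for peripheral subgroups together with (C3).
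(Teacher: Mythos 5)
Your proposal is correct and takes the same route as the paper, which states this corollary without proof as merely ``putting Theorem \ref{charpi} in another way'': one simply checks against the definitions that $\mathbb P = \{\Gamma\} \cup (\mathbb H \setminus \mathbb K)$ is an extended peripheral structure (via (C0)) and that both $\GH$ and $\GP$ are relatively hyperbolic. Your extra one-line verification that $\mathbb H_\Gamma = \mathbb K$, using (C3) with $g=1$ and Convention \ref{conv0}, is a sensible way to pin down the phrase ``with respect to $\mathbb K$'' that the paper leaves implicit.
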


We now prove Theorem \ref{mainthm} using Theorem \ref{charpi}.
\begin{proof} [Proof of Theorem \ref{mainthm}]
For the sufficient part, Condition (P1) follows from Proposition
\ref{PIISQC}. Since $\GP$ is relatively hyperbolic, Conditions (P2)
and (P3) are direct consequences of Lemma \ref{periphintersect}.

Let $\mathbb P =\{P_1,\ldots, P_j, \ldots, P_n\}$. Recall that
$\mathbb H_{P_j} = \{H_i: H_i \subset P_j; i \in I\}$. Define
peripheral structures $$\mathbb P_k = \{P_1, \ldots, P_k\} \cup
\left(\mathbb H \setminus \cup_{1 \le j \le k} \mathbb
H_{P_j}\right), 0 \le k \le n.$$ Note that $\mathbb P_0 = \mathbb
H$, $\mathbb P_n = \mathbb P$. By definition, we have $\mathbb P_k$
is an extended structure for $(G, \mathbb P_{k-1})$ for each $1 \le
k \le n$. In particular, Conditions (P1)-(P3) imply that $P_k
\subset G$ satisfies Conditions (C0)-(C3) for $(G, \mathbb
P_{k-1})$.  By repeated applications of Theorem \ref{charpi}, we
obtain $\mathbb P_k$ is parabolically extended for $(G, \mathbb
P_{k-1})$. Finally, we prove that $\mathbb P$ is parabolically
extended for $\GH$.
\end{proof}

In what follows, we have all assumptions of Theorem \ref{charpi} are
satisfied.

Choose a finite relative generating set $X$ for $\GH$. Let $\Omega$
the finite set obtained by using Lemma \ref{Omega} for $\G$. To
simplify notations, we denote $\mathbb P = \{\Gamma\} \cup \mathbb H
\setminus \mathbb K$.

Since $\Gamma \subset G$ is assumed to satisfy Conditions (C0)--(C3),
by Lemma \ref{qcembed} we have $\Gamma$ is finitely generated by a
subset $Y$ with respect to $\mathbb K$. Without loss of generality,
we assume $Y \subset X$. So the graph embedding $\iota: \K
\hookrightarrow \G $ is a quasi-isometric map.

Note that $\mathbb P$ satisfies \ref{assumptiona}. So given any path
$p$ of $\G$, we can define the lifting path $\hat p$ in $\Gp$ as in
Subsection 3.2. So we have exactly Lemmas \ref{nobacktracking} and
\ref{fellowtravel}.

Furthermore, by Lemma \ref{Omega2} below, we have \ref{assumptionb}
satisfied in the current setting. So we have the following result by
Proposition \ref{liftqc}.
\begin{prop} \label{liftqc2}
Under the assumptions of Theorem \ref{charpi}. Lifting of a
quasigeodesic without backtracking in $\Gp$ is a quasigeodesic
without backtracking in $\G$.
\end{prop}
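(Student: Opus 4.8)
The plan is to deduce Proposition \ref{liftqc2} as the literal specialization of the general Proposition \ref{liftqc} to the peripheral structure $\mathbb P = \{\Gamma\} \cup (\mathbb H \setminus \mathbb K)$. Since Proposition \ref{liftqc} has already been proved for an arbitrary extended structure satisfying \ref{assumptiona} and \ref{assumptionb}, the whole argument reduces to checking that $\mathbb P$ is an extended structure for $\GH$ and that both assumptions hold here; the conclusion is then quoted verbatim.

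First I would confirm that $\mathbb P$ is an extended structure: each $H_i \in \mathbb K$ lies in $\Gamma$ by hypothesis (C0), hence in the member $\Gamma \in \mathbb P$, while each $H_i \in \mathbb H \setminus \mathbb K$ is itself a member of $\mathbb P$. Next I would verify \ref{assumptiona}. For $\Gamma$ this is exactly hypothesis (C1). For the remaining members $H_i \in \mathbb H \setminus \mathbb K$, relative quasiconvexity with respect to $\mathbb H$ is immediate: any two vertices of a common $H_i$-coset are joined in $\G$ by a single edge labeled by an element of $\mathcal H$, so a geodesic between endpoints in $H_i$ has all of its vertices in $H_i$. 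Thus \ref{assumptiona} holds, and as the preceding paragraph records, this already furnishes Lemmas \ref{nobacktracking} and \ref{fellowtravel} together with a well-defined lifting operation from $\Gp$ to $\G$.

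It remains to secure \ref{assumptionb}, which is the only substantive ingredient. This assumption asks for a constant $\kappa \geq 1$ bounding the total $\dxh$-length of the isolated $\Gamma$-components of any cycle in $\Gp$ by a linear multiple of the cycle's length. This is precisely the content of Lemma \ref{Omega2}, which asserts that \ref{assumptionb} is satisfied under the hypotheses of Theorem \ref{charpi}. I expect this isoperimetric-type inequality for the new peripheral subgroup $\Gamma$ inside the already hyperbolic pair $\GH$ to be the genuine difficulty of the whole circle of ideas, but it is quarantined in Lemma \ref{Omega2} and proved there using the remaining hypotheses (C2) and (C3).

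With both \ref{assumptiona} and \ref{assumptionb} in force for $\mathbb P = \{\Gamma\} \cup (\mathbb H \setminus \mathbb K)$, Proposition \ref{liftqc} applies directly, and its conclusion is exactly the statement of Proposition \ref{liftqc2}: the lifting of any quasigeodesic without backtracking in $\Gp$ is a quasigeodesic without backtracking in $\G$.
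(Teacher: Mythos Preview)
Your proposal is correct and follows essentially the same approach as the paper: the paper likewise observes that $\mathbb P = \{\Gamma\} \cup (\mathbb H \setminus \mathbb K)$ satisfies \ref{assumptiona}, invokes Lemma \ref{Omega2} to obtain \ref{assumptionb}, and then quotes Proposition \ref{liftqc} directly. Your explicit verification that the members $H_i \in \mathbb H \setminus \mathbb K$ are relatively quasiconvex (via the single-edge observation) is a minor elaboration the paper leaves implicit, but otherwise the arguments coincide.
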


The following Lemma \ref{Omega2} is an analogue of Lemma
\ref{Omega}, without assuming that $\GP$ is relatively hyperbolic.
Recall that $\mathbb P = \{\Gamma\} \cup \mathbb H \setminus \mathbb
K$.
\begin{lem}\label{Omega2}
Under the assumptions of Theorem \ref{charpi}. There exists $\mu
\geq 1$ such that for any cycle $o$ in $\Gp$ with a set of isolated
$\Gamma$-components $R = \{r_1, \ldots , r_k\}$, the following holds
\[
\sum\limits_{r \in R} \dxh(r_-,r_+) \le \mu \len(o).
\]
\end{lem}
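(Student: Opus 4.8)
The plan is to transport the cycle into $\G$, where the relative hyperbolicity of $(G,\mathbb H)$ is available, and then to imitate the proof of Lemma \ref{Omega}. The whole difficulty is that the passage from $\Gp$ to $\G$ inflates lengths, so the final estimate must be charged against $\len(o)$ and \emph{not} against the length of the lifted cycle.

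First I would lift. Given a cycle $o$ in $\Gp$ with isolated $\Gamma$-components $R=\{r_1,\dots ,r_k\}$, replace each $\Gamma$-component by its lift as in Subsection 3.2, producing a cycle $\hat o$ in $\G$ in which the piece over $r_i$ is $\hat r_i=\iota(\tilde r_i)$, the image of a geodesic $\tilde r_i$ of $\K$. By Lemma \ref{qcembed} each $\hat r_i$ is a quasigeodesic of $\G$ with $\dxh((r_i)_-,(r_i)_+)\le \len(\hat r_i)$, so it suffices to prove $\sum_i \len(\hat r_i)\le \mu'\len(o)$. Since the $r_i$ are isolated $\Gamma$-components, their endpoints lie in pairwise distinct cosets $g_i\Gamma$; by the construction of $\iota$ each $\hat r_i$ stays in a uniform neighbourhood of $g_i\Gamma$, and by Lemma \ref{fellowtravel} the lifts $\hat r_i$ have no mutually connected $\mathcal H$-components.

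Next I would estimate each $\len(\hat r_i)$ geometrically. Regard $\hat o$ as a polygon in the hyperbolic graph $\G$ whose long sides are the quasigeodesics $\hat r_i$ and whose remaining sides are the $X$- and $(\mathbb H\setminus\mathbb K)$-edges of $o$, numbering at most $\len(o)$ in total. By thin polygon estimates together with Lemma \ref{stableqg}, each point of $\hat r_i$ lies uniformly close to the union of the other sides. The portion of $\hat r_i$ close to another long side $\hat r_j$ is uniformly bounded: weak malnormality (C2) forces the distinct cosets $g_i\Gamma$ and $g_j\Gamma$ to fellow-travel only boundedly, and (C3) does the same for $g_i\Gamma$ against the $(\mathbb H\setminus\mathbb K)$-cosets. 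Hence, apart from finitely many bounded exceptional intervals, $\hat r_i$ is fellow-travelled by the short skeleton of $o$.

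The summation is the crux, and this is exactly where the finiteness hypotheses are used. Relative quasiconvexity (C1) and weak malnormality (C2) yield bounded packing of the family $\{g\Gamma\}$, so every skeleton edge of $o$ is charged by at most a uniformly bounded number of indices $i$; the coverage of the $\hat r_i$ by the skeleton therefore sums to at most a multiple of $\len(o)$. The genuine obstacle is to control the remaining, \emph{mutual} coverage among the long sides $\hat r_i$: a priori the bounded pairwise overlaps could accumulate quadratically in $k$, and one must instead show that their total is linear in $\len(o)$. This is the precise analogue, for the pair $\Gp$, of the separation of isolated $H_i$-subdiagrams in the proof of Lemma \ref{Omega}. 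I expect to establish it by combining the bounded packing of $\{g\Gamma\}$ with the bounded fellow-travelling forced by (C2), charging each bounded overlap to a distinct entry of $o$ into a neighbourhood of some $g_i\Gamma$; the number of such entries is at most a multiple of the number of skeleton edges, hence at most $\mu''\len(o)$. Granting this, $\sum_i\len(\hat r_i)\le \mu'\len(o)$, and the lemma follows.
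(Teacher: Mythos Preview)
Your overall strategy---lift to $\G$ and exploit hyperbolicity there together with conditions (C1)--(C3)---is the right one, and is what the paper does. However, the step you yourself flag as ``the genuine obstacle'' is a real gap, not a detail. You write that you ``expect to establish'' the linear bound on mutual coverage by a charging argument via bounded packing, but no such argument is given, and making one work directly is delicate: the number of sides of your polygon in $\G$ is $O(\len(o))$, so the thin-polygon constant from a na\"ive hyperbolic estimate is $O(\delta\log\len(o))$ rather than uniform; and the pairwise bounded overlaps among $k$ long sides could in principle contribute $O(k^2)$ without further structure. Bounded packing by itself does not obviously collapse this to a linear bound. You also need to account for the lifts of the \emph{non}-isolated $\Gamma$-components, which can be long and lie in the same $\Gamma$-coset as some isolated $r_i$; your sketch treats only the isolated ones as long sides.

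The paper sidesteps all of this with a contradiction argument via Olshanskii's polygon lemma (Lemma~\ref{polygon}). One replaces each $\Gamma$-component $r_i$ and each intervening segment $s_i$ by \emph{geodesics} $a_i,b_i$ in $\G$, obtaining a geodesic $2n$-gon with side sets $R$ (the $a_i$ over isolated $r_i$), $S$ (the $b_i$, with $\Sigma_S\le\len(o)$), and $T$ (the remaining $a_i$). If $\Sigma_R>\mu\len(o)$ for suitably large $\mu$, Lemma~\ref{polygon} produces two distinct sides $a_j\in R$ and $a_k\in R\cup T$ containing $\beta_2$-connected subsegments of length at least $10^{-3}\mu$. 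Transferring these to the quasigeodesic lifts $\hat r_j,\hat r_k$ (which lie at bounded Hausdorff distance from $a_j,a_k$ by stability), one obtains long $(\beta_2+2\xi)$-connected quasigeodesics labeled by letters from $\Gamma$. Lemma~\ref{Gamma}---which is where (C2) and (C3) are actually used, via Lemma~\ref{morseqg}---then forces $r_j$ and $r_k$ to be connected $\Gamma$-components in $\Gp$, contradicting $r_j\in R$. The point is that Olshanskii's lemma handles the combinatorics of ``which sides see which'' in one stroke and reduces the problem to a \emph{single} pair of sides, where the fellow-travelling bound you invoke is indeed uniform.
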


We defer the proof of Lemma \ref{Omega2} and now finish the proof of
Theorem \ref{charpi} by using Proposition \ref{liftqc2}.
\begin{proof}[Proof of Theorem \ref{charpi}]
We shall prove the relative hyperbolicity of $\GP$ using Farb's
definition.

Let $p q r$ be a geodesic triangle in $\Gp$. We are going to verify
the thinness of $p q r$. Let $\hat p, \hat q, \hat r$ be lifting of
$p, q, r$ in $\G$ respectively. Then by Proposition \ref{liftqc2},
there exists $\lambda \ge 1,c \ge 0$ such that  $\hat p \hat q \hat
r$ is a $(\lambda,c)$-quasigeodesic triangle in $\G$.

Since $\GH$ is relatively hyperbolic, then $\hat p \hat q \hat r$ is
$\nu$-thin for the constant $\nu >0$ depending on $\lambda,c$. That
is to say, the side $\hat p$ belongs to a $\nu$-neighborhood of the
union $q \cup r$. Since $\G \hookrightarrow \Gp$, we have $\dxp(x,y)
\le \dxh(x,y)$ for $x,y \in G$. By the construction of lifting
paths, we have the vertex set of triangle $p q r$ is contained in a
1-neighborhood of the one of triangle $\hat p \hat q \hat r$ in
$\Gp$. Then $p q r$ is $(\nu+1)$-thin in $\Gp$.

Given any $\lambda \ge 1, c\ge 0$, we take two $(\lambda,
c)$-quasigeodesics $p, q$ without backtracking in $\Gp$ with same
endpoints. Let $\hat p, \hat q$ be lifting of $p,q$ in $\G$
respectively. By Proposition \ref{liftqc2}, there exist constants
$\lambda' \ge 1, c' \ge 0$, such that $\hat p, \hat q$ are
$(\lambda', c')$-quasigeodesic without backtracking in $\G$.

Let $\hat X = X \cup \Omega$. Using Lemma \ref{Omega} again, we
obtain a finite set $\Sigma$ and constant $\mu > 1$ such that the
inequality (\ref{keyinequality}) holds in $\Ghat$. Let
$\epsilon=\epsilon(\lambda',c')$ the constant given by Lemma
\ref{stableqg}.

Suppose $s$ is a $\Gamma$-component of $p$ such that no
$\Gamma$-component of $q$ is connected to $s$. To verify BCP
property 1), it suffices to bound $d_{\hat X \cup \Sigma}(s_-,s_+)$
by a uniform constant using Corollary \ref{BCPindep2}. BCP property
2) can be verified in a similar way.

Since endpoints of $s$ belong to the vertex set of $\hat p$, by
Lemma \ref{stableqg}, there exists vertices $\hat u, \hat v \in \hat
q$ such that
$$d_{\hat X}(s_-,\hat u)<\epsilon, \; d_{\hat X}(s_+,\hat
v)<\epsilon.$$ If $\hat u$ is not a vertex of $q$, then $\hat u$
must belong to a lifting of a $\Gamma$-component of $q$. So we can
take a phase vertex $u \in p$ such that $\dxp(u,\hat u) \le 1$.
Otherwise, we set $u =\hat u$. Similarly, we choose a phase vertex
$v$ of $q$ such that $\dxp(v,\hat v) \le 1$. We connect $u,\hat
u$(resp. $v, \hat v$) by a path $e_u$(resp. $e_v$), which consists
of at most one edge labeled by a letter from $\Gamma$. The path
$e_u$ is trivial if $u =\hat u$.

By regarding $p, q$ as paths in $\Gphat$, there exist paths $l$ and
$r$ in $\Gphat$ labeled by letters from $\hat X$, such that $l_- =
s_-, l_+ = \hat u, r_- = s_+, r_+ = \hat v$. Let $$o = s r e_v
[u,v]_q^{-1} e_u^{-1} l^{-1}$$ be a cycle in $\Gphat$, where
$[u,v]_q$ denotes the segment of $q$ between $u$ and $v$. Since
$[u,v]_q$ is a $(\lambda,c)$-quasigeodesic in $\Gphat$, then by the
triangle inequality,
$$\begin{array}{rl}
\len([u,v]_q)  & \le \lambda \dxp(u,v) +c\\
& \le \lambda(\dxp(u,\hat u) + \dxo(\hat u, s_-) + 1 + \dxp(s_+,\hat v) + \dxp(v,\hat v)) +c \\
& \leq \lambda(3 + 2 \epsilon) + c.
\end{array}$$
It follows that $$\begin{array}{rl}
\len(o) & \le \len([u,v]_q) + \dxp(u,\hat u) + \dxo(\hat u, s_-) + 1 + \dxp(s_+,\hat v) + \dxp(v,\hat v)  \\
& \le (\lambda +1) (3 + 2 \epsilon) + c.
\end{array}$$
By Lemma \ref{Omega2}, there exists a constant $\mu \ge 1$ such that
\begin{equation} \label{C1}
\dxh(s_-,s_+) \le \mu \len(o) \le \mu(\lambda +1) (3 + 2 \epsilon) +
c\mu.
\end{equation}

Let $\hat s$ be the lifting of the $\Gamma$-component $s$ in $\G$.
As a subpath of $\hat p$, $\hat s$ is a
$(\lambda',c')$-quasigeodesic in $\G$. Then we have $\len(\hat s)
\le \lambda' \dxh(s_-,s_+) + c'.$

We consider the cycle in $\Ghat$ as follows $$\hat o := \hat s r
[\hat u, \hat u]^{-1}_{\hat q} l^{-1},$$ where $[\hat u,\hat
v]_{\hat q}$ denotes the subpath of $\hat q$ between $\hat u$ and
$\hat v$. As $\hat q$ is a $(\lambda',c')$-quasigeodesic in $\G$, we
have
$$\begin{array}{rl}
\len([\hat u, \hat v]_{\hat q})  & \le \lambda' \dxh(\hat u, \hat v)
+
c' \\
& \le \lambda'(\dxp(\hat u, s_-) + \dxh(s_-,s_+) + \dxp(s_-,\hat v)) + c' \\
& \leq \lambda'(2 \epsilon + \dxh(s_-,s_+)) + c'.
\end{array}$$
It follows that
\begin{equation} \label{C2}
\begin{array}{rl}\len(\hat o) & \le d_{\hat
X}(\hat u, s_-) + \len(s) + d_{\hat X}(s_+,\hat v)  + \len([\hat u,
\hat v]_{\hat q}) \\
& < 2 \epsilon(\lambda'+1)+ \lambda'\dxh(s_-,s_+)+ c'.\end{array}
\end{equation}

It is assumed that no $\Gamma$-component of $q$ is connected to the
$\Gamma$-component $s$ of $p$. By Lemma \ref{fellowtravel}, we
obtain that, for any $H_i \in \mathbb H$, no $H_i$-component of
$\hat s$ is connected to an $H_i$-component of $\hat q$. Moreover,
$\hat s$ has no backtracking by Lemma \ref{qcpair}. Hence every
$H_i$-component of $\hat s$ is isolated in the cycle $\hat o$. Using
Lemma \ref{Omega} for $\Ghat$, we have
$$d_{\hat X \cup \Sigma}(s_-,s_+) < \dxh(s_-,s_+)  \cdot \kappa
\len(\hat o).$$ Observe that $\dxh(s_-,s_+)$ and $\len(\hat o)$ are
upper bounded by uniform constants, as shown in \ref{C1}, \ref{C2}.
Thus the distance $d_{\hat X \cup \Sigma}(s_-,s_+)$ is also
uniformly upper bounded by a constant. Therefore, we have completed
the verification of BCP property 1) for $\GP$
\end{proof}

\subsection{Proof of Lemma \ref{Omega2}}
Under the assumptions of Theorem \ref{charpi}, we now prove Lemma
\ref{Omega2}. Our proof is essentially inspired by Osin's arguments
in \cite{Osin2}. In particular, we need the following two Lemmas
\ref{morseqg} and \ref{Gamma} analogous to Lemmas 3.1 and 3.2 in
\cite{Osin2} respectively.

Let $X$ be a finite relative generating set for $\GH$. Recall that
two paths $p$, $q$ in $\G$ are called \textit{$k$-connected} for $k
\ge 0$, if $$\max\{\dxh(p_-, q_-), \dxh(p_+, q_+)\} \leq k.$$ Since
$\GH$ is relatively hyperbolic, we obtain the finite set $\Omega$ by
using Lemma \ref{Omega} for $\G$.

The following lemma requires only the assumption that $\GH$ is
relatively hyperbolic. It can be proven by combining the proofs of
\cite[Proposition 3.15]{Osin} and \cite[Lemma 3.1]{Osin2} partially.
The details is left to the interested reader, or see the proof in Appendix A of Thesis \cite{Yang}. We also remark that a result of the same spirit as Lemma \ref{morseqg} was obtained in \cite[Proposition 1.11]{Dah2}.
\begin{lem} \label{morseqg}
For any $\lambda \geq 1$, $c \geq 0$, there exists $\alpha_1 =
\alpha_1 (\lambda, c)>0$  such that, for any $k \geq 0$, there
exists $\alpha_2 = \alpha_2 (k, \lambda, c) > 0$ satisfying the
following condition. Let $p$, $q$ be two $k$-connected $(\lambda,
c)$-quasigeodesics in $\G$. If $p$ has no backtracking and $u$ is a
phase vertex on $p$ such that $\min\{\dxh (u, p_-), \dxh (u, p_+ )\}
> \alpha_2$. Then there exists a phase vertex $v$ on $q$ such that
$\dxo(u,v) \leq \alpha_1$.
\end{lem}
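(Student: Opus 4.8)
The plan is to deduce Lemma~\ref{morseqg} from the equal--endpoints stability statement Lemma~\ref{stableqg} (i.e.\ Osin's \cite[Proposition~3.15]{Osin}), the whole point being to keep the matching constant $\alpha_1$ depending only on $(\lambda,c)$ while letting the ``distance from the endpoints'' constant $\alpha_2$ absorb $k$. Throughout I would use that $\G$ is $\delta$--hyperbolic \cite[Corollary~2.54]{Osin}, so that by the Morse lemma in hyperbolic spaces every $(\lambda,c)$--quasigeodesic lies in the $H$--neighbourhood, for the metric $\dxh$, of any geodesic with the same endpoints, where $H=H(\lambda,c)$ is independent of $k$.

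The first step is a thin--quadrilateral argument producing a $k$--\emph{independent} metric closeness in the middle of $p$. Since $p$ and $q$ are $k$--connected, $\dxh(p_-,q_-)\le k$ and $\dxh(p_+,q_+)\le k$, so I would join $p_-$ to $q_-$ and $p_+$ to $q_+$ by geodesics $a,b$ of $\dxh$--length at most $k$. In the geodesic quadrilateral with sides $[p_-,p_+]$, $b$, $[q_+,q_-]$, $a$, the side $[p_-,p_+]$ lies in the $2\delta$--neighbourhood of the union of the three other sides; a point of $[p_-,p_+]$ farther than $k+2\delta$ from both $p_-$ and $p_+$ cannot be $2\delta$--close to the short sides $a,b$, hence must be $2\delta$--close to $[q_-,q_+]$. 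Transporting through the two Morse neighbourhoods, any phase vertex $u$ of $p$ with $\min\{\dxh(u,p_-),\dxh(u,p_+)\}>\alpha_2$, where $\alpha_2=\alpha_2(k,\lambda,c)$ absorbs $k+2\delta+H$, lies within $\dxh$--distance $\alpha_1'=2\delta+2H=\alpha_1'(\lambda,c)$ of some point $w$ of $q$, and crucially $\alpha_1'$ does not depend on $k$.

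The second and more delicate step is to upgrade this $\dxh$--closeness of $u$ to a point of $q$ into $\dxo$--closeness of $u$ to a \emph{phase} vertex $v$ of $q$, again with a $k$--independent constant. Here I would reuse the mechanism behind Lemma~\ref{stableqg}: take the geodesic of length $\le\alpha_1'$ from $u$ to $w$, close it up with the relevant subpaths of $p$ and $q$ into a cycle of uniformly bounded length, and apply Lemma~\ref{Omega} to its isolated $H_i$--components. The no--backtracking hypothesis on $p$ is used exactly as in \cite[Proposition~3.15]{Osin} to guarantee that the $H_i$--components of $p$ encountered near $u$ are isolated in this cycle, so that Lemma~\ref{Omega} converts bounded $\dxh$--length into bounded $d_{\Omega_i}$--penetration, and hence into $\dxo$--closeness of a phase vertex $v$ of $q$ to $u$. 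Since the cycle has length bounded in terms of $\alpha_1'$ and of $(\kappa,\Omega)$ only, the resulting constant $\alpha_1=\alpha_1(\lambda,c)$ is independent of $k$, as required.

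The main obstacle is precisely this second step: passing from closeness in the coarse metric $\dxh$ to closeness of phase vertices in the finer partial metric $\dxo$ \emph{without reintroducing a dependence on $k$}. This is what forces the two--constant formulation (only the distance--from--endpoints threshold $\alpha_2$ may depend on $k$), it is the reason $p$ must be backtracking--free and $u$ must be kept bounded away from $p_-$ and $p_+$, and it is exactly the part that combines the hyperbolic--geometry estimate of \cite[Lemma~3.1]{Osin2} with the component--counting argument of \cite[Proposition~3.15]{Osin}.
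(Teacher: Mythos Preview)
Your proposal is correct and follows essentially the same approach the paper indicates: the paper itself does not give a proof but says it ``can be proven by combining the proofs of \cite[Proposition~3.15]{Osin} and \cite[Lemma~3.1]{Osin2},'' and your two steps are precisely this combination---the thin-quadrilateral/Morse argument for the $k$-independent $\dxh$-closeness is Osin's \cite[Lemma~3.1]{Osin2}, and the upgrade to $\dxo$-closeness of phase vertices via Lemma~\ref{Omega} on a short cycle is the mechanism of \cite[Proposition~3.15]{Osin}. Your identification of the second step as the delicate point (and the reason only $p$, not $q$, needs to be backtracking-free while $\alpha_1$ must stay $k$-independent) is accurate.
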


Using Lemma \ref{morseqg}, the following lemma, although stated in
geometric terms, is a reminiscent of \cite[Lemma 3.2]{Osin2} and can
be proven along the same line with Conditions (C0)--(C3) on
$\Gamma$. See a proof in Appendix A of Thesis \cite{Yang}.

\begin{lem} \label{Gamma}
For any $\lambda \geq 1$, $c \geq 0$, $k > 0$, there exists $L =
L(\lambda, c, k) > 0$ such that the following holds. Let $p, q$ be
$k$-connected $(\lambda, c)$-quasigeodesics without backtracking in
$\G$ such that $p,q$ are labeled by letters from $\Gamma \setminus
\{1\}$. If $\min\{\len(p), \len(q)\} > L$, then $p$ and $q$ as
$\Gamma$-components are connected in $\Gp$.
\end{lem}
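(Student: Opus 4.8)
The plan is to prove Lemma~\ref{Gamma}: two $k$-connected, backtracking-free $(\lambda,c)$-quasigeodesics $p,q$ in $\G$, both labeled by $\Gamma\setminus\{1\}$, must be connected as $\Gamma$-components in $\Gp$ once their lengths exceed a threshold $L=L(\lambda,c,k)$. First I would fix notation: let $p_-,p_+,q_-,q_+$ be the endpoints, with $\dxh(p_-,q_-)\le k$ and $\dxh(p_+,q_+)\le k$. Being connected as $\Gamma$-components in $\Gp$ means that $p_-^{-1}q_-$ and the relevant group elements lie in a single $\Gamma$-coset; equivalently, it suffices to produce a path $c$ labeled by letters from $\Gamma$ joining an endpoint of $p$ to an endpoint of $q$. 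Since $p,q$ are each labeled by $\Gamma$, their endpoints already satisfy $p_-^{-1}p_+\in\Gamma$ and $q_-^{-1}q_+\in\Gamma$; the content is to show $p_-$ and $q_-$ (equivalently $p_+$ and $q_+$) lie in the same left $\Gamma$-coset, so that all four endpoints do.

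\textbf{Main strategy via Lemma~\ref{morseqg}.} The engine is the Morse-type property of Lemma~\ref{morseqg}: phase vertices of $p$ that are sufficiently far (distance $>\alpha_2$) from both endpoints of $p$ must be $\dxo$-close (within $\alpha_1$) to phase vertices of $q$. I would apply this as follows. Choose $L$ large enough, in terms of $\alpha_2(k,\lambda,c)$ and the quasigeodesic constants, that $p$ contains a phase vertex $u$ with $\min\{\dxh(u,p_-),\dxh(u,p_+)\}>\alpha_2$; such a $u$ exists once $\len(p)$, and hence the $\dxh$-distance between $p_-$ and $p_+$, is large enough (using that $p$ is a quasigeodesic). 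Lemma~\ref{morseqg} then yields a phase vertex $v$ on $q$ with $\dxo(u,v)\le\alpha_1$. The key point is now that $p_-^{-1}u$ lies in $\Gamma$ (since $u$ is a phase vertex on $p$ and $p$ is labeled by $\Gamma$), and similarly $q_-^{-1}v\in\Gamma$. Thus $u\in p_-\Gamma$ and $v\in q_-\Gamma$, and $u,v$ are at bounded $\dxo$-distance $\le\alpha_1$.

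\textbf{Forcing the cosets to coincide via Conditions (C0)--(C3).} Here is where the hypotheses on $\Gamma$ enter. Having one pair $(u,v)$ close is not enough; I would produce a long run of such close pairs. By repeatedly applying Lemma~\ref{morseqg} to a whole family of phase vertices $u_1,\dots,u_m$ of $p$ spaced out along its length (all with $\min\{\dxh(u_i,p_\pm)\}>\alpha_2$), I get matching phase vertices $v_1,\dots,v_m$ on $q$ with $\dxo(u_i,v_i)\le\alpha_1$. Each $u_i\in p_-\Gamma$ and each $v_i\in q_-\Gamma$. Consider the element $w_i=u_i^{-1}v_i$; its $\dxo$-norm is bounded by $\alpha_1$, so all $w_i$ lie in a \emph{finite} set $F$ (since $\Omega$ is finite, the $\alpha_1$-ball in $\dxo$ is finite). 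Writing $g=p_-^{-1}q_-$, one computes that $u_i^{-1}v_i\in\Gamma^{-1}g\Gamma$, and the bounded-norm condition pins each $v_i$ into $p_-\Gamma\cap q_-\Gamma w_i^{-1}$-type relations. If $p_-\Gamma\ne q_-\Gamma$, then $\Gamma\cap \Gamma^{h}$ (for the relevant $h$ measuring the coset offset) would have to be infinite to accommodate the $m$ distinct comparisons as $m\to\infty$; this contradicts weak malnormality (C2) together with (C3) governing intersections with the $H_i\in\mathbb H\setminus\mathbb K$. Concretely, the growing number of forced coincidences produces an infinite intersection $\Gamma\cap\Gamma^{g'}$ for some $g'\notin\Gamma$, or an infinite $\Gamma^g\cap H_i$, either of which is prohibited. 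Choosing $L$ large enough that $m$ exceeds the (finite) bound coming from $|F|$ and from the finiteness guaranteed by (C2)--(C3) forces $p_-\Gamma=q_-\Gamma$, which is exactly the assertion that $p,q$ are connected as $\Gamma$-components in $\Gp$.

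\textbf{The main obstacle.} The delicate step is the last one: converting ``many bounded-distance matched pairs across two $\Gamma$-cosets'' into a genuine infinite intersection that Conditions (C2)--(C3) can rule out. One must carefully track which conjugate $\Gamma^{g'}$ or which $H_i$-intersection the matched elements land in, and ensure that distinctness of the $u_i$ (guaranteed by spacing them along the quasigeodesic $p$, using that $p$ has no backtracking and the quasigeodesic inequality) forces genuinely distinct elements of the intersection rather than repetitions. This is precisely the analogue of the counting argument in \cite[Lemma 3.2]{Osin2}, and the weak malnormality and condition (C3) are exactly what make the offset $g'$ land outside $\Gamma$ (or land in a distinct peripheral subgroup) so that the intersection is forced to be finite, yielding the contradiction and hence the desired coset equality.
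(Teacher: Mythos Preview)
Your proposal is correct and follows essentially the same route the paper indicates: the paper does not give an in-text proof but says Lemma~\ref{Gamma} ``can be proven along the same line'' as \cite[Lemma~3.2]{Osin2}, using Lemma~\ref{morseqg} together with Conditions (C0)--(C3), and defers details to the thesis appendix. Your plan---apply Lemma~\ref{morseqg} to many well-spaced phase vertices of $p$ to obtain partners on $q$ at bounded $\dxo$-distance, then run a pigeonhole/counting argument against weak malnormality---is exactly that line.

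One sharpening worth making explicit in your ``main obstacle'' paragraph: the uniformity of $L$ is not obtained from $|\Gamma\cap\Gamma^{g}|$ with $g=p_-^{-1}q_-$ (which a~priori could vary with $p,q$), but from $|\Gamma\cap\Gamma^{w}|$ for $w$ ranging over the \emph{fixed} finite set $F=\{w:\ \dxo(1,w)\le\alpha_1\}$. Indeed, if $w_i=u_i^{-1}v_i\in F$ and two indices share the same $w$, the resulting element $\gamma_j\gamma_i^{-1}$ lands in $\Gamma\cap\Gamma^{g}$, and since $\Gamma^{g}=\gamma_i\Gamma^{w}\gamma_i^{-1}$ this intersection is conjugate in $\Gamma$ to $\Gamma\cap\Gamma^{w}$; hence the relevant bound is $\max_{w\in F\setminus\Gamma}|\Gamma\cap\Gamma^{w}|$, which is finite by (C2) and independent of $p,q$. (If some $w_i\in\Gamma$ you are already done.) With this refinement your argument goes through; note that (C2) is the condition actually doing the work here, while (C3) is not needed for this particular lemma.
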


We define a \textit{geodesic $n$-polygon} $P$ in a geodesic metric
space as a collection of $n$ geodesics $p_1, \ldots, p_n$ such that
$(p_i)_+ = (p_{i+1})_-,$ where $i$ is taken modulo $n$. The
following lemma follows from the proof of \cite[Lemma 25]{Ol}, but
is weaker.
\begin{lem} \cite{Ol} \label{polygon}
There are constants $\beta_1 = \beta_1(\delta) > 0, \beta_2 =
\beta_2(\delta) > 0$ such that the following holds for any geodesic
$n$-polygon $P$ in a $\delta$-hyperbolic space. Suppose the set of
all sides of $P$ is divided into three subsets $R$, $S$ and $T$ with
length sums $\Sigma_R$, $\Sigma_S$ and $\Sigma_T$ respectively. If
$\Sigma_R > max\{\beta n, 10^3 \Sigma_S\}$ for some $\beta \geq
\beta_1$. Then there exist distinct sides $p_i \in R, p_j \in R \cup
T$ that contain $\beta_2$-connected segments of length greater than
$10^{-3} \beta$.
\end{lem}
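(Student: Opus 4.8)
The plan is to combine the thinness of geodesic polygons in a $\delta$-hyperbolic space with two successive length-counting arguments. The governing intuition is that, since $\Sigma_R$ dominates both $\beta n$ and $10^3\Sigma_S$, the red sides are far too long to be accounted for by proximity to the short sides in $S$ alone; hence a definite proportion of the red length must run parallel to sides in $R\cup T$, and by a pigeonhole over the (at most $n$) sides this parallelism must concentrate on a single pair $(p_i,p_j)$ with $p_i\in R$ and $p_j\in R\cup T$. Fix $\beta\ge\beta_1$, where $\beta_1=\beta_1(\delta)$ is chosen large at the end, and assume $\Sigma_R>\max\{\beta n,10^3\Sigma_S\}$.

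First I would record a quantitative thinness statement: triangulating $P$ into $n-2$ geodesic triangles by diagonals and iterating the thin-triangle inequality, every point of a side lies within $D=D(\delta,n)$ of the union of the other sides. The refinement I will need is that the deep part of the boundary is small: using the geometric decay built into a balanced triangulation, all but $O(n\delta)$ of the total boundary length lies within a fixed multiple $C_0\delta$ of the union of the \emph{remaining actual sides}. Restricting to red and discarding proximity to $S$ is then the heart of the first count: since a geodesic can stay within $C_0\delta$ of a fixed geodesic of length $L$ only along a sub-arc of length at most $L+O(\delta)$, the red length lying within $C_0\delta$ of some $S$-side is at most $\Sigma_S+O(\abs{S}\,\delta)\le 10^{-3}\Sigma_R+O(n\delta)$. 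As $n<\Sigma_R/\beta$, taking $\beta_1$ large compared with $\delta$ forces the error term below $10^{-3}\Sigma_R$, so at least a $0.99$-fraction of $\Sigma_R$ lies within $C_0\delta$ of sides in $R\cup T$.

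Next I would concentrate this red length on a single pair of sides. Assigning to each such red point a nearest point on a side of $R\cup T$ defines a coarse nearest-point projection; because nearest-point projection onto a geodesic in a hyperbolic space is coarsely monotone, each target side receives the projection of a coarsely connected red sub-arc, and these parallel incidences can be organised as a coarsely non-crossing family, of which there are only $O(n)$. A pigeonhole over these $O(n)$ pairs then yields a pair $(p_i,p_j)$ carrying common parallel length exceeding $10^{-3}\beta$ (the numerology of $10^3$ and $10^{-3}$ is exactly what survives this non-crossing count together with the $0.99$-fraction above). Finally, since $p_i$ and $p_j$ are geodesics running within $C_0\delta$ of one another over length much larger than $\delta$, applying the thin-quadrilateral inequality to the sub-arc and its projection (the two cross-connections having length at most $C_0\delta$) trims only $O(\delta)$ from each end and produces sub-segments that are $\beta_2$-connected, i.e.\ with corresponding endpoints within $\beta_2=\beta_2(\delta)$; the earlier slack keeps the surviving length above $10^{-3}\beta$.

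The \emph{main obstacle} is reconciling the $n$-dependence of polygon thinness, $D=D(\delta,n)\sim\delta\log n$, with the required $n$-independence of both the fellow-travelling constant $\beta_2$ and the length bound $10^{-3}\beta$. Two points must be handled with care. The first is the refined decay estimate: a crude bound would only give that the deep red length is $O(n\delta\log n)$, which the hypothesis $\Sigma_R>\beta n$ cannot absorb, so one must instead exploit the exponential decay of the measure of depth-$t$ points in the triangulation to bound the deep length by $O(n\delta)$, which is dominated once $\beta_1\gg\delta$. The second is the non-crossing bound limiting the number of significant parallel pairs to $O(n)$ rather than $O(n^2)$, for which coarse monotonicity of projections must be upgraded to a genuine planarity-type estimate on the pattern of fellow-travelling sub-arcs. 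Once both are in place, choosing $\beta_1$ large relative to $\delta$ and $\beta_2=O(\delta)$ completes the proof.
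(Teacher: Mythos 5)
First, note what you are being compared against: the paper does not prove this lemma at all --- it is quoted from Olshanskii, with the explicit remark that it ``follows from the proof of \cite[Lemma 25]{Ol}, but is weaker.'' Olshanskii's argument is a delicate one (preliminary lemmas on geodesic triangles and quadrangles, then an induction over the polygon, with explicit constants), and the paper does not reproduce it. Judged on its own merits, your proposal has a sensible architecture --- discard the red length that is only close to $S$-sides, pigeonhole the remaining red length onto a single pair of sides, then convert $C_0\delta$-proximity along a long stretch into $\beta_2$-connected segments by thin quadrilaterals --- and you have correctly located where the difficulty sits. But locating the obstacles is not the same as overcoming them: the two load-bearing claims of your sketch are left unproved, and together they are essentially the content of the lemma itself.

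Concretely: (i) your ``refined decay estimate'' --- that at a threshold $C_0\delta$ \emph{independent of $n$} all but $O(n\delta)$ of the boundary length lies close to the union of the remaining sides --- cannot come from any triangulation argument, balanced or not: cutting along diagonals and iterating the thin-triangle condition degrades the threshold by an additive $\delta$ at every level, which is exactly the source of the unavoidable $\delta\log_2 n$ in polygon thinness. You acknowledge this and appeal instead to ``exponential decay of the measure of depth-$t$ points,'' but that statement is precisely what must be proved; its proof requires genuinely different tools (exponential divergence of geodesics, exponential contraction of nearest-point projections, coarse convexity of the distance between geodesics), and as written the step is circular. (ii) Your $O(n)$ bound on the number of significant fellow-travelling pairs is likewise only asserted. ``Non-crossing'' is a tree or surface phenomenon; a geodesic polygon in an arbitrary $\delta$-hyperbolic space is not planar in any useful sense, and coarse monotonicity of projections constrains each pair's incidence to be interval-like but does not rule out $\Theta(n^2)$ pairs, each carrying only an $O(\Sigma_R/n^2)$ share of the parallelism (for instance, $\Theta(n^2)$ overlaps of length $O(\delta)$ when $\beta\sim n\delta$). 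This matters quantitatively: with only the trivial $n^2$ count, the pigeonhole yields a common segment of length about $\beta/n$ rather than $10^{-3}\beta$, destroying exactly the $n$-independence that makes the lemma useful. Until (i) and (ii) are actually established, the proposal is a plan for a proof rather than a proof, and the gap it leaves coincides with the hard core of \cite[Lemma 25]{Ol}.
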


We are now ready to prove Lemma \ref{Omega2}. Its proof uses
crucially the quasi-isometric map $\iota: \K \hookrightarrow \G$.
\begin{proof}[Proof of Lemma \ref{Omega2}]
Without loss of generality, we assume the cycle $o$ in $\Gp$ can be
written as the following form
$$
o = r_1 s_1 \ldots r_n s_n
$$
such that $\{r_1, \ldots, r_n\}$ is the maximal set of
$\Gamma$-components of $o$. Note that $s_n$ is not trivial. We
assume that $\{r_{n_1}, \ldots r_{n_k}\}$ is a set of isolated
$\Gamma$-components of $o$.

Consider the geodesic $2n$-polygon $a_1 b_1 \ldots a_n b_n$, where
$a_i$ and $b_i$ are geodesics in $\G$ with the same endpoints as
$r_i$ and $s_i$ respectively. We divide the sides of the 2n-polygon
into three disjoint sets. Let $R = \{a_{n_1},\ldots, a_{n_k}\}$, $S =
\{b_1, \ldots, b_n\}$ and $T =\{a_i: a_i \notin R\}$. Set $\Sigma_R
=\sum\limits_{i=0}^n \len(a_{n_i})$ and $\Sigma_S
=\sum\limits_{i=1}^k \len(b_{n_i}).$ Obviously $\Sigma_S \le
\len(o)$.

Let $\hat r_i$ be a geodesic segment in $\K$ such that $(\hat r_i)_-
= (r_i)_-$ and $(\hat r_i)_+ = (r_i)_+$.  Since the embedding
$\iota: \K \hookrightarrow \G$ is quasi-isometric, then $\hat r_i$
is a $(\lambda, c)$-quasigeodesic in $\G$ for some $\lambda \ge 1,
c\ge 0$. Moreover, $\hat r_i$ has no backtracking by Lemma
\ref{qcpair}.

Let $\delta$ denote the hyperbolicity constant of $\G$. By the
stability of quasigeodesics in hyperbolic spaces (see \cite{Gro} or
\cite{GH}), there exists a constant $\xi = \xi(\delta,\lambda,c)$
such that $\hat r_i$ have a uniform Hausdorff $\xi$-distance from
$a_i$.

Let $\beta_1 = \beta_1(\delta), \beta_2 = \beta_2(\delta)$ be the
constants provided by Lemma \ref{polygon}, $L=L(\lambda, c,
\beta_2+2\xi)$ the constant provided by Lemma \ref{Gamma}.

It suffices to set $\mu = \max\{ \beta_1, 10^3, (L+2\xi)\cdot
10^3\}$ for showing $\Sigma_R \le \mu \len(o)$. Suppose, to the
contrary, we have $\Sigma_R > \mu  \len(o)$. This yields
\[
\Sigma_R > \mu  \len(o) \ge \max\{ \mu \len(o), 10^3\len(o) \ge
\max\{ \mu \len(o), 10^3 \Sigma_S\}.
\]

By Lemma \ref{polygon}, there are distinct sides $a_j \in R$ and
$a_k \in R \cup T$, having $\beta_2$-connected segments of length at
least $\mu \cdot 10^{-3}$. Therefore, there exist $(\beta_2
+2\xi)$-connected subsegments $q_1 \subset \hat r_j, q_2\subset \hat
r_k$ such that $$\min\{\len(q_1),\len(q_2)\} \ge \mu \cdot
10^{-3}-2\xi \ge L.$$ Since $q_1, q_2$ are $(\lambda
,c)$-quasigeodesics labeled by letters from $\Gamma$, they are
connected by Lemma \ref{Gamma}. Thus, $r_j$ and $r_k$ are connected.
This is a contradiction, since $r_j$ is an isolated
$\Gamma$-component of the cycle $o$.
\end{proof}

\section{Peripheral structures and Floyd boundary}

\subsection{Convergence groups and dynamical quasiconvexity}
Let $M$ be a compact metrizable space. We denote by $\Theta^n M$ the
set of subsets of $M$ of cardinality $n$, equipped with the product
topology.

A \textit{convergence group action} is an action of a group $G$ on
$M$ such that the induced action of $G$ on the space $\Theta^3 M$ is
properly discontinuous. Following Gerasimov \cite{Ge1}, a group
action of $G$ on $M$ is \textit{2-cocompact} if the quotient space
$\Theta^2 M/G$ is compact.

Suppose $G$ has a convergence group action on $M$. Then $M$ is
partitioned into a limit set $\Lambda_M(G)$ and discontinuous domain
$M \setminus \Lambda_M(G)$. The \textit{limit set} $\Lambda_M(H)$ of
a subgroup $H \subset G$ is the set of limit points, where a
\textit{limit point} is an accumulation point of some $H$-orbit in
$M$. An infinite subgroup $P \subset G$ is a \textit{parabolic
subgroup} if the limit set $\Lambda_M(P)$ consists of one point,
which is called a \textit{parabolic point}. The stabilizer of a
parabolic point is always a (maximal) parabolic group. A parabolic
point $p$ with stabilizer $G_p$ := $Stab_G (p)$ is \textit{bounded}
if $G_p$ acts cocompactly on $M \setminus \{p\}$. A point $z \in M$
is a \textit{conical} point if there exists a sequence $\{g_i\}$ in
$G$ and distinct points $a, b \in M$ such that $g_i (z) \to a$ ,
while for all $q \in M \setminus \{z\}$, we have $g_i (q) \to b$.

\begin{conv}
For simplicity, we often denote by $G \curvearrowright M$ a
convergence group action of $G$ on a compact metrizable $M$.
\end{conv}

Let us begin with the following simple observation.
\begin{lem} \label{limitsetmap}
Suppose a group $G$ admits convergence group actions on compact
spaces $M$ and $N$ respectively. If there is a $G$-equivariant
surjective map $\phi$ from $M$ to $N$, then for any $H<G$, $\phi
(\Lambda_M(H)) = \Lambda_N(H)$.
\end{lem}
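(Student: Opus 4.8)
The plan is to prove the two inclusions $\phi(\Lambda_M(H)) \subseteq \Lambda_N(H)$ and $\Lambda_N(H) \subseteq \phi(\Lambda_M(H))$ separately, using continuity and $G$-equivariance of $\phi$ (such equivariant boundary maps are continuous) together with the dynamics of the two convergence actions. If $H$ is finite then every $H$-orbit is finite, so both limit sets are empty and there is nothing to prove; hence I assume $H$ is infinite. Throughout I use the standard dynamical description of the limit set of a convergence action: a point lies in $\Lambda_M(H)$ exactly when it is the ``sink'' of some sequence of distinct elements, that is, for any sequence of distinct $h_i \in H$ one may pass to a subsequence for which there are points $a, b \in \Lambda_M(H)$ with $h_i(u) \to a$ for every $u \in M \setminus \{b\}$ (locally uniformly), and conversely every limit point arises as such an $a$.

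For the inclusion $\phi(\Lambda_M(H)) \subseteq \Lambda_N(H)$, let $z \in \Lambda_M(H)$ and choose distinct $h_i \in H$ and $b \in M$ with $h_i(u) \to z$ for all $u \ne b$. Applying the continuous equivariant map gives $h_i \phi(u) = \phi(h_i u) \to \phi(z)$ for every $u \ne b$. I then transport this to $N$: refining the subsequence so that $h_i$ also has sink--source dynamics on $N$, say $h_i(v) \to a'$ for $v \ne b'$ with $a' \in \Lambda_N(H)$, it suffices to produce a single $u \in M$ with $u \ne b$ and $\phi(u) \ne b'$, for then $h_i \phi(u) \to a'$ forces $\phi(z) = a' \in \Lambda_N(H)$. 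Such a $u$ exists unless $M = \{b\} \cup \phi^{-1}(b')$, which would force $\phi(M)$ to have at most two points; that degenerate case is checked by hand.

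The reverse inclusion is symmetric and is where surjectivity of $\phi$ is essential. Given $w \in \Lambda_N(H)$, pick distinct $h_i \in H$ and $b' \in N$ with $h_i(v) \to w$ for all $v \ne b'$, and refine so that $h_i$ also has dynamics on $M$ with sink $a \in \Lambda_M(H)$ and source $b \in M$. Choosing once more a test point $u \ne b$ with $\phi(u) \ne b'$ --- available by surjectivity except in the same degenerate case --- we obtain $\phi(h_i u) = h_i \phi(u) \to w$ and simultaneously $\phi(h_i u) \to \phi(a)$ by continuity, so $w = \phi(a) \in \phi(\Lambda_M(H))$.

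The main obstacle is the exceptional-point bookkeeping: since $\phi$ need not be injective, an accumulating orbit in $M$ can collapse in $N$ (and vice versa), and one must be careful never to feed the source point of the $N$-dynamics into the argument. Both issues are resolved uniformly by working with the convergence dynamics rather than with raw accumulation points, together with the observation that surjectivity of $\phi$ supplies a test point off the single exceptional fibre $\phi^{-1}(b')$ whenever $N$ has at least three points; the remaining small cases are immediate.
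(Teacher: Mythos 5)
Your proof is correct, but it takes a genuinely different and noticeably heavier route than the paper's. The paper works directly with the definition of a limit point as an accumulation point of an orbit: for $\phi(\Lambda_M(H)) \subseteq \Lambda_N(H)$ it simply pushes an orbit sequence $h_n z \to x$ forward, getting $h_n\phi(z) = \phi(h_n z) \to \phi(x)$ by continuity and equivariance, so that $\phi(x)$ is already exhibited as a limit point of the orbit of $\phi(z)$ --- no identification with the attracting point of the sequence acting on $N$, and hence no source-avoidance, is needed. For the reverse inclusion it lifts the base point $z$ of an orbit sequence $h_n z \to y$ to some $w \in \phi^{-1}(z)$ (the only place surjectivity enters), extracts by compactness of $M$ a subsequence with $h_n w \to x \in \Lambda_M(H)$, and concludes $\phi(x) = y$ by continuity. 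You instead invoke the full sink--source dynamics of convergence sequences on both $M$ and $N$, which forces the exceptional-point bookkeeping (a test point off $\{b\}\cup\phi^{-1}(b')$, plus the degenerate case where $N$ has at most two points). That extra work is a consequence of characterizing the limit set as the set of attracting points of sequences rather than as the set of accumulation points of orbits; the two characterizations agree in the relevant setting, but the paper's choice makes the forward direction a one-liner and replaces your dynamical refinement in the reverse direction by mere sequential compactness of $M$. Your version does have the merit of making explicit where each hypothesis is used and of isolating the degenerate cases that the paper silently ignores; note also that both proofs use continuity of $\phi$, which is not among the stated hypotheses --- you flag this, and in all of the paper's applications $\phi$ is a continuous Floyd map.
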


\begin{proof}
Given $x \in \Lambda_M(H)$, by definition, there exists $z \in M$
and $\{h_n\} \subset H$ such that $h_n(z) \to x$ as $n \to \infty$.
Since $\phi$ is a $G$-equivariant map, we have $h_n(\phi(z)) =
\phi(h_n z) \to \phi(x)$. Then $\phi(x)$ is the limit point of of
sequence $\{h_n(\phi(z))\}$ and thus $\phi(x) \in \Lambda_N(H)$.

Conversely, for any $y \in \Lambda_N(H)$, there exists $z \in N$ and
$\{h_n\} \subset H$ such that $h_n(z) \to y$. Take $w \in M$ such
that $\phi(w)=z$. We have $\phi(h_n w) =h_n \phi(w) = h_n z \to y$.
After passage to a subsequence, we assume  $x \in \Lambda_M(H)$ to
be the limit point of sequence $\{h_n w\}$. Then $\phi( h_n w) \to
\phi(x)$ by the continuity of $\phi$. It follows that $\phi(x)=y$.
Therefore, we obtain $\Lambda_N(H) \subset \phi (\Lambda_M(H))$.
\end{proof}

\begin{rem}
Note that in general $\phi^{-1} (\Lambda_N(H)) = \Lambda_M(H)$ is
not true. This is readily seen from Lemma \ref{kernel} below.
\end{rem}

\begin{defn}
A subgroup $H$ of a convergence group action $G \curvearrowright M$
is \textit{dynamically quasiconvex} if the following set
\[
\{gH \in G/H : g \Lambda_N(H) \cap K \neq \emptyset, g \Lambda_N(H)
\cap L \neq \emptyset\}
\]
is finite, whenever $K$ and $L$ are disjoint closed subsets of $M$.
\end{defn}
\begin{rem}
The notion of dynamical quasiconvexity was introduced by Bowditch
\cite{Bow2} in hyperbolic groups and is proven there to be
equivalent to the geometrical quasiconvexity.
\end{rem}

In the following lemma, we show that dynamical quasiconvexity is
kept under an equivariant quotient.
\begin{lem} \label{quotient}
Suppose a group $G$ admits convergence group actions on compact
spaces $M$ and $N$ respectively. Assume, in addition, that there is
a $G$-equivariant surjective map $\phi$ from $M$ to $N$. If $H
\subset G$ is dynamically quasiconvex with respect to $G
\curvearrowright M$, then it is dynamically quasiconvex with respect
to $G \curvearrowright N$.
\end{lem}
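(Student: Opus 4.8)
The plan is to reduce dynamical quasiconvexity for $G \curvearrowright N$ to dynamical quasiconvexity for $G \curvearrowright M$ by pulling disjoint closed sets back along $\phi$, with Lemma~\ref{limitsetmap} serving as the essential bridge. First I would fix disjoint closed subsets $K', L' \subset N$ and set $K = \phi^{-1}(K')$ and $L = \phi^{-1}(L')$. Since $\phi$ is continuous, $K$ and $L$ are closed in $M$; and since $\phi$ is a genuine map, $K \cap L = \phi^{-1}(K' \cap L') = \emptyset$, so $K$ and $L$ are disjoint closed subsets of $M$, which is exactly the data to which dynamical quasiconvexity on $M$ applies.

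Next I would translate the intersection conditions from $N$ back to $M$. By $G$-equivariance of $\phi$ together with Lemma~\ref{limitsetmap}, for every $g \in G$ one has $\phi(g\Lambda_M(H)) = g\,\phi(\Lambda_M(H)) = g\Lambda_N(H)$. The key step is then the implication: if $g\Lambda_N(H) \cap K' \neq \emptyset$, then $g\Lambda_M(H) \cap K \neq \emptyset$. Indeed, a point $y \in g\Lambda_N(H) \cap K'$ can be written as $y = \phi(x)$ with $x \in g\Lambda_M(H)$ — here the full \emph{equality} $\phi(g\Lambda_M(H)) = g\Lambda_N(H)$, rather than a mere inclusion, is what lets me lift $y$ — and then $\phi(x) = y \in K'$ forces $x \in \phi^{-1}(K') = K$. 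The same argument applied to $L'$ and $L$ yields the inclusion of index sets
\[
\{gH : g\Lambda_N(H) \cap K' \neq \emptyset,\ g\Lambda_N(H) \cap L' \neq \emptyset\} \subseteq \{gH : g\Lambda_M(H) \cap K \neq \emptyset,\ g\Lambda_M(H) \cap L \neq \emptyset\}.
\]

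Finally, the right-hand set is finite because $H$ is dynamically quasiconvex for $G \curvearrowright M$ and $K, L$ are disjoint closed in $M$; hence the left-hand set is finite as well, which is precisely dynamical quasiconvexity of $H$ for $G \curvearrowright N$. I do not anticipate a genuine obstacle here: the argument is essentially a transport of structure. The only points demanding care are the use of the equality $\phi(\Lambda_M(H)) = \Lambda_N(H)$ (rather than a one-sided containment) to guarantee that witnesses upstairs actually exist, and a routine verification that both index sets are well defined on cosets, which holds because $\Lambda_M(H)$ and $\Lambda_N(H)$ are $H$-invariant.
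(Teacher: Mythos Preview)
Your proposal is correct and follows essentially the same approach as the paper: pull back the disjoint closed sets through $\phi$, use Lemma~\ref{limitsetmap} to show the index set for $N$ is contained in the one for $M$, and conclude by the hypothesis on $M$. Your direct lifting of a witness $y=\phi(x)$ is in fact a bit cleaner than the paper's contrapositive phrasing of the same step.
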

\begin{proof}
Given any disjoint closed subsets $K, L$ of $N$, we are going to
bound the cardinality of the following set
\[
\Theta = \{gH \in G/H : g \Lambda_N(H) \cap K \neq \emptyset, g
\Lambda_N(H) \cap L \neq \emptyset\}.
\]

Let $K'=\phi^{-1}(K)$ and $L'=\phi^{-1}(L)$. Obviously $K' \cap L' =
\emptyset$. For each $gH \in \Theta$, we claim $g \Lambda_M(H) \cap
K' \neq \emptyset$. Otherwise, we have then
\[
\phi(g\Lambda_M(H)) \cap \phi(K') = g \phi(\Lambda_M(H)) \cap K =
\emptyset.
\]
By Lemma \ref{limitsetmap}, we have $g\Lambda_N(H) \cap K =
\emptyset$. This is a contradiction. Hence $g \Lambda_M(H) \cap K'
\neq \emptyset$. Similarly, we have $g\Lambda_M(H) \cap L' \neq
\emptyset$.

By the dynamical quasiconvexity of $H$ with respect to $G
\curvearrowright M$, we have $\Theta$ is a finite set. Thus, $H$ is
dynamically quasiconvex with respect to $G \curvearrowright N$.
\end{proof}

\begin{defn}
A convergence group action of $G$ on $M$ is \textit{geometrically
finite} if every limit point of $G$ in $M$ is either a conical or
bounded parabolic.
\end{defn}

We now summarize as follows the equivalence of several dynamical
formulations of relative hyperbolicity. Theorems \ref{GFRH} and
\ref{GFQC} shall enable us to translate the results established in
previous sections in dynamical terms.

\begin{thm}\label{GFRH} \cite{Bow1}\cite{Ge1}\cite{Tukia}\cite{Yaman}
Suppose a finitely generated group $G$ acts on $M$ as a convergence
group action. Let $\mathbb{P}$ be a set of representatives of the
conjugacy classes of maximal parabolic subgroups.
Then the following statements are equivalent: \\
(1) The pair $\GP$ is relatively hyperbolic in the sense of Farb,\\
(2) $G \curvearrowright M$ is geometrically finite,\\
(3) $G \curvearrowright M$ is a 2-cocompact convergence group
action.
\end{thm}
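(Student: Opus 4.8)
The plan is to prove the three conditions equivalent by assembling the cited results along the cycle $(1)\Rightarrow(2)\Rightarrow(3)\Rightarrow(1)$, since no single reference produces all three implications at once. The conceptual content splits into two independent equivalences, namely $(1)\Leftrightarrow(2)$ (relative hyperbolicity versus dynamical geometric finiteness) and $(2)\Leftrightarrow(3)$ (geometric finiteness versus $2$-cocompactness), and I would treat them separately. Throughout I may replace $M$ by the limit set $\Lambda_M(G)$, so that $M$ is perfect and the action minimal, which is the setting in which the cited characterisations are stated.

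For $(1)\Leftrightarrow(2)$ I would argue as follows. For $(2)\Rightarrow(1)$, the given action $G\curvearrowright M$ is geometrically finite, so every limit point is conical or bounded parabolic. The first step is to check that each $P\in\mathbb P$ is finitely generated: since $G$ is finitely generated and the parabolic point $p$ stabilised by $P$ is bounded, $P$ acts cocompactly on $M\setminus\{p\}$, and a \v{S}varc--Milnor type argument supplies a finite generating set. With this in hand, Yaman's topological characterisation \cite{Yaman} applies directly and yields that $\GP$ is relatively hyperbolic in Bowditch's sense, which by Bowditch \cite{Bow1} (together with Theorem \ref{FarbOsin}) coincides with Farb's sense. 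For $(1)\Rightarrow(2)$, I would invoke Bowditch's construction \cite{Bow1}: from the relatively hyperbolic pair $\GP$ one builds the boundary $\partial(G,\mathbb P)$, a compact metrizable space carrying a geometrically finite convergence action whose maximal parabolic subgroups are exactly the conjugates of the members of $\mathbb P$. It then remains to identify $M$ with $\partial(G,\mathbb P)$ equivariantly, which is where the uniqueness of geometrically finite actions enters, using that by hypothesis the maximal parabolic subgroups of $G\curvearrowright M$ are precisely the conjugates of $\mathbb P$.

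For $(2)\Leftrightarrow(3)$, the direction $(2)\Rightarrow(3)$ follows from Tukia \cite{Tukia}: I would cover $\Theta^2 M$ by the pairs controlled by conical points, where the convergence property gives compactness of the quotient, and the pairs incident to the finitely many orbits of bounded parabolic points, where the defining cocompact action of $G_p$ on $M\setminus\{p\}$ controls the remaining pairs; together these give compactness of $\Theta^2 M/G$. The reverse $(3)\Rightarrow(2)$ is the deepest input, and I would quote Gerasimov \cite{Ge1}, whose theorem that $2$-cocompact convergence actions are geometrically finite forces every limit point to be conical or bounded parabolic.

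The hard part will be the two identifications of ``geometric finiteness'' across different definitions. In $(1)\Rightarrow(2)$ the genuine obstacle is not the existence of a geometrically finite action (Bowditch provides one on $\partial(G,\mathbb P)$), but the rigidity statement that any convergence action with the prescribed system of maximal parabolic subgroups is equivariantly homeomorphic to it. In $(3)\Rightarrow(2)$ the obstacle is Gerasimov's passage from the purely combinatorial hypothesis of $2$-cocompactness to the dynamical dichotomy conical/bounded-parabolic; here I expect to lean entirely on \cite{Ge1} rather than reprove it.
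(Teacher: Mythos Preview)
Your proposal is correct and matches the paper's approach exactly. The paper does not give a proof of this theorem at all; it is a citation result, and the accompanying remark simply attributes the four implications to the literature in precisely the way you do: $(1)\Rightarrow(2)$ to Bowditch \cite{Bow1}, $(2)\Rightarrow(1)$ to Yaman \cite{Yaman}, $(2)\Rightarrow(3)$ to Tukia \cite{Tukia}, and $(3)\Rightarrow(2)$ to Gerasimov \cite{Ge1}. Your write-up is in fact more detailed than the paper's, and your flagging of the rigidity step in $(1)\Rightarrow(2)$ (identifying $M$ with the Bowditch boundary once the maximal parabolics are prescribed) is a point the paper passes over in silence.
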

\begin{rem}
The direction $(1) \Rightarrow (2)$ is due to Bowditch \cite{Bow1};
$(2) \Rightarrow (1)$ is proved by Yaman \cite{Yaman}; $(2)
\Rightarrow (3)$ is implied in the work of Tukia \cite[Theorem 1
C]{Tukia}; $(3) \Rightarrow (2)$ is proven in Gerasimov \cite{Ge1}
without assuming that $G$ is countable and $M$ metrizable.
\end{rem}

In Theorem \ref{GFRH}, the limit set of $G$ with respect to $G \curvearrowright M$
will be referred as \textit{Bowditch boundary} of the relatively hyperbolic group $G$. We
shall often write it as $\TGP$, with reference to a particular
peripheral structure $\mathbb P$. It is shown in \cite{Bow1} that
Bowditch boundary is well-defined up to a $G$-equivariant
homeomorphism.

In different contexts, we can formulate the corresponding notions of
relative quasiconvexity, which are proven to be equivalent.
\begin{thm} \label{GFQC}\cite{GePo1} \cite{GePo4} \cite{Hru}
Suppose a finitely generated group $G$ acts geometrically finitely
on $M$. Let $\Gamma$ be a subgroup of $G$.
Then the following statements are equivalent: \\
(1) $\Gamma$ is relatively quasiconvex,\\
(2) $\Gamma \curvearrowright \Lambda_M(\Gamma)$ is geometrically
finite,\\
(3) $\Gamma$ is dynamical quasiconvex with respect to $G
\curvearrowright M$.
\end{thm}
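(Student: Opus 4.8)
The plan is to establish the three-way equivalence by proving the cycle $(1)\Rightarrow(2)\Rightarrow(3)\Rightarrow(1)$, drawing on the geometric machinery of Section 2 and the dynamical dictionary of Theorem \ref{GFRH}. For $(1)\Rightarrow(2)$, I would begin with Lemma \ref{QCISRH}, which says that a relatively $\sigma$-quasiconvex $\Gamma$ is itself hyperbolic relative to the induced peripheral structure $\mathbb K$. Applying Theorem \ref{GFRH} to the pair $\KK$, the action of $\Gamma$ on its own Bowditch boundary $T_{\mathbb K}$ is geometrically finite. What remains is to identify that action with $\Gamma\curvearrowright\Lambda_M(\Gamma)$. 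For this I would use the $\Gamma$-equivariant quasi-isometric map $\iota:\K\to\G$ of Lemma \ref{qcembed}: because $\iota$ carries distinct peripheral $\mathbb K$-cosets to uniformly separated distinct peripheral $\mathbb H$-cosets by Lemma \ref{qcpair}, it should extend to a continuous $\Gamma$-equivariant injection $\partial\iota:T_{\mathbb K}\to M$ with image exactly $\Lambda_M(\Gamma)$. Transporting geometric finiteness across this equivariant homeomorphism then gives $(2)$.

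For $(2)\Rightarrow(3)$ I would argue purely dynamically. Since $g\Lambda_M(\Gamma)=\Lambda_M(\Gamma^g)$, dynamical quasiconvexity is a statement about how the limit sets of the conjugates $\Gamma^g$ can simultaneously meet two disjoint closed sets $K,L\subset M$. Assuming the offending collection of cosets were infinite, I would extract a sequence of distinct cosets $g_n\Gamma$ together with accumulation points in $K$ and in $L$, and then play the convergence dynamics of $G\curvearrowright M$ off against the geometric finiteness of $\Gamma$ on $\Lambda_M(\Gamma)$ to reach a contradiction, in the spirit of Bowditch's equivalence between dynamical and geometric quasiconvexity.

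For $(3)\Rightarrow(1)$ I would reverse the geometric-to-dynamical translation. Were $\Gamma$ not relatively quasiconvex, there would be geodesics in $\G$ with endpoints in $\Gamma$ whose midpoints leave every neighbourhood of $\Gamma$; pulling these midpoints back into a fixed compact region by elements of $\Gamma$ yields infinitely many distinct cosets $g_n\Gamma$ whose translated limit sets $g_n\Lambda_M(\Gamma)$ all meet a single fixed pair of disjoint closed subsets of $M$, directly contradicting dynamical quasiconvexity. This closes the cycle.

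I expect the genuine difficulty to lie in $(1)\Rightarrow(2)$, and precisely in the construction and analysis of the boundary extension $\partial\iota$. It is not enough to know that $\iota$ extends continuously and injectively; one must verify that the extension sends conical points of $T_{\mathbb K}$ to conical points of $\Lambda_M(\Gamma)$ and bounded parabolic points to bounded parabolic points, so that geometric finiteness genuinely transfers rather than merely the underlying topology. The uniform separation of peripheral cosets provided by Lemma \ref{qcpair} is the structural input that should make this matching of point types go through, and getting that matching right is where the real work sits.
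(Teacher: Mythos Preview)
The paper does not prove Theorem~\ref{GFQC} at all: it is stated as a cited result, and the remark immediately following it attributes the equivalence $(1)\Leftrightarrow(2)$ to Hruska \cite{Hru} and $(1)\Leftrightarrow(3)$ to Gerasimov--Potyagailo \cite{GePo1}, \cite{GePo4}. There is therefore no ``paper's own proof'' to compare your proposal against; the theorem functions here as an imported black box that the author invokes in Section~4.

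That said, your proposal is a genuine attempt to supply a proof, and it is worth noting how it sits relative to the paper's internal development. Your route for $(1)\Rightarrow(2)$ via Lemmas~\ref{qcembed}, \ref{qcpair}, and \ref{QCISRH} is natural: those lemmas were built precisely to show that a relatively quasiconvex subgroup is itself relatively hyperbolic, and you correctly identify the missing step as the boundary extension $\partial\iota$ and the matching of conical and bounded parabolic points. That step is real work and is essentially what Hruska carries out in \cite{Hru}; the paper here develops the coset-level machinery but stops short of the boundary identification, deferring instead to Hruska. Your sketches for $(2)\Rightarrow(3)$ and $(3)\Rightarrow(1)$ are in the right spirit but are only outlines; the actual arguments in \cite{GePo1} and \cite{GePo4} are more delicate, particularly the passage from dynamical quasiconvexity back to relative quasiconvexity, which in those references goes through Floyd geometry rather than the direct midpoint argument you suggest.
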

\begin{rem}
The equivalence $(1) \Leftrightarrow (2)$ is proved by Hruska
\cite{Hru} for countable relatively hyperbolic groups; $(1)
\Leftrightarrow (3)$ is proven in Gerasimov-Potyagailo \cite{GePo1}.
\end{rem}

Lastly we recall a useful result about peripheral subgroups of
finitely generated relatively hyperbolic groups.
\begin{lem} \label{undistorted}\cite{DruSapir}\cite{Osin}\cite{Ge1}\cite{Hru}
Suppose $G$ is finitely generated and hyperbolic relative to
$\mathbb H$. Then each $H \in \mathbb H$ is undistorted in $G$.
Moreover $H$ is relatively quasiconvex in any relatively hyperbolic
$\GP$.
\end{lem}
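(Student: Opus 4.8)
The plan is to treat the two assertions separately; I expect the second, relative quasiconvexity in a foreign relatively hyperbolic structure, to carry the real difficulty.

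For undistortion I would reuse the cycle-and-component argument of Lemma \ref{rfg}. Fix a finite generating set $S$ of $G$ and an element $h\in H_i$, and replace each $S$-letter of a $d_G$-geodesic word for $h$ by a bounded-length word over $X\cup\mathcal H$; this gives a path $p$ in $\G$ from $1$ to $h$ with $\len(p)\le D\,d_G(1,h)$, where $D$ bounds the relative lengths of the elements of $S$. Closing $p$ with the single $\mathcal H$-edge $e$ labelled $h$ yields a cycle $c=pe^{-1}$, and, exactly as in Case 2 of Lemma \ref{rfg}, I would split off the $H_i$-components of $p$ that are connected to $e$ so that Lemma \ref{Omega} applies to cycles in which the pertinent $H_i$-components are isolated. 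The outcome is $d_{\Omega_i}(1,h)\le \kappa(\len(p)+1)\le \kappa D\,d_G(1,h)+\kappa$ with $\Omega_i=\Omega\cap H_i$ finite. In particular $H_i=\langle\Omega_i\rangle$ is finitely generated, and since $d_G(1,h)\le C\,d_{\Omega_i}(1,h)$ is immediate (each element of the finite set $\Omega_i$ has bounded $d_G$-norm), the two word metrics are comparable; that is, $H_i\hookrightarrow G$ is a quasi-isometric embedding.

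For relative quasiconvexity of $H$ with respect to an arbitrary relatively hyperbolic structure $\mathbb P$ I would pass to dynamics. By Theorem \ref{GFQC} it suffices to prove that $H$ is dynamically quasiconvex for $G\curvearrowright\TGP$. The Floyd machinery is the natural vehicle: Gerasimov's theorem \cite{Ge1} furnishes $G$-equivariant continuous surjections $\pGf\to\TGP$ and $\pGf\to\TGH$ from the Floyd boundary, and Lemma \ref{quotient} transports dynamical quasiconvexity along equivariant quotients. Hence it would be enough to establish that $H$ is dynamically quasiconvex for $G\curvearrowright\pGf$, and here I can use that $H\in\mathbb H$ is trivially relatively quasiconvex for $\mathbb H$—any geodesic of $\G$ joining two points of $H$ is a single edge—so that Theorem \ref{GFQC} makes $H$ dynamically quasiconvex for $G\curvearrowright\TGH$.

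The hard part is exactly the passage from $\TGH$ up to $\pGf$: Lemma \ref{quotient} only moves dynamical quasiconvexity downward to quotients, not upward along the map $\pGf\to\TGH$. To overcome this I would identify the limit set of $H$ in $\pGf$ with the fibre of $\pGf\to\TGH$ over the parabolic point of $H$, which by Gerasimov--Potyagailo \cite{GePo2} is the Floyd boundary of $H$ itself, and then feed in the comparison of the two peripheral structures $\mathbb H$ and $\mathbb P$ provided by Drutu--Sapir's tree-graded analysis \cite{DruSapir}: each $H\in\mathbb H$ is, up to conjugacy and finite intersections, compatible with $\mathbb P$, which is what forces the limit set of $H$ in $\TGP$ to carry a geometrically finite action. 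Verifying this compatibility and checking that it yields geometric finiteness through Theorem \ref{GFQC} is where I expect the main effort to lie.
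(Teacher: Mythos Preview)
The paper does not supply a proof of this lemma; it is stated as a citation, and the subsequent Remark attributes undistortion to Osin, Drutu--Sapir, and Gerasimov, and the second assertion to Hruska. So there is no in-paper argument to compare against, only the external sources.

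Your undistortion argument is correct and is essentially Osin's proof (compare \cite[Lemma~4.14]{Osin}, whose method the paper already reuses in Lemma~\ref{rfg}).

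For the second assertion there is a genuine gap, and you have located it yourself: Lemma~\ref{quotient} pushes dynamical quasiconvexity \emph{forward} along an equivariant quotient, so knowing that $H$ is dynamically quasiconvex for $G\curvearrowright\TGH$ tells you nothing about $G\curvearrowright\pGf$, and hence nothing about $G\curvearrowright\TGP$. Your proposed workaround does not close the gap. The claimed ``compatibility of $\mathbb H$ with $\mathbb P$ up to conjugacy and finite intersections'' is not a theorem of Drutu--Sapir in any form strong enough to use here; arbitrary relatively hyperbolic structures on the same group need not be comparable in that way (Theorem~\ref{mainthm} of the present paper manufactures many such pairs). And the final step---that this compatibility ``forces the limit set of $H$ in $\TGP$ to carry a geometrically finite action''---is, via Theorem~\ref{GFQC}, exactly the statement you are trying to prove.

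Hruska's actual argument bypasses the Floyd boundary entirely. The key input from \cite{DruSapir} is not a comparison of peripheral structures but the \emph{Morse property} of peripheral cosets: for each $H\in\mathbb H$, every $(\lambda,c)$-quasigeodesic of the word metric $d_X$ with endpoints on a coset $gH$ lies in a uniform $d_X$-neighbourhood of $gH$. This is a statement about $H$ as a subset of $\Gx$ and makes no reference to any peripheral structure. From there one verifies relative quasiconvexity with respect to an arbitrary $\mathbb P$ directly from the definition, using that relative geodesics in $\Gp$ track word-metric geodesics in $d_X$. Your dynamical route could in principle be completed, but only by importing the Morse property to control $\LF(H)$ or $\LP(H)$, at which point the detour through $\pGf$ is redundant.
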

\begin{rem}
The undistortedness of peripheral subgroups are proved by Osin
\cite{Osin}, Drutu-Sapir \cite{DruSapir} and Gerasimov \cite{Ge1},
using quite different methods. The last statement is proved by
Hruska \cite{Hru}.
\end{rem}

\subsection{Floyd boundary and relative hyperbolicity}
In this subsection, we first briefly recall the work of Gerasimov
\cite{Ge2} and Gerasimov-Potyagailo \cite{GePo1} on Floyd maps.
Based on their results, the Bowditch boundary with respect to a
parabolically extended structure is shown as an equivalent quotient,
and then the kernel of such an equivariant map is described.

From now on, unless explicitly stated, $G$ is always assumed to be
finitely generated by a fixed finite generating set $X$.

In \cite{Floyd}, Floyd introduced a compact boundary for a finitely
generated group $G$. Let $f$ be a suitable chosen function
satisfying Conditions (3)--(4) in \cite{GePo2}. We first rescale the
length of each edge $e$ of $\Gx$ by $f(n)$, where $n$ is the word
distance of the edge $e$ to $1 \in G$. Then we take length metric on
$\Gx$ and get the Cauchy completion $\Gf$ of $\Gx$. The complete
metric $\rho$ on $\Gf$ is called \textit{Floyd metric}. The
completion $\Gf$ is compact, and the remainder $\Gf \setminus G$ is
defined to be the \textit{Floyd boundary} $\pGf$ of $G$ with respect
to $f$.

If $\pGf$ consists of 0, 1 or 2 points then it is said to be
\textit{trivial}. Otherwise, it is uncountable and is called
\textit{nontrivial}. If $\pGf$ is nontrivial, then $G$ acts on
$\pGf$ as a convergence group action, by a result of Karlsson
\cite{Ka}.

The following Floyd map theorem due to Gerasimov \cite{Ge2} is key
to our study of peripheral structures.

\begin{thm}\label{floydmap}\cite{Ge2}
Suppose $G \curvearrowright M$ is 2-cocompact and $M$ contains at
least 3 points. Then there exists a continuous $G$-equivariant map
$\phi: \pGf \to M$, where $f(n) = \alpha^n$ for some $\alpha \in ]0,
1[$ sufficiently close to 1. Furthermore $\Lambda(G) = \phi (\pGf)$.
\end{thm}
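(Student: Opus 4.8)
The plan is to extend the identity map on $G$ to a boundary map by sending each Floyd boundary point to the attracting point of any sequence in $G$ that converges to it in the Floyd metric $\rho$. Concretely, fix a point $z\in M$; given $\xi\in\pGf$, choose $g_n\to\xi$ in $\Gf$, pass to a subsequence so that $g_n z\to a$ in $M$ (possible since $M$ is compact), and set $\phi(\xi)=a$. Because $g_n$ leaves every finite subset of $G$, the convergence dynamics guarantee that $a$ is a limit point of $G$, so the real issues are that $a$ be independent of all the choices made and that $\phi$ then be continuous, $G$-equivariant, and have image exactly $\Lambda(G)$. The two structural inputs I would rely on are Karlsson's contraction estimate for $\rho$ \cite{Ka} and the $2$-cocompactness of $G\curvearrowright M$.

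First I would record the geometry of $\rho$ for $f(n)=\alpha^n$. The identity $f(m)/f(n)=\alpha^{m-n}$ depends only on $m-n$, which yields the quasi-conformal bounds $\alpha^{\dx(1,g)}\rho(x,y)\le\rho(gx,gy)\le\alpha^{-\dx(1,g)}\rho(x,y)$ for all $g\in G$ and $x,y\in\Gf$; these will supply equivariance of $\phi$ once it is defined. The key dynamical input is Karlsson's lemma: for every $\epsilon>0$ there is $R>0$ such that any geodesic segment of $\Gx$ remaining at $\dx$-distance $\ge R$ from $1$ has $\rho$-length $<\epsilon$. Consequently, if $g_n\to\xi$ and $h_n\to\xi$ in $\Gf$ then $\rho(g_n,h_n)\to0$, so any $X$-geodesic $[g_n,h_n]$ must run off to infinity, i.e. $\dx(1,[g_n,h_n])\to\infty$.

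The heart of the argument, and the step I expect to be the main obstacle, is to convert this Floyd statement into a statement in $M$: that $g_n$ and $h_n$ share the same attracting point. Suppose not; then along subsequences $g_n z\to a$ and $h_n z\to b$ with $a\ne b$. Here I would use $2$-cocompactness in the form of a fixed compact $K\subset\Theta^2 M$ with $GK=\Theta^2 M$: pick $t\in G$ with $(ta,tb)\in K$, so that $t g_n z\to ta$ and $t h_n z\to tb$ are separated by a definite amount determined by $K$. The technical crux is a coarse dictionary, furnished by $2$-cocompactness, between the convergence dynamics on $M$ and the geometry of $\Gx$: since the separated pair ranges over the compact set $K$, there is a uniform constant $C$ so that every $X$-geodesic whose endpoints converge to such a pair meets the ball of radius $C$ about $1$. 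Applying this to $[t g_n,t h_n]=t[g_n,h_n]$ gives $\dx(t^{-1},[g_n,h_n])\le C$, whence $\dx(1,[g_n,h_n])\le\dx(1,t^{-1})+C$ stays bounded, contradicting the previous paragraph. Setting up this dictionary rigorously (Gerasimov's ``squeezing'' analysis \cite{Ge2}) is where all the genuine work lies, and it is precisely what forces $\alpha$ close to $1$, so that Karlsson's constants are sharp enough to match the separation scale coming from $K$.

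With well-definedness in hand the remaining steps are routine. Equivariance $\phi(g\xi)=g\phi(\xi)$ follows from the quasi-conformal bounds together with $G$-equivariance of the convergence dynamics on $M$. For continuity I would check that the estimates above are uniform: if $\xi_k\to\xi$ in $\pGf$, approximate each $\xi_k$ by group elements and rerun the same contraction-and-separation argument to force $\phi(\xi_k)\to\phi(\xi)$ in $M$. Finally, for the image, each $\phi(\xi)$ is by construction an accumulation point of a $G$-orbit, so $\phi(\pGf)\subset\Lambda(G)$; conversely, any $a\in\Lambda(G)$ is the limit of some orbit sequence $g_n z$, and passing to a $\rho$-convergent subsequence of $g_n$ (using compactness of $\Gf$) produces $\xi\in\pGf$ with $\phi(\xi)=a$, giving $\Lambda(G)=\phi(\pGf)$.
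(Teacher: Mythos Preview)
The paper does not prove this theorem at all: it is stated with the citation \cite{Ge2} and used as a black box (Gerasimov's Floyd map theorem). There is therefore nothing in the paper to compare your argument against.

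As for the sketch itself, the overall architecture --- extend $\mathrm{id}_G$ to $\pGf$ by sending $\xi$ to the attracting point of any approximating sequence, use Karlsson's visibility lemma to force geodesics $[g_n,h_n]$ to leave every ball when $g_n,h_n\to\xi$, and use $2$-cocompactness to derive a contradiction if the limits in $M$ differ --- is indeed the shape of Gerasimov's proof. But you have correctly located the gap and then not filled it: the assertion that ``there is a uniform constant $C$ so that every $X$-geodesic whose endpoints converge to such a pair meets the ball of radius $C$ about $1$'' is exactly the hard content of \cite{Ge2}. It is not an immediate consequence of $2$-cocompactness on $\Theta^2 M$; a priori there is no link whatsoever between word geodesics in $\Gx$ and the topology of $M$, and building that link is what Gerasimov's generalized Karlsson/Frink-type analysis accomplishes (and is precisely what pins down how close to $1$ the base $\alpha$ must be). So your proposal is a faithful roadmap of the published proof rather than an independent argument, with the substantive step deferred to the cited reference --- which is also what the paper under review does.
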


The map $\phi$ given by Theorem \ref{floydmap} is called
\textit{Floyd map}. According to the discussion in \cite{GePo2}, the
Floyd map $\phi$ defines a closed $G$-invariant equivalent relation
$\omega :=\{(x,y): \phi(x) = \phi(y), x,y \in \pGf \}$, which
induces a \textit{shortcut pseudometric} $\tilde \rho$ on $\pGf$.
This shortcut pseudometric is characterized as the maximal
pseudometric, among which vanishes on $\omega$ and is less then the
Floyd metric $\rho$. See \cite{GePo2} for more details.

Recall that $\TGH$ denotes the Bowditch boundary of $G
\curvearrowright M$, where $\mathbb H$ is a set of representatives
of the conjugacy classes of maximal parabolic subgroups of $G
\curvearrowright M$.

Moreover, the push-forward of $\tilde \rho$ by $\phi$ is shown to be
a metric on $\TGH$ in \cite{Ge2}, which is called \textit{shortcut
metric} (still denoted by $\tilde \rho$). Thus, $\phi$ is a distance
decreasing map from $(\pGf, \rho)$ to $(\TGH, \tilde\rho)$:
\begin{equation} \label{floydecrease}
\forall x, y \in \pGf : \rho(x, y) \ge \tilde\rho (\phi (x), \phi
(y)).
\end{equation}

\begin{conv}
Given a subgroup $J \subset G$, we denote by $\LF(J)$ and $\LH(J)$
limit sets with respect to $G \curvearrowright \pGf$ and $G
\curvearrowright \TGH$ respectively.
\end{conv}

We now recall the characterization of the ``kernel'' of Floyd maps
given in \cite{GePo2}. Note that a more complete characterization
appears in \cite{GePo3}, but here we do not need that deeper result.

\begin{thm}\label{floydker} \cite{GePo2}
Suppose $G \curvearrowright T$ is 2-cocompact. Let $\phi : \pGf \to
T$ be a $G$-equivariant map. Then
\[
\phi^{-1} (p) = \LF(G_p)
\]
for any parabolic point $p \in T$. Moreover, the multivalued inverse
map $\varphi^{-1}$ is injective on conical points of $G
\curvearrowright T$.
\end{thm}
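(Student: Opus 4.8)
The plan is to treat the two assertions separately, deducing each from the convergence dynamics on $\pGf$ and on $T$ together with the continuity and $G$-equivariance of $\phi$. Throughout I will use that, since $G \curvearrowright T$ is $2$-cocompact, Theorem \ref{GFRH} makes the action geometrically finite, so every parabolic point is \emph{bounded}; in particular $G_p$ acts cocompactly on $T \setminus \{p\}$. I will also use the basic fact that every point of $\pGf$ is an accumulation point of the orbit $G \cdot 1$, so $\pGf$ is the limit set of the convergence action $G \curvearrowright \pGf$.

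Fix a parabolic point $p$. The inclusion $\LF(G_p) \subseteq \phi^{-1}(p)$ is immediate from Lemma \ref{limitsetmap}: as $G_p$ is parabolic, $\Lambda_T(G_p) = \{p\}$, whence $\phi(\LF(G_p)) = \Lambda_T(G_p) = \{p\}$. For the reverse inclusion I would argue dynamically. Fix $x \in \phi^{-1}(p)$ and choose $g_n \in G$ with $g_n \cdot 1 \to x$; after passing to a subsequence, $g_n$ is a convergence sequence on $\pGf$ with attracting point $x$ and some repelling point $x^-$, so $g_n w \to x$ for every $w \neq x^-$. Applying $\phi$ and using equivariance, $g_n u \to \phi(x) = p$ for every $u \neq \phi(x^-)$, so $p$ is the attracting point of $g_n$ on $T$; in particular $g_n o \to p$ for a basepoint $o$. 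Because $p$ is a bounded parabolic point, such an orbit sequence converging to $p$ decomposes, after a further subsequence, as $g_n = h_n c_n$ with $h_n \in G_p$ and $c_n o$ confined to a fixed compact subset of $T \setminus \{p\}$, so that $c_n$ may be taken constant equal to some $c$. Then $h_n = g_n c^{-1}$ still attracts to $x$ on $\pGf$, exhibiting $x$ as a limit point of the $G_p$-orbit, i.e. $x \in \LF(G_p)$. The coset-decomposition step is the main obstacle: it is exactly where bounded parabolicity (coming from $2$-cocompactness via Theorem \ref{GFRH}) must be converted into control of the approaching elements $g_n$ by the stabilizer $G_p$.

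For the second assertion, let $z \in T$ be conical, with $g_n \in G$ and $a \neq b$ as in the definition, so $g_n(z) \to a$ and $g_n(q) \to b$ for every $q \neq z$. Comparison with the convergence normal form shows $g_n$ is a convergence sequence on $T$ with attracting point $b$ and repelling point $z$. On $\pGf$ the same sequence has, after a subsequence, an attracting point $z^+_f$ and a repelling point $z^-_f$; applying $\phi$ to the image of a point $w \neq z^-_f$ with $\phi(w) \neq z$ (such $w$ exists since $\phi$ is onto and $T$ has at least three points) forces $\phi(z^+_f) = b$. Now take any $x \in \phi^{-1}(z)$ and suppose $x \neq z^-_f$. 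Then $g_n x \to z^+_f$, so $\phi(g_n x) \to \phi(z^+_f) = b$; but $\phi(g_n x) = g_n \phi(x) = g_n(z) \to a$, forcing $a = b$, a contradiction. Hence $\phi^{-1}(z) = \{z^-_f\}$ is a single point, which is precisely the injectivity of $\varphi^{-1}$ on conical points. I expect this direction to be entirely formal once the convergence normal forms on $\pGf$ and on $T$ are matched up through $\phi$, with no further input beyond equivariance and continuity.
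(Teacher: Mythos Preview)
The paper does not prove Theorem~\ref{floydker}; it is quoted from Gerasimov--Potyagailo \cite{GePo2}, so there is no in-paper argument to compare against. I therefore assess your proposal on its own.

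Your treatment of conical points is correct and is the standard one: matching the convergence normal forms of $(g_n)$ on $\pGf$ and on $T$ through the continuous equivariant $\phi$ forces $\phi^{-1}(z)$ to be the single repelling point on $\pGf$.

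The parabolic case, however, has a genuine gap at exactly the step you flag. The assertion that ``$c_n$ may be taken constant equal to some $c$'' is not merely unjustified --- it is false in general. Bounded parabolicity gives a compact $K\subset T\setminus\{p\}$ with $G_pK=T\setminus\{p\}$, so one can indeed write $g_n=h_nc_n$ with $h_n\in G_p$ and $c_no\in K$; but the set $\{g\in G: go\in K\}$ is typically infinite (the orbit $Go$ is dense in the minimal limit set $T$), and the right cosets $G_pg_n$ need not stabilise. Concretely, take $G=\mathbb Z^2 * \langle t\rangle$ with $G_p=\mathbb Z^2$ and $a\in\mathbb Z^2$ nontrivial: the sequence $g_n=a^nta^n$ satisfies $g_no\to p$ for any conical $o$, yet $G_pg_n=\mathbb Z^2\,ta^n$ are pairwise distinct cosets, so no choice of $h_n\in G_p$ makes $c_n=h_n^{-1}g_n$ constant even after passing to a subsequence. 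Thus a purely dynamical scheme on $T$, using only equivariance and convergence, does not close. The proof in \cite{GePo2} brings in the geometry of the Floyd completion itself (the Karlsson visibility lemma and the shortcut pseudometric), and it is this extra metric input --- not available from the action on $T$ alone --- that substitutes for your missing coset-finiteness.
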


In the following two lemmas, we shall show the Bowditch boundary
with respect to an extended parabolically structure can be described
in a nice way.

\begin{lem} \label{equivarant}
Suppose $\GH$ is relatively hyperbolic. Let $\mathbb P$ be a
parabolically extended structure for $\GH$.  Then there exists a
$G$-equivariant surjective map $\varphi$ such that the following
diagram commutes
\begin{equation}
\xymatrix{
  \partial_f G \ar[dr]_{\phi_2} \ar[r]^{\phi_1}
                & \TGH \ar[d]^{\varphi}  \\
                & \TGP            }
\end{equation}
where $\phi_1$ and $\phi_2$ are Floyd maps given by Theorem
\ref{floydmap}. Furthermore, $\varphi$ is a distance decreasing map
with respect to the shortcut metrics $d_{\mathbb H}$ and $d_{\mathbb
P}$.
\end{lem}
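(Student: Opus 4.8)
The plan is to show that the Floyd map $\phi_2 \colon \pGf \to \TGP$ factors through $\phi_1 \colon \pGf \to \TGH$, and then to read off the stated properties of the induced map $\varphi$. Both $\phi_1$ and $\phi_2$ are produced by Theorem \ref{floydmap} applied to the convergence actions $G \curvearrowright \TGH$ and $G \curvearrowright \TGP$, which are $2$-cocompact by Theorem \ref{GFRH} since both $\GH$ and $\GP$ are relatively hyperbolic. As the conclusion of Theorem \ref{floydmap} holds for all $\alpha$ sufficiently close to $1$, we may pick a single such $\alpha$ serving both actions at once, so that $\phi_1$ and $\phi_2$ live on one and the same Floyd boundary $\pGf$. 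Writing $\omega_i = \{(x,y) : \phi_i(x)=\phi_i(y)\}$, the key point is the fibre refinement $\omega_1 \subseteq \omega_2$, i.e. that $\phi_2$ is constant on every fibre of $\phi_1$; granting this, $\varphi(\phi_1(x)) := \phi_2(x)$ is a well-defined map making the diagram commute.

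To establish $\omega_1 \subseteq \omega_2$, I would describe the fibres of $\phi_1$ via Theorem \ref{floydker}. Every point of $\TGH$ is either conical or bounded parabolic. Over a conical point the fibre of $\phi_1$ is a single point, so nothing is needed. Over a parabolic point $p \in \TGH$, the stabiliser $G_p$ is a maximal parabolic subgroup, hence a conjugate $H_i^g$ with $i \in I$, $g \in G$, and Theorem \ref{floydker} gives $\phi_1^{-1}(p) = \LF(H_i^g)$. Since $\mathbb P$ is an extended structure, $H_i \subset P_j$ for some $P_j \in \mathbb P$, so $H_i^g \subset P_j^g$, and Lemma \ref{limitsetmap} applied to $\phi_2$ yields (with $\LP(\cdot)$ denoting limit sets in $\TGP$)
\[
\phi_2\bigl(\phi_1^{-1}(p)\bigr) = \phi_2\bigl(\LF(H_i^g)\bigr) = \LP(H_i^g) \subseteq \LP(P_j^g).
\]
As $P_j^g$ is a maximal parabolic subgroup for $\mathbb P$, its limit set $\LP(P_j^g)$ is a single parabolic point, while $\LP(H_i^g)$ is nonempty because $H_i^g$ is infinite. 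Hence $\phi_2$ sends the entire fibre $\phi_1^{-1}(p)$ to one point, which is precisely $\omega_1 \subseteq \omega_2$.

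With $\varphi$ well-defined and satisfying $\varphi \circ \phi_1 = \phi_2$, the remaining properties follow formally. $G$-equivariance is inherited from that of $\phi_1$ and $\phi_2$; surjectivity follows from surjectivity of $\phi_2$ in Theorem \ref{floydmap}; and continuity holds because $\phi_1$, a continuous surjection between compact Hausdorff spaces, is a quotient map, so continuity of $\varphi \circ \phi_1 = \phi_2$ forces continuity of $\varphi$. For the distance-decreasing claim I would invoke the characterisation of the shortcut pseudometric $\tilde\rho_i$ as the maximal pseudometric that vanishes on $\omega_i$ and is dominated by the Floyd metric $\rho$, with $d_{\mathbb H}$ and $d_{\mathbb P}$ its push-forwards under $\phi_1$ and $\phi_2$. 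Since $\omega_1 \subseteq \omega_2$, every pseudometric vanishing on $\omega_2$ also vanishes on $\omega_1$, so the class defining $\tilde\rho_2$ is contained in the one defining $\tilde\rho_1$, giving $\tilde\rho_2 \le \tilde\rho_1$ pointwise. Pushing forward, for $a=\phi_1(x)$ and $b=\phi_1(y)$ we obtain $d_{\mathbb P}(\varphi(a),\varphi(b)) = \tilde\rho_2(x,y) \le \tilde\rho_1(x,y) = d_{\mathbb H}(a,b)$.

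The main obstacle is the fibre computation of the second paragraph: one must correctly identify the fibres of $\phi_1$ over parabolic points through Theorem \ref{floydker}, and then exploit the inclusion $H_i^g \subset P_j^g$ of maximal parabolics together with Lemma \ref{limitsetmap} to see that such a fibre is collapsed to a single point by $\phi_2$. Once $\omega_1 \subseteq \omega_2$ is in hand, both the construction of $\varphi$ and the distance-decreasing estimate are formal consequences of the shortcut pseudometric formalism of \cite{GePo2}.
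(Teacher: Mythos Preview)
Your proposal is correct and follows the same route as the paper: define $\varphi = \phi_2 \circ \phi_1^{-1}$, establish the fibre refinement $\omega_1 \subseteq \omega_2$, and deduce the distance-decreasing property from the maximality characterization of the shortcut pseudometrics. The paper's proof is in fact terser than yours---it simply asserts ``Observe that $\omega_1 \subset \omega_2$'' without the fibre analysis via Theorem~\ref{floydker} that you carry out---so your second paragraph supplies justification that the paper leaves implicit.
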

\begin{proof}
The following function $\varphi$ is well-defined:
\[
\forall x \in \TGH: \varphi(x) = \phi_2 \phi_1^{-1}(x).
\]
It is easy to verify that $\varphi$ is a $G$-equivariant continuous map.





We now show the last statement of this lemma. Let $\omega_1$ and
$\omega_2$ be $G$-invariant equivalence relations induced by Floyd
map $\phi_1$ and $\phi_2$ respectively. Observe that $\omega_1
\subset \omega_2$. Thus it follows easily that
\[
\forall x, y\in \TGH: d_{\mathbb H}(x,y) \geq d_{\mathbb
P}(\varphi(x),\varphi(y)).
\]
from the definition of shortcut pseudometrics on $\Gf$.
\end{proof}

The following lemma follows easily from Theorem \ref{floydker} and
describes the kernel of the map $\varphi$ defined in Lemma
\ref{equivarant}.

\begin{lem} \label{kernel}
Suppose $\GH$ is relatively hyperbolic and $\mathbb P$ is a
parabolically extended structure for $\GH$.  Let $\varphi : \TGH \to
\TGP$ be the $G$-equivariant surjective map provided by Lemma
\ref{equivarant}. Then
\[
\varphi^{-1} (p) = \LH(G_p)
\]
for any parabolic point $p \in \TGP$. Moreover, the multivalued
inverse map $\varphi^{-1}$ is injective on conical points of $G
\curvearrowright \TGP$.
\end{lem}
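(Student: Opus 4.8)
The plan is to exploit the commutative triangle $\phi_2 = \varphi \circ \phi_1$ furnished by Lemma \ref{equivarant} and to transport the kernel description of the Floyd map $\phi_2$ (Theorem \ref{floydker}) across $\phi_1$. First I would record that $\GP$ is relatively hyperbolic, since $\mathbb P$ is parabolically extended for $\GH$; hence by Theorem \ref{GFRH} the convergence action $G \curvearrowright \TGP$ is $2$-cocompact, and Theorem \ref{floydker} applies to the Floyd map $\phi_2 : \pGf \to \TGP$.

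For a parabolic point $p \in \TGP$, the composition identity gives $\phi_2^{-1}(p) = (\varphi \circ \phi_1)^{-1}(p) = \phi_1^{-1}(\varphi^{-1}(p))$. Theorem \ref{floydker} identifies the left-hand side as $\LF(G_p)$, where $G_p = \mathrm{Stab}_G(p)$ is the maximal parabolic subgroup fixing $p$. Since $\phi_1$ is surjective, applying $\phi_1$ to both sides and using the set-theoretic identity $\phi_1(\phi_1^{-1}(A)) = A$ for any $A \subset \TGH$, I obtain $\varphi^{-1}(p) = \phi_1(\LF(G_p))$. Lemma \ref{limitsetmap}, applied to the $G$-equivariant surjection $\phi_1$ with the subgroup $G_p$, then yields $\phi_1(\LF(G_p)) = \LH(G_p)$, which is precisely the asserted equality $\varphi^{-1}(p) = \LH(G_p)$.

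For the injectivity statement, let $c \in \TGP$ be a conical point. The same composition identity gives $\phi_1^{-1}(\varphi^{-1}(c)) = \phi_2^{-1}(c)$, and by the last clause of Theorem \ref{floydker} the fibre $\phi_2^{-1}(c)$ is a single point of $\pGf$. Applying the surjection $\phi_1$ once more, $\varphi^{-1}(c) = \phi_1(\phi_2^{-1}(c))$ is the image of a singleton under a map, hence a single point of $\TGH$. Thus $\varphi^{-1}$ is injective over conical points of $G \curvearrowright \TGP$.

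The argument is almost entirely formal, so I do not anticipate a genuine obstacle; the only points demanding care are (i) the identity $\phi_1(\phi_1^{-1}(A)) = A$, which requires the surjectivity of $\phi_1$ and is where the commutativity of the diagram is really used, and (ii) the correct reading of ``injective on conical points'' in Theorem \ref{floydker} as the assertion that conical fibres are singletons. Verifying the $2$-cocompactness hypothesis needed to invoke Theorem \ref{floydker} for $\phi_2$ is the one place where the parabolic-extension hypothesis genuinely enters.
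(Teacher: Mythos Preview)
Your proof is correct and follows essentially the same route as the paper: both use the identity $\varphi^{-1}(p) = \phi_1(\phi_2^{-1}(p))$, invoke Theorem \ref{floydker} to describe $\phi_2^{-1}(p)$, and then apply Lemma \ref{limitsetmap} in the parabolic case. The paper obtains the key identity directly from the definition $\varphi = \phi_2 \circ \phi_1^{-1}$ given in Lemma \ref{equivarant}, whereas you derive it from $\phi_2 = \varphi \circ \phi_1$ together with surjectivity of $\phi_1$; these are equivalent and the remaining steps coincide.
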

\begin{proof}
Observe that $\varphi^{-1}(p) = \phi_1 \phi_2^{-1}(p)$ for any $p
\in \TGP$. Suppose $\varphi(a) = \varphi (b) = p$ for $a, b \in
\TGH$, i.e. $a, b \in \varphi^{-1}(p)$. If $p$ is conical with
respect to $G \curvearrowright \TGP$, then $\phi_2^{-1}(p)$ consists
of one single point. Thus $a=b$.

If $p$ is bounded parabolic with respect to $G \curvearrowright
\TGP$, then $\phi_2^{-1} (p) = \LF(G_p)$ using Theorem \ref{floydker}.
By Lemma \ref{limitsetmap}, we obtain $\varphi^{-1} (p) =
\phi_1(\LF(G_p)) = \LH(G_p)$. The proof is complete.
\end{proof}

\subsection{Proof of Theorem \ref{mainthm2}}

The proof of Theorem \ref{mainthm2} is divided into the following
two propositions. Taking into account Theorem \ref{GFQC}, the first
proposition follows immediately from Lemmas \ref{limitsetmap} and
\ref{equivarant}.

\begin{prop} \label{dynconv1}
Suppose $\GH$ is relatively hyperbolic and $\mathbb P$ is a
parabolically extended structure for $\GH$. If $\Gamma \subset G$ is
relatively quasiconvex in $G$ with respect to $\mathbb H$, then
$\Gamma$ is relatively quasiconvex in $G$ with respect to $\mathbb
P$.
\end{prop}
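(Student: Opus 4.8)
The plan is to pass to the dynamical picture, where the assertion becomes the stability of dynamical quasiconvexity under an equivariant quotient. Throughout, $G$ is finitely generated, and by Theorem \ref{GFRH} both $G \curvearrowright \TGH$ and $G \curvearrowright \TGP$ are geometrically finite convergence actions (the latter because $\mathbb P$ is parabolically extended, so $\GP$ is relatively hyperbolic). This is exactly the hypothesis needed to apply Theorem \ref{GFQC} on each boundary.

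First I would translate the hypothesis into dynamical terms: applying the equivalence $(1)\Leftrightarrow(3)$ of Theorem \ref{GFQC} to $G \curvearrowright \TGH$, the relative quasiconvexity of $\Gamma$ with respect to $\mathbb H$ is equivalent to $\Gamma$ being dynamically quasiconvex for $G \curvearrowright \TGH$. Next, since $\mathbb P$ is parabolically extended for $\GH$, Lemma \ref{equivarant} supplies a $G$-equivariant continuous surjection $\varphi : \TGH \to \TGP$. This places us precisely in the setting of Lemma \ref{quotient} with $M = \TGH$ and $N = \TGP$, whose conclusion is that dynamical quasiconvexity is inherited by the quotient action. Hence $\Gamma$ is dynamically quasiconvex for $G \curvearrowright \TGP$.

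Finally I would translate back: applying the implication $(3)\Rightarrow(1)$ of Theorem \ref{GFQC}, now to $G \curvearrowright \TGP$, the dynamical quasiconvexity of $\Gamma$ yields that $\Gamma$ is relatively quasiconvex with respect to $\mathbb P$, which is the desired conclusion.

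There is no serious obstacle once Lemma \ref{quotient} is available; the real content is hidden there and, one step deeper, in Lemma \ref{limitsetmap}. The point to verify carefully is that $\varphi$ carries limit sets onto limit sets, namely $\varphi(g\LH(\Gamma)) = g\LP(\Gamma)$ for all $g \in G$, and that preimages under $\varphi$ of disjoint closed subsets of $\TGP$ remain disjoint in $\TGH$. Granting this, any infinite family of cosets $g\Gamma$ witnessing failure of dynamical quasiconvexity downstairs (separating two disjoint closed sets $K, L \subset \TGP$) lifts to an infinite family separating $\varphi^{-1}(K)$ and $\varphi^{-1}(L)$ upstairs, contradicting dynamical quasiconvexity for $G \curvearrowright \TGH$. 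This is exactly the crux packaged by Lemma \ref{quotient}, so in the write-up it can simply be cited, making the proposition follow immediately.
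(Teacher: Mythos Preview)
Your proof is correct and follows essentially the same route as the paper: the paper states that, taking into account Theorem \ref{GFQC}, the proposition follows immediately from Lemmas \ref{limitsetmap} and \ref{equivarant}, which is exactly your argument (with Lemma \ref{quotient} packaging the use of Lemma \ref{limitsetmap}). Your explicit unpacking of how Lemma \ref{quotient} is applied is accurate and matches the intended logic.
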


By Theorem \ref{GFQC}, the second statement of Theorem
\ref{mainthm2} is restated in the following dynamical terms.

\begin{prop} \label{dynconv2}
Suppose $\GH$ is relatively hyperbolic and $\mathbb P$ is a
parabolically extended structure for $\GH$. Let $\Gamma \subset G$
acts geometrically finitely on $\LP(\Gamma)$. Then $\Gamma$ acts
geometrically finitely on $\LH(\Gamma)$ if and only if $\Gamma \cap
P_j^g$ acts geometrically finitely on $\LH(\Gamma \cap P_j^g)$ for
any $j \in J$ and $g \in G$.
\end{prop}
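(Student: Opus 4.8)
The plan is to carry out the whole argument on the Bowditch boundaries and to transport information across the $G$-equivariant resolution $\varphi\colon\TGH\to\TGP$ produced by Lemma \ref{equivarant}, whose fibres are described by Lemma \ref{kernel}: over a conical point of $G\curvearrowright\TGP$ the map $\varphi$ is injective, while over a parabolic point $q$ with stabiliser $G_q=P_j^g$ one has $\varphi^{-1}(q)=\LH(G_q)=\LH(P_j^g)$. Since $\GH$ and $\GP$ are both relatively hyperbolic, $G\curvearrowright\TGH$ and $G\curvearrowright\TGP$ are geometrically finite, so Theorem \ref{GFQC} lets me translate ``$J$ acts geometrically finitely on its limit set'' into ``$J$ is dynamically quasiconvex'' for any subgroup $J$. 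Restricting $\varphi$ gives a $\Gamma$-equivariant continuous surjection $\LH(\Gamma)\to\LP(\Gamma)$ (Lemma \ref{limitsetmap}), and the proof becomes a fibrewise comparison of the $\Gamma$-dynamics upstairs and downstairs.

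For the forward implication I would argue dynamically. If $\Gamma\curvearrowright\LH(\Gamma)$ is geometrically finite then $\Gamma$ is dynamically quasiconvex in $G\curvearrowright\TGH$; moreover each $P_j$ is parabolically embedded, hence relatively quasiconvex with respect to $\mathbb H$ by Proposition \ref{PIISQC}, so each conjugate $P_j^g$ is dynamically quasiconvex in $G\curvearrowright\TGH$ as well by Theorem \ref{GFQC}. It then suffices to show that an intersection of two dynamically quasiconvex subgroups is dynamically quasiconvex, which is an elementary coset count: from $\LH(\Gamma\cap P_j^g)\subseteq\LH(\Gamma)\cap\LH(P_j^g)$, any coset $g'(\Gamma\cap P_j^g)$ whose limit set meets two disjoint closed sets forces $g'\Gamma$ into the finite list given by dynamical quasiconvexity of $\Gamma$ and $g'P_j^g$ into the finite list for $P_j^g$; since $g_1\Gamma\cap g_2P_j^g$ is either empty or a single coset of $\Gamma\cap P_j^g$, only finitely many cosets occur. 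Theorem \ref{GFQC} then returns geometric finiteness of $\Gamma\cap P_j^g$.

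The reverse implication is the substantial one, and I would prove it by classifying each $x\in\LH(\Gamma)$ as conical or bounded parabolic for $\Gamma$ according to $q:=\varphi(x)\in\LP(\Gamma)$, which by the standing hypothesis is already conical or bounded parabolic for $\Gamma$. If $q$ is conical, then the witnessing sequence $g_i\in\Gamma\subseteq G$, read through the convergence action $G\curvearrowright\TGP$, has source--sink dynamics whose repelling point must be $q$; by Lemma \ref{kernel} the fibre is the single point $x$. Reading the same sequence through $G\curvearrowright\TGH$ and matching the attracting and repelling fixed points across $\varphi$, one finds that $x$ is forced to be the repelling point, every $y\neq x$ satisfies $g_iy\to\xi^+$ for one attracting point $\xi^+$, and $g_ix$ limits over a point distinct from $\varphi(\xi^+)$; this is exactly the paper's definition of $x$ being conical for $\Gamma\curvearrowright\LH(\Gamma)$. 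If instead $q$ is bounded parabolic with $G_q=P_j^g$, then its $\Gamma$-stabiliser is $\Gamma_q=\Gamma\cap P_j^g$, the fibre is $\LH(P_j^g)$, and I would identify its relevant part $\LH(\Gamma)\cap\LH(P_j^g)$ with $\LH(\Gamma_q)$ using that $\Gamma$-orbits accumulating on $q$ are controlled, up to the compact fundamental domain of the bounded parabolic $q$, by $\Gamma_q$. The hypothesis that $\Gamma_q$ acts geometrically finitely on $\LH(\Gamma_q)$ then classifies $x$ inside the fibre, and any $\gamma\in\Gamma$ fixing $x$ fixes $q$ and hence lies in $\Gamma_q$, so the $\Gamma$- and $\Gamma_q$-stabilisers of $x$ coincide.

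I expect the bounded-parabolic case to be the main obstacle. Conical points pass upward for free by the convergence-dynamics argument above, but a point $x$ lying over a bounded parabolic $q$ need not have an obvious type for the larger group: its stabiliser must act cocompactly on all of $\LH(\Gamma)\setminus\{x\}$, which is strictly larger than the fibre piece $\LH(\Gamma_q)\setminus\{x\}$ on which cocompactness is supplied by the hypothesis, so cocompactness does not transfer directly. I would resolve this by gluing the two cocompactness data along $\varphi$, namely the compact fundamental domain for $q$ under $\Gamma_q$ on $\LP(\Gamma)\setminus\{q\}$ and the one for $x$ under $\Gamma_x$ on $\LH(\Gamma_q)\setminus\{x\}$, and thereby deciding in each case whether $x$ is conical or bounded parabolic for $\Gamma$. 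Making this gluing precise, together with the fibre identification $\LH(\Gamma)\cap\LH(P_j^g)=\LH(\Gamma_q)$ and the finiteness of the number of $\Gamma$-orbits of bounded parabolic points, is the technical heart of the proof; everything else reduces, via Theorem \ref{GFQC}, to the dynamical bookkeeping already set up for Proposition \ref{dynconv1}.
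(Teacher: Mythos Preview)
Your forward implication is essentially the paper's: both pass through relative quasiconvexity of $P_j^g$ (Proposition \ref{PIISQC}) and the closure of quasiconvexity under intersection; the paper simply cites the latter as a known fact rather than reproving it via the coset count.

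For the reverse implication you take a genuinely different route. You attempt to verify the conical/bounded--parabolic dichotomy pointwise on $\LH(\Gamma)$, whereas the paper bypasses this entirely by using the equivalent \emph{2-cocompactness} formulation (Theorem \ref{GFRH}(3)) together with the fact, recorded in Lemma \ref{equivarant}, that $\varphi\colon(\TGH,d_{\mathbb H})\to(\TGP,d_{\mathbb P})$ is distance--decreasing for the shortcut metrics. Concretely: from 2-cocompactness of $\Gamma\curvearrowright\LP(\Gamma)$ one gets $\epsilon_0>0$, and from 2-cocompactness of each $Q_i\curvearrowright\LH(Q_i)$ (finitely many $\Gamma$-conjugacy classes, by relative quasiconvexity of $\Gamma$ w.r.t.\ $\mathbb P$) one gets $\epsilon_i>0$; setting $\epsilon=\min_i\epsilon_i$, the compact set $L=\{(x,y):d_{\mathbb H}(x,y)\ge\epsilon\}\subset\Theta^2(\LH(\Gamma))$ is a fundamental domain. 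Any pair $(p,q)$ with $\varphi(p)\ne\varphi(q)$ is pushed into $L$ by the downstairs action and the distance--decreasing property; any pair with $\varphi(p)=\varphi(q)$ lies in a single fibre $\LH(Q_i^\gamma)$ by Lemma \ref{kernel} and is pushed into $L$ by the hypothesis on $Q_i$.

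The obstacle you correctly flag in your approach is real and is exactly what the 2-cocompactness trick dissolves. In your bounded--parabolic case, when $x$ is bounded parabolic for $\Gamma_q$ you need $\Gamma_x=(\Gamma_q)_x$ to act cocompactly on all of $\LH(\Gamma)\setminus\{x\}$, not just on $\LH(\Gamma_q)\setminus\{x\}$; but $\Gamma_x$ can be a proper (even infinite--index) subgroup of $\Gamma_q$, so the cocompactness of $\Gamma_q$ on $\LP(\Gamma)\setminus\{q\}$ does not descend to $\Gamma_x$, and your proposed ``gluing'' has no obvious mechanism. Likewise the fibre identification $\LH(\Gamma)\cap\LH(P_j^g)=\LH(\Gamma_q)$ is not justified in your sketch; the paper never needs it as an equality of sets, only that a pair in the same fibre can be translated into some $\LH(Q_i)$ by an element of $\Gamma$. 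The moral is that working with pairs of points (2-cocompactness) and a compatible metric lets one combine the downstairs and fibrewise data linearly, whereas the pointwise dichotomy forces you to reconcile two different stabilisers acting on two different complements.
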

\begin{proof}
$\Rightarrow$: By Lemma \ref{PIISQC}, each $P_j$ is relatively
quasiconvex with respect to $\mathbb H$. It is a well-known fact
that the intersection of two relatively quasiconvex subgroups is
relatively quasiconvex, see for example, \cite{Hru} and
\cite{MarPed2}. Hence we have $\Gamma \cap P_j^g$ is relatively
quasiconvex with respect to $\mathbb H$, and then acts geometrically
finitely on its limit set $\LH(\Gamma \cap P_j^g)$.

$\Leftarrow$: By Lemma \ref{equivarant}, the map $\varphi: \TGH \to
\TGP$ is a distance decreasing function with respect to the induced
shortcut metrics $d_{\mathbb H}$ and $d_{\mathbb P}$.

Since $\Gamma$ is relatively quasiconvex with respect to $\mathbb
P$, then the following set
\[
\{\Gamma \cap P_j^g: \sharp \; \Gamma \cap P_j^g = \infty , g\in G,
j \in J\}
\]
contains finitely many $\Gamma$-conjugacy classes, say $\{Q_1,
\ldots,Q_n\}$. By Theorem \ref{GFRH}, each $Q_i$ acts
2-cocompactly on $\LH(Q_i)$.  We shall show that $\Gamma$ also acts
2-cocompactly on $\LH(\Gamma)$.

Since $\Gamma$ acts 2-cocompactly on $\LP(\Gamma)$, these exists
$\epsilon_0 > 0$ such that for any $(x,y)\in \Theta^2(\LP(\Gamma))$,
there exists $\gamma \in \Gamma$ satisfying $d_{\mathbb P}(\gamma
x,\gamma y)>\epsilon_0$.  Similarly, we have  a positive constant
$\epsilon_i > 0$ for each $i \in I$ such that for any $(x,y)\in
\Theta^2(\LH(Q_i))$, there exists $\gamma \in Q_i$ satisfying
$d_{\mathbb H}(\gamma x,\gamma y)>\epsilon_i$.

Let $\epsilon := \min\{\epsilon_0,\min\{\epsilon_i: i \in I\}\}$. We
now define a compact $L \subset \Theta^2(\LH(\Gamma))$ as follows
\[
L=\{(x, y)\in \Theta^2(\LH(\Gamma)): d_{\mathbb H}(x,y) \geq
\epsilon \}.
\]
Then we claim $L$ is a fundamental domain of $\Gamma$ on
$\Theta^2(\LH(\Gamma))$.

Given distinct points $p,q \in \LH(\Gamma)$, we have the following
two cases to consider:

\textit{Case 1.} $\phi(p) \neq \phi(q)$. Then there exists $\gamma_0 \in
\Gamma$ such that
\[
d_{\mathbb P}(\gamma_0 (\varphi(p)),\gamma_0(\varphi(q))) =
d_{\mathbb P}(\varphi(\gamma_0 p),\varphi(\gamma_0 q)) >\epsilon_0 >
\epsilon.
\]

Since $\varphi$ is a distance
decreasing map, we have $d_{\mathbb H}(\gamma_0 p,\gamma_0 q) \geq
d_{\mathbb P}(\phi(\gamma_0 p),\phi(\gamma_0 q))$. This implies
$\gamma_0(p,q)\in L$.

\textit{Case 2.} $\phi(p) = \phi(q)$. By Lemma \ref{kernel}, we have
the points $p,q$ lie in the limit set $\LH(Q_i^\gamma)$ for some $1
\leq i \leq n, \gamma \in \Gamma$, i.e.
$(\gamma^{-1}(p),\gamma^{-1}(q)) \in \LH(Q_i)$. Then there exists an
element $\gamma_i$ from $Q_i$ such that $d_{\mathbb H}(\gamma_i
\gamma^{-1}(p), \gamma_i \gamma^{-1}(q)) > \epsilon_i > \epsilon$.
This implies that $\gamma_i \gamma^{-1}(p,q)\in L$.

Combining the above two cases, we showed that $\Gamma$ acts
2-cocompactly ad thus geometrically finitely on $\LH(\Gamma)$.
\end{proof}

\begin{rem}
Using an argument of \cite{MarPed}
with Proposition \ref{liftqc}, one is able to obtain the full
generality of Theorem \ref{mainthm2} for countable relatively
hyperbolic groups. We leave the details to the interested reader.
\end{rem}

The proof of Proposition \ref{dynconv2} also produces the following
result.
\begin{thm} \label{GFFloyd}(Theorem \ref{mainthm4})
Suppose $\GH$ is relatively hyperbolic. Then $G$ acts geometrically
finitely on $\pGf$ if and only if each $H \in \mathbb H$ acts
geometrically finitely on $\LF(H)$.
\end{thm}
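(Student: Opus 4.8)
The plan is to reduce both implications to the $2$-cocompactness criterion of Theorem \ref{GFRH}, exploiting the Floyd map $\phi : \pGf \to \TGH$. This map exists by Theorem \ref{floydmap}, since $\GH$ relatively hyperbolic makes $G \curvearrowright \TGH$ a $2$-cocompact convergence action. Throughout I assume $\pGf$ is nontrivial, so that $G \curvearrowright \pGf$ is itself a convergence action by Karlsson \cite{Ka}; the trivial case renders geometric finiteness vacuous. Recall from (\ref{floydecrease}) that $\phi$ is distance decreasing from the Floyd metric $\rho$ to the shortcut metric $\tilde\rho$, and from Theorem \ref{floydker} that $\phi^{-1}(p) = \LF(G_p)$ for every parabolic point $p$ of $G \curvearrowright \TGH$, while $\phi$ is injective over conical points. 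Since the maximal parabolic subgroups of $G \curvearrowright \TGH$ are exactly the conjugates of the finitely many $H \in \mathbb H$, the fibres of $\phi$ with more than one point are precisely the sets $\LF(H^g) = g\,\LF(H)$.

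For the direction ($\Leftarrow$) I would mimic the fundamental-domain construction in the proof of Proposition \ref{dynconv2}. Assuming each $H \curvearrowright \LF(H)$ is geometrically finite, hence $2$-cocompact, I obtain $\epsilon_H > 0$ so that every pair in $\Theta^2(\LF(H))$ can be $H$-translated to $\rho$-distance $> \epsilon_H$; the $2$-cocompactness of $G \curvearrowright \TGH$ gives $\epsilon_0 > 0$ playing the same role for $\Theta^2(\TGH)$ under $\tilde\rho$. Set $\epsilon = \min\{\epsilon_0,\ \min_{H \in \mathbb H}\epsilon_H\}$ and
\[
L = \{(x,y) \in \Theta^2(\pGf): \rho(x,y) \ge \epsilon\},
\]
a compact subset. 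For distinct $p, q \in \pGf$ there are two cases. If $\phi(p) \ne \phi(q)$, choose $g$ with $\tilde\rho(g\phi(p), g\phi(q)) > \epsilon_0$; then (\ref{floydecrease}) gives $\rho(gp, gq) \ge \tilde\rho(\phi(gp),\phi(gq)) > \epsilon$, so $g(p,q) \in L$. If $\phi(p) = \phi(q)$, then by Theorem \ref{floydker} this common image is a parabolic point with stabilizer some conjugate $H^g$, whence $g^{-1}p, g^{-1}q \in \LF(H)$ and a suitable element of $H$ moves the pair to $\rho$-distance $> \epsilon_H$. Thus $L$ is a compact fundamental domain, $G \curvearrowright \pGf$ is $2$-cocompact, and geometric finiteness follows from Theorem \ref{GFRH}.

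For the direction ($\Rightarrow$) I would identify the peripheral structure of the Floyd action. Assuming $G \curvearrowright \pGf$ geometrically finite, let $\mathbb P'$ be a set of representatives of its maximal parabolic subgroups, so that $(G,\mathbb P')$ is relatively hyperbolic with $\pGf$ as Bowditch boundary. If $P \in \mathbb P'$ fixes the bounded parabolic point $z \in \pGf$, then $\LF(P) = \{z\}$, and Lemma \ref{limitsetmap} gives $\LH(P) = \phi(\LF(P)) = \{\phi(z)\}$; hence $\phi(z)$ is a parabolic point of $\TGH$ and $P$ lies in its stabilizer, a conjugate of some $H \in \mathbb H$. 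Choosing representatives compatibly, $\mathbb H$ is an extended, hence parabolically extended, structure for $(G,\mathbb P')$. By Proposition \ref{PIISQC} each $H$ is then relatively quasiconvex with respect to $\mathbb P'$, so Theorem \ref{GFQC} yields that $H \curvearrowright \LF(H)$ is geometrically finite, as required.

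The main obstacle I anticipate lies in the ($\Rightarrow$) direction: one must verify that the maximal parabolic subgroups of the a priori unfamiliar geometrically finite action $G \curvearrowright \pGf$ genuinely sit inside the given peripheral subgroups, so that $\mathbb H$ parabolically extends the Floyd structure and Proposition \ref{PIISQC} applies. This hinges on combining the fibre description of Theorem \ref{floydker} with the limit-set transport of Lemma \ref{limitsetmap} to rule out a Floyd-parabolic point mapping to a conical point of $\TGH$; one must also dispose of the degenerate cases where $\pGf$ is trivial, for which the convergence dynamics break down.
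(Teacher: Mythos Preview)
Your $(\Leftarrow)$ direction is essentially identical to the paper's: both run the fundamental-domain argument of Proposition \ref{dynconv2} with the Floyd map $\phi:\pGf\to\TGH$ in place of $\varphi$, using that $\phi$ is distance-decreasing and Theorem \ref{floydker} to handle the fibres.

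Your $(\Rightarrow)$ direction is correct but considerably more elaborate than the paper's. The paper simply invokes Lemma \ref{undistorted}: since each $H\in\mathbb H$ is undistorted, it is relatively quasiconvex in \emph{any} relatively hyperbolic structure on $G$, in particular in the one coming from the geometrically finite action $G\curvearrowright\pGf$; Theorem \ref{GFQC} then gives geometric finiteness of $H\curvearrowright\LF(H)$ immediately. Your route instead reconstructs the content of Corollary \ref{structure} (showing that the Floyd-maximal parabolics $\mathbb P'$ sit inside conjugates of the $H$'s, so that $\mathbb H$ is parabolically extended for $(G,\mathbb P')$) and then applies Proposition \ref{PIISQC} followed by Theorem \ref{GFQC}. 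This works, and the obstacle you flag---ruling out that a Floyd-parabolic point maps to a conical point of $\TGH$---is genuine but easily handled: a maximal parabolic $P$ for the Floyd action is infinite with $\LH(P)=\phi(\LF(P))$ a single point by Lemma \ref{limitsetmap}, so $P$ contains no loxodromics for $G\curvearrowright\TGH$ and its fixed point must be parabolic there. The paper's shortcut via Lemma \ref{undistorted} bypasses all of this structural analysis; your argument, on the other hand, makes the relationship between the two peripheral structures explicit, which is what Corollary \ref{structure} records separately.
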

\begin{proof}
$\Rightarrow$: Note that each $H \in \mathbb H$ is undistorted in
$G$ by Lemma \ref{undistorted}. Since $G$ acts geometrically
finitely on $\pGf$, then by Theorem \ref{GFQC} each $H \in \mathbb
H$ acts geometrically finitely on $\LF(H)$.

$\Leftarrow$: In particular, we use the Floyd map $\phi: \pGf \to
\TGH$ instead of the map $\varphi$ in the proof of Proposition
\ref{dynconv2}. Note that $F$ is also a distance decreasing map with
respect to $\rho$ and $d_{\mathbb H}$. The other arguments are
exactly the same as Proposition \ref{dynconv2}.
\end{proof}

\subsection{Some applications}
In this subsection, we give some preliminary results on general
peripheral structures. The first result roughly states that if a
finitely generated group acts geometrically finitely on its Floyd
boundary, then every peripheral structure to which it may be
hyperbolic relative are parabolically extended for a canonical
peripheral structure. This is a direct corollary to Theorem
\ref{floydmap}.

\begin{cor} \label{structure} Suppose $G$ acts
geometrically finitely on $\pGf$ and $\GP$ is relatively hyperbolic.
Then $\mathbb P$ is parabolically extended for $\GH$, where $\mathbb
H$ comprises a suitable choice of representatives of the conjugacy
classes of maximal parabolic subgroups with respect to $G
\curvearrowright \pGf$, and possibly a trivial subgroup.
\end{cor}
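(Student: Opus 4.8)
The plan is to read both relative hyperbolic structures dynamically and then let the Floyd map of Theorem \ref{floydmap} carry the maximal parabolic subgroups of one action into those of the other. First I would fix the dynamical picture. Since $G$ acts geometrically finitely on $\pGf$, Theorem \ref{GFRH} shows that $\GH$ is relatively hyperbolic, where $\mathbb H$ is the stated collection of representatives of conjugacy classes of maximal parabolic subgroups of $G \curvearrowright \pGf$ (one adjoins a trivial subgroup only to keep $\mathbb H$ nonempty, for instance when $\pGf$ carries no parabolic points and $G$ is word hyperbolic). Dually, the hypothesis that $\GP$ is relatively hyperbolic gives, again by Theorem \ref{GFRH}, a $2$-cocompact convergence action $G \curvearrowright \TGP$ whose maximal parabolic subgroups are represented by $\mathbb P$.

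Assuming $\TGP$ has at least three points, Theorem \ref{floydmap} supplies a continuous $G$-equivariant map $\phi : \pGf \to \TGP$ with $\phi(\pGf) = \TGP$, so $\phi$ is surjective and Lemma \ref{limitsetmap} applies to it. The core step is then to show that each $H \in \mathbb H$ lies inside a conjugate of some $P \in \mathbb P$. Fix such an $H$; it is infinite and parabolic for $G \curvearrowright \pGf$, so $\LF(H) = \{a\}$ for its unique fixed point $a \in \pGf$. Lemma \ref{limitsetmap} yields $\LP(H) = \phi(\LF(H)) = \{\phi(a)\}$, whence $H$ is an infinite subgroup of $G$ with one-point limit set in $\TGP$. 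Thus $\phi(a)$ is a parabolic point for $G \curvearrowright \TGP$; since that action is geometrically finite, its stabilizer $G_{\phi(a)}$ is a maximal parabolic subgroup containing $H$, and as $\mathbb P$ represents the conjugacy classes of such stabilizers we obtain $G_{\phi(a)} = P^{g}$ for some $P \in \mathbb P$ and $g \in G$.

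It remains to upgrade $H \subset P^{g}$ to an honest inclusion into a member of $\mathbb P$. Because $\mathbb H$ is only required to be a set of conjugacy representatives, I would replace the representative $H$ by its conjugate $H^{g^{-1}} = g^{-1} H g \subset P$; this is precisely the ``suitable choice of representatives'' permitted in the statement. Doing so for every $H \in \mathbb H$ (the adjoined trivial subgroup, if present, being contained in any $P$) exhibits $\mathbb P$ as an extended peripheral structure for $\GH$, and since both $\GH$ and $\GP$ are relatively hyperbolic this is by definition the assertion that $\mathbb P$ is parabolically extended for $\GH$. I expect the genuinely delicate points to be bookkeeping rather than substance: organizing the simultaneous conjugation of all representatives consistently, and disposing of the degenerate cases where $\pGf$ or $\TGP$ has fewer than three points (there $G$ is elementary and the conclusion is immediate). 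The potential worry that $\phi(a)$ might be conical rather than parabolic is dissolved at once by Lemma \ref{limitsetmap}, which forces $\LP(H)$ to be a single point and so rules out conicality for an infinite stabilizer.
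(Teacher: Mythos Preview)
Your proof is correct and follows the same overall strategy as the paper: use the Floyd map $\phi:\pGf\to\TGP$ to show that every maximal parabolic subgroup for $G\curvearrowright\pGf$ sits inside a conjugate of some $P\in\mathbb P$, then adjust representatives.

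There is one genuine, if small, difference worth recording. For the key step (your ``core step'', the paper's Claim~1), the paper argues only that $\phi(\LF(H))$ is an $H$-fixed point, deduces that $\LP(H)$ has one or two points, and then rules out the two-point case by an ad hoc argument using proper discontinuity on $\pGf\setminus\{\LF(H)\}$. Your route is cleaner: invoking Lemma~\ref{limitsetmap} gives $\LP(H)=\phi(\LF(H))$ directly, so $\LP(H)$ is a singleton and the two-point discussion never arises. Both arguments land at the same place, but yours makes better use of the machinery already set up in the paper. A second cosmetic difference is in assembling $\mathbb H$: the paper builds it bottom-up as $\bigcup_j \mathbb H_j$ with $\mathbb H_j$ chosen inside each $P_j$, whereas you start from arbitrary conjugacy representatives and conjugate each one into its target $P$. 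Either bookkeeping is fine for the definition of an extended structure, and the paper's interpretation of the ``possibly trivial subgroup'' (needed when some $P\in\mathbb P$ is itself word hyperbolic) is slightly broader than the case you singled out, but this does not affect correctness.
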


\begin{proof}
Let $\phi: \pGf \to \TGP$ be the Floyd map given by Theorem
\ref{floydmap}. Let $\tilde{\mathbb H}$ be the collection of maximal
parabolic subgroups with respect to $G \curvearrowright \pGf$.

\begin{claim1}
For each $H \in \tilde{\mathbb H}$, there exists $g \in G$ and $j
\in J$ such that $H \subset P_j^g$.
\end{claim1}
\begin{proof}[Proof of Claim 1]
As $\LF(H)$ is a parabolic point, then $\phi(\LF(H))$ is also fixed
by $H$.  Hence $\LP(H)$ consists of one point or two points. If
$\LP(H)$ is one point, then $H$ contains no hyperbolic elements. By
\cite[Theorem 3A]{Tukia}, the stabilizer of $\LP(H)$ is a maximal
parabolic subgroup for the action $G \curvearrowright \TGP$. So the
claim is proved in this case.

We now show that $\LP(H)$ could not consist of two points. Suppose
not. Let $q$ be the other point in $\LP(H)$. Then the preimage
$\phi^{-1}(q)$ is $H$-invariant. Take a point $z \in \phi^{-1}(q)$.
As $H$ acts properly discontinuously on $\pGf \setminus \{\LF(H)\}$,
then the orbit $H(z)$ should converge to $\LF(H)$. However, we have
$\phi(H(z))$ and $\phi(\LP(H))$ are distinct points.  This
contradicts to the continuity of $\phi$.
\end{proof}

Let $\mathbb H_j$ be a set of representatives of the conjugacy
classes of maximal parabolic subgroups with respect to $P_j
\curvearrowright \LF(P_j)$.

\begin{claim2}
The union $\mathbb H := \cup_{j \in J} \mathbb H_j$ is a set of
representatives of $\tilde{\mathbb H}$.
\end{claim2}

\begin{proof}[Proof of Claim 2]
By Claim 1,  we have that $\mathbb H$ contains at least a set of
representatives of the conjugacy classes of $\tilde{\mathbb H}$.
Moreover, it is easy to verify that no two entries of $\mathbb H$ is
conjugate in $G$.  The claim is thus proved.
\end{proof}

If there exists a parabolic subgroup $P \in \mathbb P$ such that $P$
is a hyperbolic group, then we may add the trivial subgroup into
$\mathbb H$. Then by the choice of $\mathbb H$, we have that $P$ is
parabolically extended for $\GH$.
\end{proof}

In view of Corollary \ref{structure}, Theorem \ref{mainthm2} gives
the following corollary, concerning about ``universal'' relatively
quasiconvex subgroups in certain classes of relatively hyperbolic
groups.

\begin{cor}
If $G$ acts geometrically finitely on $\pGf$ and $\GP$ is relatively
hyperbolic. Then relatively quasiconvex subgroups of $G$ with
respect to $G \curvearrowright \pGf$ are relatively quasiconvex with
respect to $\GP$.
\end{cor}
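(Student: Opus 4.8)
The plan is to recognize this statement as an immediate consequence of the machinery already assembled, with Corollary \ref{structure} serving as the bridge to Theorem \ref{mainthm2}. The key observation is that the hypothesis ``$\Gamma$ is relatively quasiconvex with respect to $G \curvearrowright \pGf$'' should be read, via Theorems \ref{GFRH} and \ref{GFQC}, as relative quasiconvexity with respect to the canonical peripheral structure $\mathbb H$ induced by the geometrically finite action on the Floyd boundary.

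First I would fix $\mathbb H$ to be the collection of representatives of conjugacy classes of maximal parabolic subgroups for $G \curvearrowright \pGf$, adjoining a trivial subgroup if needed. Since $G$ acts geometrically finitely on $\pGf$, Theorem \ref{GFRH} guarantees that $\GH$ is relatively hyperbolic and that $\pGf$ is $G$-equivariantly homeomorphic to the Bowditch boundary $\TGH$. In particular, by Theorem \ref{GFQC}, a subgroup $\Gamma$ is relatively quasiconvex with respect to $G \curvearrowright \pGf$ precisely when it is relatively quasiconvex with respect to $\mathbb H$.

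Next I would invoke Corollary \ref{structure}: because $G$ acts geometrically finitely on $\pGf$ and $\GP$ is relatively hyperbolic, the structure $\mathbb P$ is parabolically extended for this very $\GH$. This places us squarely within the hypotheses of Theorem \ref{mainthm2}. Applying its first half (Proposition \ref{dynconv1}) to the subgroup $\Gamma$, which is relatively quasiconvex with respect to $\mathbb H$, yields that $\Gamma$ is relatively quasiconvex with respect to $\mathbb P$, which is exactly the desired conclusion.

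Since every step is a direct citation of an earlier result, there is no genuine analytic obstacle here beyond bookkeeping; the only point requiring real care is to verify that the peripheral structure $\mathbb H$ produced by Corollary \ref{structure} coincides with the one against which the quasiconvexity hypothesis is phrased, so that Theorem \ref{mainthm2} applies with no mismatch of peripheral structures. Once that identification is made explicit, the corollary follows at once.
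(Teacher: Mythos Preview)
Your proposal is correct and follows essentially the same route as the paper: invoke Corollary~\ref{structure} to see that $\mathbb P$ is parabolically extended for the canonical structure $\mathbb H$ coming from $G \curvearrowright \pGf$, then apply the first half of Theorem~\ref{mainthm2} (Proposition~\ref{dynconv1}). Your only addition is making explicit the identification, via Theorems~\ref{GFRH} and~\ref{GFQC}, of quasiconvexity with respect to the action $G \curvearrowright \pGf$ and quasiconvexity with respect to $\mathbb H$, which the paper leaves implicit.
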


Moverover, the following conjecture is made by Olshanskii-Osin-Sapir on the
relationship between relatively hyperbolic groups and their Floyd boundaries.
\begin{conjA}\label{conjA}\cite{OOS}
If a finitely generated group has non-trivial Floyd boundary, then
it is hyperbolic relative to a collection of proper subgroups.
\end{conjA}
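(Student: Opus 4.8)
The plan is to exhibit a geometrically finite convergence action, since by Theorem \ref{GFRH} hyperbolicity relative to a collection of proper subgroups is equivalent to the existence of a geometrically finite (equivalently, $2$-cocompact) convergence action on a compact space containing at least three points. Because $\pGf$ is assumed nontrivial, it is uncountable and, by Karlsson's theorem \cite{Ka}, $G$ already acts on $\pGf$ as a convergence group action. The first step is therefore to analyze this action, isolate its non-conical limit points, and take the stabilizers of these points as the candidate collection of peripheral subgroups.

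First I would partition the limit set of $G \curvearrowright \pGf$ into conical points and the rest, letting $\mathbb H$ consist of representatives of the stabilizers $G_p$ of the non-conical points, and then attempt to invoke the implication $(2)\Rightarrow(1)$ of Theorem \ref{GFRH}. The immediate difficulty is that Proposition \ref{Dunwoody} shows $G \curvearrowright \pGf$ need \emph{not} be geometrically finite, so the non-conical points of the Floyd action itself cannot be expected to be bounded parabolic. The correct target is instead a $G$-equivariant quotient $\phi : \pGf \to M$ collapsing each limit set $\LF(G_p)$ to a point, exactly as in Gerasimov's Floyd-map construction \cite{Ge2}; geometric finiteness should be sought for $G \curvearrowright M$ rather than for $G \curvearrowright \pGf$, with $M$ serving as the Bowditch boundary $\TGH$ of the sought peripheral structure.

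An alternative route uses the equivalence of Conjecture A with Conjecture B recorded in Proposition \ref{conjecture}. Under that reduction it suffices to treat a group hyperbolic relative to a collection of NRH proper subgroups and show it acts geometrically finitely on its Floyd boundary. By Theorem \ref{GFFloyd} this reduces further to verifying that each NRH peripheral subgroup $H$ acts geometrically finitely on its limit set $\LF(H)$. Since an NRH subgroup has trivial Floyd boundary, one expects $\LF(H)$ to be a single point and $H$ to be parabolic, so the remaining task is to check that this parabolic point is \emph{bounded} and that the local pieces assemble, via the distance-decreasing Floyd map and the $2$-cocompactness argument of Proposition \ref{dynconv2}, into a single geometrically finite action.

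The hard part will be precisely the step that is absent from the literature: controlling the non-conical dynamics. Even with the quotient $M$ in hand, proving that every non-conical point is a \emph{bounded} parabolic point, and that there are only finitely many $G$-orbits of them, is exactly what separates an arbitrary convergence action from a geometrically finite one, and the inaccessible-type behavior exhibited in Proposition \ref{Dunwoody} signals the genuine difficulty here. In honesty Conjecture A is open: any complete argument must extract, from the bare nontriviality of $\pGf$, enough uniform geometric control to rule out inaccessible phenomena, and the strategy above reduces the problem to this single obstacle without resolving it.
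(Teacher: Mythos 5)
This statement is Conjecture A of Olshanskii--Osin--Sapir, quoted from \cite{OOS}: the paper does not prove it, and presents it explicitly as an open problem. The paper's only results concerning it are Proposition \ref{conjecture} (the equivalence with Conjecture B), which rests on Theorem \ref{mainthm4} and \cite[Theorem C]{GePo3}, together with the partial converse recorded after Conjecture B (via Corollary \ref{structure}) and the cautionary example of Proposition \ref{Dunwoody}. Your proposal reaches the correct conclusion: it does not claim a proof, and your closing assessment --- that any argument must extract uniform control over the non-conical limit dynamics from the bare nontriviality of $\pGf$, and that this is precisely what no existing technique supplies --- is an accurate description of the state of the problem and is consistent with everything the paper establishes, including the observation that Proposition \ref{Dunwoody} rules out proving geometric finiteness of the Floyd action itself.

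One caution about your second route. The step ``since an NRH subgroup has trivial Floyd boundary, one expects $\LF(H)$ to be a single point'' is not an available lemma: the assertion that NRH groups have trivial Floyd boundary is precisely the contrapositive of Conjecture A itself (modulo \cite[Theorem C]{GePo3}, which identifies $\LF(H)$ with the Floyd boundary of $H$). Using it inside a proof of Conjecture B, which you would then feed back through Proposition \ref{conjecture} to deduce Conjecture A, is circular. The paper's Proposition \ref{conjecture} is careful on exactly this point: it establishes the two implications only as conditional statements, assuming one conjecture in full to derive the other, and never invokes either unconditionally. Note also that the cases where Conjecture B is known (items (2) and (3) in the introduction's list) are exactly those where triviality of the peripheral Floyd boundaries is verified by independent means, e.g.\ Proposition 4.28 of \cite{OOS} for unconstricted groups, rather than deduced from non-relative-hyperbolicity.
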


In \cite{BDM}, Behrstock-Drutu-Mosher studied Dunwoody's
inaccessible group $J$ which is constructed in \cite{Dun}. In
particular, they proved that there exists no collection $\mathbb P$
of NRH proper subgroups such that $J$ is hyperbolic relative to
$\mathbb P$. Moreover, we have the following observation.

\begin{prop} \label{Dunwoody}
Dunwoody's group $J$ in \cite{Dun} does not act geometrically
finitely on its Floyd boundary.
\end{prop}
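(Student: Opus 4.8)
The plan is to argue by contradiction: assume that $J$ acts geometrically finitely on its Floyd boundary $\partial_f J$ and deduce that $J$ is hyperbolic relative to a finite collection of \emph{NRH} proper subgroups, which is exactly what Behrstock-Drutu-Mosher \cite{BDM} forbid. First I would record that a geometrically finite action presupposes a genuine, non-elementary convergence action, so that $\partial_f J$ is nontrivial and $J$ acts on it as a convergence group by Karlsson's theorem \cite{Ka}. Applying the implication $(2)\Rightarrow(1)$ of Theorem \ref{GFRH} to $J\curvearrowright\partial_f J$ yields a finite collection $\mathbb P$ of representatives of the conjugacy classes of maximal parabolic subgroups such that $(J,\mathbb P)$ is relatively hyperbolic. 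Each $P\in\mathbb P$ is infinite; and since $\LF(J)=\partial_f J$ is nontrivial, $J$ is not itself parabolic, so the parabolic-point stabilizer $P$ must be a proper subgroup of $J$.

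The heart of the matter is to show that each $P\in\mathbb P$ is NRH. By the definition of a parabolic subgroup for the action $J\curvearrowright\partial_f J$, the limit set $\LF(P)$ is a single point $p$, and $P$ acts cocompactly on $\partial_f J\setminus\{p\}$. By Lemma \ref{undistorted} the peripheral subgroup $P$ is undistorted in $J$, so its intrinsic word metric is comparable to the restriction of the word metric of $J$. The plan is to combine these two facts to conclude that the \emph{intrinsic} Floyd boundary $\partial_f P$ of $P$ is trivial: any two geodesic rays in a Cayley graph of $P$ accumulate, in the Floyd metric $\rho$ of $J$, only at the single point $p$, and undistortedness transfers this collapse to the intrinsic Floyd metric of $P$. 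Once $\partial_f P$ is known to be trivial, the contrapositive of Gerasimov's theorem \cite{Ge1} (the converse of Conjecture A, recorded in the remark following it: relative hyperbolicity with respect to proper subgroups forces a nontrivial Floyd boundary) shows that $P$ is hyperbolic relative to no collection of proper subgroups, i.e.\ $P$ is NRH.

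Granting this, $J$ is hyperbolic relative to the finite collection $\mathbb P$ of NRH proper subgroups, contradicting \cite[Proposition 6.3]{BDM}, which asserts that no such collection exists. Hence $J$ does not act geometrically finitely on $\partial_f J$. I note in passing that a naive appeal to Theorem \ref{GFFloyd} is uninformative here, since the limit set $\LF(P)$ of a Floyd-parabolic is a single point and ``acting geometrically finitely on one point'' carries no content; this is precisely why one is forced to pass to the intrinsic Floyd boundary of $P$.

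I expect the main obstacle to be exactly the middle step, namely upgrading ``$\LF(P)$ is a single point'' to ``$\partial_f P$ is trivial.'' The delicacy is that the Floyd rescaling $f(n)=\alpha^n$ is exponentially sensitive, so the bi-Lipschitz comparison of word metrics provided by undistortedness does not transport Floyd data verbatim; one must verify that the collapse of the ambient Floyd metric along $P$ survives the change of Floyd parameter $\alpha$ needed to compare it with the intrinsic Floyd metric of $P$. This is the Floyd-functoriality phenomenon for undistorted subgroups in the work of Gerasimov-Potyagailo \cite{GePo1}, whose results (together with the distance-decreasing property of Floyd maps) I would invoke to make the comparison rigorous. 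A secondary technical point to dispatch is the finite generation of each $P$, so that $\partial_f P$ is defined at all; this should follow from the finiteness of $\mathbb P$ and the $2$-cocompactness furnished by Theorem \ref{GFRH}.
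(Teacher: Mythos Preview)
Your strategy is sound and, in fact, coincides with the alternative argument the paper sketches in the remark immediately following its proof. The step you flag as the obstacle---passing from ``$\LF(P)$ is a single point'' to ``$\partial_f P$ is trivial''---is exactly \cite[Theorem~C]{GePo3} (not \cite{GePo1}): for a suitable Floyd function, the limit set $\LF(H)$ of a peripheral subgroup in the ambient Floyd boundary is homeomorphic to the intrinsic Floyd boundary $\partial_f H$. With that in hand, your contradiction with \cite[Proposition~6.3]{BDM} goes through; finite generation of each $P$ is automatic from Lemma~\ref{undistorted}.

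The paper's \emph{main} proof takes a genuinely different route that sidesteps this Floyd-functoriality issue altogether. Rather than proving every $P\in\mathbb P$ is NRH, it uses \cite[Proposition~6.3]{BDM} in the forward direction: some $\Gamma\in\mathbb P$ \emph{is} hyperbolic relative to a collection $\mathbb K$ of proper subgroups. Then \cite[Corollary~1.14]{DruSapir} refines the peripheral structure to $\mathbb H=\mathbb K\cup(\mathbb P\setminus\{\Gamma\})$, so $(J,\mathbb H)$ is relatively hyperbolic and $\mathbb P$ is parabolically extended for it. The Floyd map now factors as $\partial_f J=T_{\mathbb P}\to T_{\mathbb H}$, and by Theorem~\ref{floydker} the single point $\LF(\Gamma)=\LP(\Gamma)$ must surject onto $\LH(\Gamma)$, which contains at least the distinct parabolic points $\LH(K_j)$---a contradiction. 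This argument stays entirely on the Bowditch-boundary side and never needs to compare ambient and intrinsic Floyd completions; the price is an extra appeal to \cite{DruSapir}. Your approach is arguably cleaner once one is willing to quote \cite{GePo3}.
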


\begin{proof}
By way of contradiction, we suppose $J
\curvearrowright
\partial_f J$ is geometrically finite. Let $\mathbb P$ be a set of
representatives of the conjugacy classes of maximal parabolic
subgroups with respect to $J \curvearrowright \partial_f J$. Then
the Floyd boundary $\partial_f J$ is same as the Bowditch boundary
$\TGP$. Moreover, the limit set $\LF(P)$ of each $P \in \mathbb P$
consists of only one point.

By Proposition 6.3 in Behrstock-Drutu-Mosher \cite{BDM}, there
exists a subgroup $\Gamma \in \mathbb P$ such that $\Gamma$ is
hyperbolic relative to a collection of proper subgroups $\mathbb K =
\Kl$. By Corollary 1.14 in \cite{DruSapir}, we have that $J$ is
hyperbolic relative to $\mathbb H := \mathbb K \cup (\mathbb P
\setminus \{\Gamma\})$.

By Theorem \ref{floydmap}, we have a $G$-equivalent Floyd map
$\varphi: \TGP = \partial_f J \to \TGH$. Note that
$\Lambda_f(\Gamma)$ consists of one point. Using Theorem
\ref{floydker}, we will obtain that $\varphi$ maps
$\Lambda_f(\Gamma)$ to different points $\LH(K_j)$. This gives a
contradiction. Hence, the action $J \curvearrowright \partial_f J$
is not geometrically finite
\end{proof}

\begin{rem}
Note that a more direct proof (without using \cite[Corollary 1.14
]{DruSapir}) follows from \cite[Theorem C]{GePo3} and Theorem
\ref{floydmap}. A consequence of \cite[Theorem C]{GePo3} says that
if $\GH$ is relatively hyperbolic, then there is a particular Floyd
function $f$ such that for each $H \in \mathbb H$, the limit set
$\LF(H)$ is homeomorphic to its Floyd boundary $\partial_f H$. On
the other hand, Theorem \ref{floydmap} implies that a non-elementary
relatively hyperbolic group has a non-trivial Floyd boundary. So if
$J$ acts geometrically finitely on its Floyd boundary, then Floyd
boundary of every $P \in \mathbb P$ consists of one point. As above,
by \cite[Proposition 6.3]{BDM}, there exists $\Gamma \in \mathbb P$
acting non-trivially on a compactum, which contradicts to Theorem
\ref{floydmap}.
\end{rem}

Recall that a group $H$ is said \textit{Non-Relatively Hyperbolic}
(NRH) if $H$ is not hyperbolic relative to any collection of proper
subgroups. As suggested by Theorem \ref{Dunwoody}, it seems reasonable to
conjecture the following.
\begin{conjB}\label{conjB}
If a finitely generated group is hyperbolic relative to a collection
of NRH proper subgroups, then it acts geometrically finitely on its
Floyd boundary.
\end{conjB}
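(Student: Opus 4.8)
The plan is to reduce Conjecture B to the main structural result of the paper, Theorem \ref{GFFloyd} (Theorem \ref{mainthm4}), and thereby isolate the point at which the difficulty genuinely concentrates. Let $G$ be finitely generated and hyperbolic relative to a collection $\mathbb H = \Hl$ of NRH proper subgroups. By Theorem \ref{GFFloyd}, the action $G \curvearrowright \pGf$ is geometrically finite if and only if each $H \in \mathbb H$ acts geometrically finitely on its limit set $\LF(H)$. Hence it suffices to prove that every NRH peripheral subgroup acts geometrically finitely on its limit set in $\pGf$, which turns a question about $G$ into a question about each $H$ separately.

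First I would identify $\LF(H)$ dynamically. By the result of Gerasimov--Potyagailo recalled in the remark following Proposition \ref{Dunwoody} (\cite[Theorem C]{GePo3}), for a suitable Floyd function $f$ the limit set $\LF(H)$ is $H$-equivariantly homeomorphic to the Floyd boundary $\partial_f H$ of $H$ itself. Thus the action $H \curvearrowright \LF(H)$ is conjugate to $H \curvearrowright \partial_f H$, and the problem becomes intrinsic to $H$: I must show that an NRH group acts geometrically finitely on its own Floyd boundary.

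The decisive step is to prove that an NRH group has \emph{trivial} Floyd boundary. If $\partial_f H$ has at most two points, then $H \curvearrowright \LF(H)$ is automatically geometrically finite (in the one-point case the limit point is fixed by all of $H$ and hence a bounded parabolic point), and feeding this back into Theorem \ref{GFFloyd} yields geometric finiteness of $G \curvearrowright \pGf$. Conversely, if some $\partial_f H$ were nontrivial, then by Karlsson's theorem \cite{Ka} the group $H$ would act on $\partial_f H$ as a nonelementary convergence group, and one would want to extract from this dynamics a genuine relatively hyperbolic structure on $H$ with proper peripheral subgroups, contradicting that $H$ is NRH.

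This last implication---nontrivial Floyd boundary forces relative hyperbolicity with proper peripheral subgroups---is exactly Conjecture A, and it is the main obstacle: the reduction above establishes only that Conjecture A implies Conjecture B, recovering the equivalence asserted in the introduction rather than an unconditional proof. To settle Conjecture B (equivalently A) outright one would have to produce, directly from a convergence action on a nontrivial Floyd boundary, a decomposition of the limit set into conical and bounded parabolic points that certifies geometric finiteness. Carrying this out uniformly---in particular for inaccessible examples in the spirit of Dunwoody's group (Proposition \ref{Dunwoody})---is precisely where the difficulty lies, and no purely Floyd-theoretic argument available within the present framework appears to circumvent it.
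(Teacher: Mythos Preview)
Your analysis is accurate and matches the paper's treatment. Conjecture~B is genuinely open in the paper; the only related argument is Proposition~\ref{conjecture}, which proves the equivalence of Conjectures~A and~B. Your reduction---via Theorem~\ref{GFFloyd} to the assertion that each NRH peripheral $H$ acts geometrically finitely on $\LF(H)$, followed by the identification $\LF(H)\cong\partial_f H$ from \cite[Theorem~C]{GePo3}, and then the observation that a nontrivial $\partial_f H$ would (under Conjecture~A) force $H$ to be relatively hyperbolic, contradicting NRH---is precisely the paper's proof of the implication ``Conjecture~A $\Rightarrow$ Conjecture~B'' in Proposition~\ref{conjecture}. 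You also correctly identify that no unconditional proof is obtained, and that the obstruction is exactly Conjecture~A.
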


As a matter of fact, the converse of Conjecture B is true by
Corollary \ref{structure}.

Although Conjectures A and B appear to be different claims, they
turn out to be equivalent by the following simple arguments.
\begin{prop}\label{conjecture}
Conjecture A is equivalent to Conjecture B.
\end{prop}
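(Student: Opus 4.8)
The plan is to establish the equivalence by proving each implication separately, exploiting the fact that both conjectures concern the relationship between nontrivial Floyd boundaries and relative hyperbolicity, but phrased from complementary angles. The key structural observation is that a finitely generated group $G$ with nontrivial Floyd boundary $\pGf$ acts on it as a convergence group (by Karlsson's result recalled earlier), and one can iterate a peripheral decomposition until the peripheral subgroups are themselves NRH.

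For the direction \emph{Conjecture A implies Conjecture B}, suppose $G$ is hyperbolic relative to a collection $\mathbb H$ of NRH proper subgroups. I would first invoke Theorem \ref{floydmap} to observe that a non-elementary relatively hyperbolic group has nontrivial Floyd boundary, so $\pGf$ is nontrivial. Then by the converse of Conjecture A (which holds unconditionally by Gerasimov's theorem, as noted in the remark following Conjecture A), each peripheral $H \in \mathbb H$ either has trivial Floyd boundary or is itself relatively hyperbolic. Since each $H$ is NRH, it \emph{cannot} be hyperbolic relative to any proper subcollection; applying Conjecture A to $H$ forces $\partial_f H$ to be trivial. By Theorem \ref{GFFloyd} (Theorem \ref{mainthm4}), $G$ acts geometrically finitely on $\pGf$ if and only if each $H$ acts geometrically finitely on its limit set $\LF(H)$; since each such limit set reduces to a single point (as the Floyd boundary of $H$ is trivial, using the homeomorphism between $\LF(H)$ and $\partial_f H$ from \cite{GePo3}), each $H$ acts geometrically finitely, and hence so does $G$.

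For the converse direction \emph{Conjecture B implies Conjecture A}, suppose $G$ is finitely generated with nontrivial Floyd boundary. The strategy is to build a peripheral structure of NRH subgroups by a descent argument. Since $\pGf$ is nontrivial, $G$ acts on it as a convergence group. If $G$ is already NRH, there is nothing to extract; otherwise $G$ is hyperbolic relative to some proper collection, and I would want to repeatedly refine the peripheral subgroups—replacing any peripheral subgroup that is still relatively hyperbolic by its own peripheral structure, using a Drutu-Sapir-type transitivity result (Corollary 1.14 in \cite{DruSapir})—until all peripheral subgroups are NRH. The main obstacle will be controlling this refinement process: one must ensure it terminates (or at least stabilizes to a collection of NRH subgroups) and that the resulting subgroups are \emph{proper}. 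Once an NRH peripheral structure $\mathbb P$ is obtained, Conjecture B guarantees $G$ acts geometrically finitely on its Floyd boundary, so the Floyd boundary coincides with the Bowditch boundary $\TGP$, and $G$ is indeed hyperbolic relative to the collection $\mathbb P$ of proper subgroups, establishing Conjecture A.

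The hard part will be the descent argument in the second implication: accessibility-type phenomena (precisely the failure of which Dunwoody's group exhibits) could in principle obstruct termination. The likely resolution is that the descent is only needed \emph{given} that $G$ already has nontrivial Floyd boundary, and the obstruction to relative hyperbolicity being witnessed at each stage is exactly what one assumes away; the simple arguments promised in the statement presumably reduce the whole matter to a single application of each conjecture together with Theorem \ref{GFFloyd} and Theorem \ref{floydmap}, rather than an unbounded iteration.
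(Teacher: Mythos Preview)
Your direction ``Conjecture A implies Conjecture B'' is correct and matches the paper's argument (the paper phrases it as a contrapositive, but the content is identical: NRH peripherals have trivial Floyd boundary by the contrapositive of Conjecture A, hence act geometrically finitely on their limit sets, and Theorem \ref{mainthm4} finishes).

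Your direction ``Conjecture B implies Conjecture A'' has a genuine gap. The descent argument you sketch is circular: to apply Conjecture B you need $G$ to \emph{already} be hyperbolic relative to some collection of proper subgroups, but that is exactly the conclusion of Conjecture A that you are trying to establish. In particular, when you write ``If $G$ is already NRH, there is nothing to extract,'' you have not handled the case at all---an NRH group with nontrivial Floyd boundary is precisely a counterexample to Conjecture A, and your descent never gets started. You correctly sense something is wrong in your final paragraph, but the resolution is not a cleaner descent.

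The paper's trick avoids this entirely by working contrapositively and \emph{embedding upward} rather than descending. Suppose Conjecture A fails, so there is an NRH group $\Gamma$ with nontrivial Floyd boundary. Form $G = \Gamma * F_2$; this free product \emph{is} hyperbolic relative to $\{\Gamma\}$, and $\Gamma$ is NRH, so Conjecture B applies to $G$ and gives that $G$ acts geometrically finitely on $\pGf$. Then Theorem \ref{mainthm4} forces $\Gamma$ to act geometrically finitely on $\LF(\Gamma)$, which by \cite[Theorem C]{GePo3} is homeomorphic to $\partial_f \Gamma$ and hence nontrivial. So $\Gamma$ is relatively hyperbolic, contradicting NRH. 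The free-product step is the missing idea: it manufactures a relatively hyperbolic ambient group in which the problematic $\Gamma$ sits as a peripheral, so that Conjecture B becomes applicable.
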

\begin{proof}
\textbf{Conjecture A implies Conjecture B}: Suppose Conjecture B is
false. Then there exists a relatively hyperbolic group $G$ with
respect to a collection $\mathbb H$ of NRH proper subgroups such
that $G$ does not act geometrically finitely on its Floyd boundary.
Then by Theorem \ref{mainthm4}, there is a parabolic subgroup $H \in
\mathbb H$ such that the limit set $\LF(H)$ is nontrivial and the
action $H \curvearrowright \LF(H)$ is not geometrically finite. By
Theorem C in \cite{GePo3}, $\LF(H)$ is homeomorphic to the Floyd
boundary of $H$. Therefore, this contradicts to Conjecture A.

\textbf{Conjecture B implies Conjecture A}: Suppose, to the
contrary, that there exists a NRH group $\Gamma$ with non-trivial
Floyd boundary. Then we take a free product $G = \Gamma
* F_2$, where $F_2$ is a free group of rank 2. By Conjecture
A, $G$ acts geometrically finitely on $\pGf$. By Theorem
\ref{mainthm4}, we have that $\Gamma$ also acts geometrically
finitely on $\LF(\Gamma)$. Using again Theorem C in \cite{GePo3}, we
obtain that $\Gamma$ acts geometrically finitely on its non-trivial
Floyd boundary. This contradicts to the hypothesis that $\Gamma$ is
a non-relatively hyperbolic group.
\end{proof}



\bibliographystyle{amsplain}

\begin{thebibliography}{10}
\bibitem{AAS} J.W. Anderson, J. Aramayona and K.J. Shackleton, \textit{An obstruction to the strong relative hyperbolicity of
a group}, J. Group Theory 10 (2007), no. 6, 749-756.

\bibitem{BDM} J. Behrstock, C. Drutu, L. Mosher, \textit{Thick metric spaces, relative hyperbolicity, and quasi-isometric rigidity}, Math. Annalen, 344, 2009, 543-595.

\bibitem{Bow1} B. Bowditch, \textit{Relatively hyperbolic groups}, Preprint, Univ. of Southampton,1999.

\bibitem{Bow2} B. Bowditch, \textit{Convergence groups and configuration spaces}, in "Group Theory Down Under" (J. Cossey, C.F. Miller, W.D. Neumann, M. Shapiro, eds.), de Gruyter (1999), 23-54.

\bibitem{Dah2} F. Dahmani. \textit{Accidental parabolics and relatively hyperbolic groups}. Israel Journal of Mathematics, 153(1): 93-127, 2006.

\bibitem{DruSapir} C. Drutu and M. Sapir. \textit{Tree-graded spaces and asymptotic cones of groups}. With an appendix by D. Osin and M. Sapir. Topology, 44(5): 959-1058, 2005.

\bibitem{Dun}  M. Dunwoody. \textit{An inaccessible group}. In Geometric group theory, Vol. 1 (Sussex, 1991), volume 181 of London Math. Soc. Lecture Note Ser., pages 75-78. Cambridge Univ. Press, Cambridge, 1993.

\bibitem{Farb} B. Farb, \textit{Relatively hyperbolic groups}, Geom. Funct. Anal., 8(5) (1998), 810-840.

\bibitem{Floyd} W. Floyd, \textit{Group completions and limit sets of Kleinian groups}, Inventiones Math. 57 (1980), 205-218.

\bibitem{Ge1} V. Gerasimov, \textit{Expansive convergence groups are relatively hyperbolic}, Geom. Funct. Anal. (19):137-169, 2009

\bibitem{Ge2} V. Gerasimov, \textit{Floyd maps to the boundaries of relatively hyperbolic groups}, arXiv:1001.4482.

\bibitem{GePo1} V. Gerasimov and L. Potyagailo, \textit{Dynamical quasiconvexity in relatively hyperbolic groups} preprint 2009.

\bibitem{GePo2} V. Gerasimov and L. Potyagailo, \textit{Quasi-isometries and Floyd boundaries of relatively hyperbolic groups}. arXiv:0908.0705

\bibitem{GePo3} V. Gerasimov and L. Potyagailo, \textit{Horospherical geometry of relatively hyperbolic groups}.  arXiv:1008.3470

\bibitem{GePo4} V. Gerasimov and L. Potyagailo, \textit{Quasiconvexity in the relatively hyperbolic groups}.  arXiv:1103.1211

\bibitem{Ger} S. Gersten, \textit{Subgroups of small cancellation groups in dimension 2}, J. London Math. Soc. , 54 (1996), 261-283.

\bibitem{GH}  E. Ghys, P. de la Harpe, Eds., \textit{Sur les groupes hyperboliques d'apr$\grave{e}$s Mikhael Gromov, Progress in Math., 83, Birka$\ddot{u}$ser}, 1990.

\bibitem{Gro} M. Gromov, \textit{Hyperbolic groups}, from: Essays in group theory (S Gersten, editor), Springer, New York (1987),75-263.

\bibitem{Hru} G. Hruska, \textit{Relative hyperbolicity and relative quasiconvexity for countable groups}, Algebr. Geom. Topol. \textbf{10} (2010) 1807--1856.

\bibitem{HrWi} G. Hruska and D. Wise, \textit{Packing subgroups in relatively hyperbolic groups},  Geom. Topol.  13(4)  (2009),1945-1988.

\bibitem{Ka} A. Karlsson, \textit{Free subgroups of groups with non-trivial Floyd boundary}, Comm. Algebra, 31, (2003), 5361-5376.

\bibitem{MarWise} E. Martinez-Pedroza and D. Wise, \textit{Relative Quasiconvexity using Fine Hyperbolic Graphs}, arXiv:1009.3532

\bibitem{MarPed} E. Martinez-Pedroza, \textit{On Quasiconvexity and Relatively Hyperbolic Structures on Groups}, arXiv:0811.2384

\bibitem{MarPed2} E. Martinez-Pedroza, \textit{Combination of Quasiconvex Subgroups of Relatively Hyperbolic Groups}. Groups, Geometry, and Dynamics, 3 (2009), 317-342.


\bibitem{MMJ} Mahan MJ, \textit{Relative Rigidity, Quasiconvexity and C-Complexes}, arXiv:0704.1922

\bibitem{Ol} A. Olshanskii, \textit{Periodic quotients of hyperbolic groups}, Mat. Sbornik 182 (1991), 4, 543-567 (in Russian), English translation in Math. USSR Sbornik 72 (1992), 2, 519-541.

\bibitem{OOS} A. Olshanskii, D. Osin and M. Sapir, \textit{Lacunary hyperbolic groups.} With an appendix by Michael Kapovich and Bruce Kleiner. Geom. Topol. \textbf{13}  (2009),  no. 4, 2051--2140.

\bibitem{Osin} D. Osin, \textit{Relatively hyperbolic groups: intrinsic geometry, algebraic properties, and algorithmic problems}, Mem. Amer. Math. Soc., 179(843) 2006, 1-100.

\bibitem{Osin2} D. Osin. \textit{Elementary subgroups of relatively hyperbolic groups and bounded generation}, Internat. J. Algebra Comput., 16(1):99-118, 2006.

\bibitem{Tukia} P. Tukia, \textit{Conical limit points and uniform convergence groups}, J. Reine. Angew. Math. 501 (1998) 71-98.

\bibitem{Yaman} A. Yaman, \textit{A topological characterisation of relatively hyperbolic groups}, J. reine ang. Math. 566 (2004), 41-89.

\bibitem{Yang} W. Yang, \textit{Peripheral structures of relatively hyperbolic groups}, PhD Thesis, Universit\'e de Lille 1, 2011.

\end{thebibliography}

\end{document}